\documentclass[a4paper,UKenglish,cleveref, autoref, thm-restate,numberwithinsect]{lipics-v2021}

\hideLIPIcs 
\nolinenumbers

\title{The stochastic block model has the overlap graph property for modularity}

\usepackage{amsmath, tikz,amssymb,color,comment}
\usepackage{needspace}
\usepackage[normalem]{ulem}
\usepackage{pgfplots}
\pgfplotsset{compat=1.18} 

\newcommand{\pr}{{\mathbb P}}
\newcommand{\cA}{\mathcal{A}}
\newcommand{\cB}{\mathcal{B}}
\newcommand{\cD}{\mathcal{D}}
\newcommand{\cP}{\mathcal{P}}

\newcommand{\cX}{\mathcal{X}}
\newcommand{\q}{q^{\star}}

\newcommand{\eps}{\varepsilon}
\newcommand{\vol}{{\rm vol}}

\newcommand{\Fc}{
}

\author{Shankar Bhamidi}{Department of Statistics and Operations Research, University of North Carolina, Chapel Hill, USA.}{bhamidi@email.unc.edu}{0009-0000-3953-936X}{Bhamidi was partially supported by NSF DMS-2113662, DMS-2413928, DMS-2434559 and NSF RTG grant DMS-2134107.}

\author{David Gamarnik}{MIT Sloan School of Management.}{gamarnik@mit.edu}{0000-0001-8898-8778}{Gamarnik partially supported by NSF Grant CISE 2233897.}

\author{Remco van der Hofstad}{Department of Mathematics and Computer Science, Eindhoven University of Technology, 
The Netherlands.}{rhofstad@win.tue.nl}{0000-0003-1331-9697}{Partially supported by the Netherlands Organisation for Scientific Research (NWO) through the Gravitation NETWORKS grant 024.002.003.}

\author{Nelly Litvak}{Department of Mathematics and Computer Science, Eindhoven University of Technology, 
The Netherlands.}{n.v.litvak@tue.nl}{0000-0002-6750-3484}{Partially supported by the Netherlands Organisation for Scientific Research (NWO) through the Gravitation NETWORKS grant 024.002.003.}

\author{Pawe{\l} Pra{\l}at} {Department of Mathematics, Toronto Metropolitan University, 
Canada.} {pralat@torontomu.ca}{0000-0001-9176-8493}{Partially supported by NSERC Discovery Grant.}

\author{Fiona Skerman\footnote{Corresponding author.}}{Department of Mathematics, Uppsala University, Sweden.}{fiona.skerman@math.uu.se}{0000-0003-4141-7059}{Partially supported by the Wallenberg AI, Autonomous Systems and Software Program (WASP) funded by the Knut and Alice Wallenberg Foundation.}

\author{Yasmin Tousinejad}{Department of Mathematics, Uppsala University, 
Sweden.}{yasmin.tousinejad@math.uu.se}{0000-0002-6860-9985}{Partially supported by the Wallenberg AI, Autonomous Systems and Software Program (WASP) funded by the Knut and Alice Wallenberg Foundation.}

\authorrunning{S~Bhamidi, D~Gamarnik, R~vd~Hofstad, N~Litvak, P~Pra{\l}at, F~Skerman, Y~Tousinejad} 

\Copyright{Shankar Bhamidi, David Gamarnik, Remco van der Hofstad, Nelly Litvak, Pawel Pralat, Fiona Skerman, Yasmin Tousinejad}

\ccsdesc[500]{Theory of computation~Random network models}

\keywords{community detection, average-case complexity, overlap gap property, modularity, Louvain, stochastic block model} 

\category{} 

\relatedversion{} 

\funding{This material is based upon work supported by the National Science Foundation under Grant No. DMS-1928930, while Bhamidi, Gamarnik, van der Hofstad, Litvak, Pra{\l}at and Skerman were in residence at the Simons Laufer Mathematical Sciences Institute (formerly MSRI) in Berkeley, California, during the Spring 2025 semester.} 

\acknowledgements{We thank Subhabrata Sen and Ekaterina Toropova for useful discussions.}

\EventEditors{Sayan Bhattacharya, Danupon Nanongkai, Michael Benedikt, and Gabriele Puppis}
\EventNoEds{4}
\EventLongTitle{53rd International Colloquium on Automata, Languages, and Programming (ICALP 2026)}
\EventShortTitle{ICALP 2026}
\EventAcronym{ICALP}
\EventYear{2026}
\EventDate{July 7--10, 2026}
\EventLocation{Royal Holloway, University of London, Egham, United Kingdom}
\EventLogo{}
\SeriesVolume{374}
\ArticleNo{32}

\begin{document}
\maketitle

\begin{abstract}

    The overlap gap property (OGP) is a statement about the geometry of near-optimal solutions. Exhibiting OGP implies failure of a class of local algorithms; and has been observed to coincide with conjectured algorithmic limits in problems with statistical computational gap.

    We consider the Stochastic Block Model (SBM), where the graph has a planted partition with $k$ equal-size blocks which form the `communities', and where, for parameters $p>q$, vertices within the same community connect with probability $p$, while vertices in different communities connect with probability $q$, independently across pairs of vertices. Modularity--based clustering algorithms have become ubiquitous in applications. This article studies theoretical limits of local algorithms based on the modularity score on the SBM. 

    We establish that modularity exhibits OGP on the SBM. This rules out a class of local algorithms based on modularity for recovery in the SBM, and shows slow mixing time for a related Markov Chain.  Theoretically this is one of the few instances where OGP has been established for a `planted' model, as most such analyses to date consider the `null' model.

    As part of our analysis, we extend a result by Bickel and Chen 2009, who established that with high probability, the modularity optimal partition of SBM is~$o(n)$ local moves away from the planted partition, where $n$ is the graph size. We show that, with high probability, any partition with modularity score sufficiently near the optimal value is close to the planted partition. 
\end{abstract}

\section{Introduction}

Modularity maximisation \cite{newman2006modularity,NewmanGirvan} is one of the main methods used for finding clusters and communities in networks. Modularity maximisation is NP-hard~\cite{brandes2007modularity}; in practice, modularity has a complex landscape, and there are many high scoring solutions~\cite{good2010performance}. On the other hand, one major advantage of modularity is that its local updates are easy to compute. Naturally, local updates form the initial phase in the go-to algorithms for modularity maximisation: the Louvain algorithm~\cite{blondel2008fast} and its improved version, the Leiden algorithm~\cite{traag2019louvain}. However, as already observed in~\cite{blondel2008fast}, algorithms based on local updates get trapped in a local maximum. To avoid this, both Louvain and Leiden include an additional non-local phase. An alternative approach, proposed by Wang and Kolter~\cite{wang2020community} via the Locale algorithm, escapes local maxima by replacing the local update with low-cardinality embedding.  

Given the abundance of applications of modularity maximisation and Louvain and Leiden algorithms, theoretical understanding of modularity--based algorithms is surprisingly scarce. Cohen-Addad et al.~\cite{cohen2020power} proved that in the Stochastic Block Model (SBM) with sufficient signal and two equal-sized blocks, a local algorithm, very similar to the local phase of Louvain, initiated on a random bisection, recovers the correct partition with high probability. See Section~\ref{sec.conj} for further discussion including how this relates to our result.  

The question remains whether local updates lead to a global optimum in this random model with more than two communities or different initialisation. In this work, we answer this question negatively. Specifically, we establish that in SBM with $k \ge 3$ blocks,  modularity exhibits the {\it overlap gap property} (OGP). 
The OGP is a statement of the geometry of near-optimal solutions and is considered a signature of algorithmic hardness---see Definition~\ref{def:OGP}. 
Indeed, for many problems known to exhibit a computational-statistical gap, the threshold for OGP coincides with the conjectured algorithmic threshold~\cite{gamarnikAMSNotices,gamarnik2022disordered}. The idea has origins in statistical physics, and can be used to prove failure of greedy and Markov Chain Monte Carlo algorithms in average case problems. We show OGP in a planted random model as in~\cite{gamarnik2022sparse,gamarnik2021overlap}, though the theory is usually applied to optimisation problems in null models~\cite{gamarnikAMSNotices}.

The practical implication of this result is that in networks with more than two communities, the local updates of Louvain or Leiden can get stuck in a local maximum even for a random graph model with clear communities. The same holds for other algorithms based on local updates. For example, G\"osgens et al.~\cite{gacksgens2023hyperspherical} offer a new interpretation of modularity as an angular distance in a high-dimensional hypersphere, thus establishing equivalence of modularity maximisation and nearest-neighbour search. Then, OGP implies that greedy algorithms for nearest-neighbour search may return a {\em local} rather than the {\em global} maximum. Furthermore, using the OGP we prove that a natural generalisation of the greedy algorithm, namely the Markov Chain Monte Carlo algorithm, mixes exponentially slowly, and, in particular, takes an exponential time to reach partitions close to the planted partition with high probability.

While there are few theoretical results for modularity--based algorithms, there are positive results for recovering ground truth partitions using local-updates based on modularity in the SBM. As mentioned earlier, Cohen-Addad et al.~\cite{cohen2020power} showed that for the SBM with sufficient signal and two equal-sized blocks, a local algorithm recovers the communities with high probability. This was further extended to general $k$ by giving  an algorithm with parallel local updates based on the modularity function~\cite{cohen2022massively}. For more details and a {\Fc discussion of how these relate to our OGP results}, see Section~\ref{sec.conj}.

More is known about the behaviour of modularity on random models; we briefly review this for context.
Considering first random graph models without planted structure, the main results focus on the setting of growing average degree $\bar{d}$ for the random $d$-regular graph~\cite{lichev2022modularity,mcdiarmid2018modularity,prokhorenkova2017modularity} and the Erd\H{o}s-R\'enyi random graph~\cite{mcdiarmid2020modularity,rybarczyk2025new}, which are likely to have modularity of the order $1/\sqrt{\bar{d}}$. For the Preferential Attachment (PA) model~\cite{barabasi1999emergence}, very recently, this same behaviour was established up to log factors~\cite{rybarczyk2025modularity}.
For models with planted structure, break-through results were given for the SBM by Bickel and Chen~\cite{bickel2009nonparametric} showing that the modularity-optimal partition is close to the planted partition. These were extended to the degree-corrected SBM in~\cite{zhao2012consistency}. See Theorem~\ref{thm.SBMrecovery} for details. Finally, the modularity in the Artificial Benchmark for Community Detection (ABCD) model~\cite{kaminski2021artificial}, a model similar to the well-known LFR model~\cite{lancichinetti2008benchmark} used by practitioners, was studied in~\cite{kaminski2022modularity}.

\section{Results}

\subsection{Definitions}

Let $G = (V,E)$ be a graph with $m = |E| \geq 1$ edges and $n = |V| \ge 1$ nodes. For a partition $\cA$ of the vertices of~$G$, the \emph{modularity score} of $\cA$ on $G$ is defined as
\begin{eqnarray}
q_\cA(G) = 
\frac{1}{2m}\sum_{A\in \cA} \sum_{u,v \in A}  
\left( {\mathbf 1}_{\{uv\in E\}} - \frac{d_u d_v}{2m} \right) 
= \sum_{A \in \cA} \frac{e(A)}{m} - \sum_{A\in \cA} \left( \frac {\vol (A)}{\vol(V)} \right)^2, \label{def.mod} 
\end{eqnarray}
where $d_u$ denotes the degree of node $u$, $e(A)$ denotes the number of edges within $A\subseteq V$ and $\vol(A) = \sum_{v \in A} d_v$ denotes the (degree) volume of the set $A$. The \emph{modularity} of a graph $G$ is  defined as $\q(G)=\max_\cA q_{\cA}(G)$, where the maximum is taken over all partitions $\cA$ of the nodes of~$G$. It will be useful to express modularity as the difference of the \emph{edge-contribution} or \emph{coverage} $q^E_\cA$ and the \emph{degree-tax} $q^D_\cA$ of the partition $\cA$, defined as
\begin{equation}\label{def.qEandqD} q^E_\cA(G)=\sum_{A\in\cA}e(A)/m \quad \mbox{and} \quad q^D_\cA(G)=\sum_{A\in\cA}\vol(A)^2/\vol(G)^2.\end{equation}
The modularity score, introduced by Newman and Girvan in~\cite{NewmanGirvan}, is a quality function of many popular community detection algorithms such as Louvain~\cite{blondel2008fast} or Leiden~\cite{traag2019louvain}. Indeed, the modularity score favours partitions of the set of nodes of a graph $G$ in which a large proportion of the edges fall entirely within the partition, but benchmarks it against the expected number of edges one would see in the same partition in a corresponding Chung-Lu random graph model~\cite{chung2006complex} with expected degree sequence taken to be identical to the degree sequence of the observed graph $G$. 

Fix $k\geq 2$ denoting the number of communities. We now formally define the  \emph{stochastic block model} (SBM) which will be the focus of this paper. This model is also called the \emph{planted partition model}. The set of nodes $[n]=\{1, \ldots, n\}$ is partitioned into $k$ communities $P_1, \ldots, P_k$ of as equal sizes as possible; that is, $\lfloor n/k \rfloor \le |P_i| \le \lceil n/k \rceil$ for all $i \in [k]$. For simplicity, often we will assume that $n$ is divisible by $k$, but our main result is stated and holds without this restriction. We will refer to this \emph{planted partition} $(P_i)_{i=1}^k$ as $\cP$. The probability of observing an edge between two nodes of the same community is equal to $p$; otherwise, it is equal to $q$. We will use the notation $G \in \mathcal{G}(n,k,p,q)$ whenever a random graph $G$ is generated with this probability distribution. 

Results in the paper are in the large network limit $n \rightarrow \infty$. In particular, we will assume that both $p=p(n)$ and $q=q(n)$ are functions of $n$, and $n$ is large enough for certain statements to be true. On the other hand, the parameter $k \ge 2$ is an arbitrary but {\em fixed} integer. We emphasise that the notations $o(\cdot)$ and $O(\cdot)$ refer to functions of $n$, not necessarily positive, whose growth vanishes, respectively, is bounded. 

We will establish that modularity exhibits OGP in the SBM. To prove this, we will obtain a more detailed result that characterises the maximum modularity of any partition with given distance to the planted partition. We define the distance as the classification error---see, for example,~(4.25) in~\cite{avrachenkov2022statistical}; in case of two communities, this distance is equal to the `imbalance' in~\cite{cohen2020power}. Define the distance between a $k$-part partition $\cA = (A_i)_{i=1}^k$ and the planted partition $\cP = (P_i)_{i=1}^k$ in SBM as
\begin{equation}\label{eq:dist_meas}
d(\cA, \cP) = 1 - \frac{1}{n} \max_\sigma \sum_{i=1}^k |A_{\sigma(i)} \cap P_i|, 
\end{equation}
where the maximum is taken over all permutations $\sigma: [k]\to[k]$ of $[k] = \{1, \ldots, k\}$ that govern how parts of $\cA$ are aligned with the ones of $\cP$. 
Note that this $d(\cA, \cP)$ has a natural interpretation: it is the minimum proportion of nodes that need to be re-shuffled to transform the candidate partition $\cA$ into the ground truth or planted partition $\cP$. Indeed, for a given permutation $\sigma$, one can keep nodes in $A_{\sigma(i)} \cap P_i$ where they are and move other nodes to the appropriate parts.

Now that we have a notion of distance between partitions, we may formally define the {\em overlap gap property} (OGP).
The definition of the OGP pertains to a particular instance $G_n$ of a distribution over graphs $\mathcal{G}_n$. %

\begin{definition}[\bf Overlap gap property -- planted model]\label{def:OGP}
For a graph $G_n$ with planted partition~$\cP$, the optimisation problem $\max_\cA q_\cA(G_n) $
exhibits OGP with values $\mu > 0$ and $0 \leq \nu_1 < \nu_2$ if the following holds: %
For any partition $\cB$ for which $q_\cB(G_n)\geq \q(G_n)-\mu$, it holds that either $d(\cB, \cP)\leq \nu_1$, or that $d(\cB, \cP)\geq\nu_2$. Furthermore, $d(\cB', \cP)\geq \nu_2$ does indeed occur for some partition $\cB'$ with $q_{\cB'}(G_n)\geq \q(G_n) - \mu$. 
\end{definition}

\subsection{Modularity exhibits OGP}

Theorem~\ref{thm:OGP} below states that the modularity score exhibits OGP on the SBM, provided that the number of communities, $k$, is at least 3:

\begin{theorem}
[\bf Modularity has OGP]\label{thm:OGP} 
Fix real numbers $a>b>0$, and integer $k\geq 3$.
For any $\nu\in \left(\frac{1}{2(k-1)}, \frac{1}{k}\right)$ and {$\varepsilon>0$}, there exist $\mu = \mu(\nu)$ and $c=c(\eps)$ such that the following holds for $n$ large enough.

Let $p=p(n)=\omega\; a / n$ and $q=q(n)=\omega \; b/n$ be such that $\omega>c$, and let $G \in \mathcal{G}(n,k,p,q)$ with planted partition $\cP$. Then with probability at least $1-\eps$, for every $k$-part partition $\cA$ with 
\begin{equation}
\label{eq:OGP}
q_\cA(G) \ge \frac {a-b}{a+(k-1)b} \left( 1 - \frac {1}{k} - \frac {2}{k^2} \right) - \mu,
\end{equation} 
either $d(\cA, \cP) \le \frac {{1}}{2(k-1)}$ or $d(\cA, \cP) \ge \nu$.
Moreover, there are partitions $\cA$ satisfying the latter. 
\end{theorem}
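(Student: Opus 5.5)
Throughout we work in the dense-enough regime $\omega > c(\eps)$. There, edge counts between any pair of vertex sets concentrate uniformly, so modularity of every $k$-part partition is well approximated by a deterministic function of its overlap matrix with $\cP$.

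\textbf{Step 1: deterministic approximation.} For a $k$-part partition $\cA$, write $x_{ij} = |A_i\cap P_j|/n$, so that each column sum $\sum_i x_{ij} = 1/k$. Using expected edge counts, $m \approx \frac{n^2\omega}{2k}\,\frac{a+(k-1)b}{k}$. One gets
\[
q^E_\cA \approx \frac{\sum_i\bigl(a\sum_j x_{ij}^2 + b\sum_{j\ne l}x_{ij}x_{il}\bigr)}{(a+(k-1)b)/k}.
\]
The degrees are all $\approx \omega(a+(k-1)b)/k$, so $q^D_\cA \approx \sum_i r_i^2$ with $r_i=\sum_j x_{ij}$. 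Simplifying,
\[
f(x) := \frac{k(a-b)}{a+(k-1)b}\Bigl(\sum_{i,j}x_{ij}^2 - \frac1k\sum_i r_i^2\Bigr).
\]
I would prove that $\sup_{\cA}|q_\cA(G)-f(x(\cA))| \le \delta$ with probability $\ge 1-\eps$ once $\omega>c$. This uses Chernoff bounds on $e(S,T)$ for all pairs $(S,T)$ together with a union bound over $k^n$ partitions. That requires $\omega$ to be a large constant relative to $\delta^{-2}$, which is the role of $c(\eps)$; alternatively one can invoke a cut-norm/spectral bound $\|A-\mathbb E A\| = O(\sqrt{\omega})$, which gives error $O(1/\sqrt{\omega})$. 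Applying this bound to $\cP$, $f=\frac{a-b}{a+(k-1)b}(1-\frac1k)$, and the Bickel--Chen type result of Theorem~\ref{thm.SBMrecovery} gives $\q(G)\approx$ this value.

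\textbf{Step 2: the deterministic landscape.} Next I would define
\[
F(t) = \max\{f(x): d(x)=t\},\qquad d(x)=1-\max_\sigma\sum_j x_{\sigma(j)j}.
\]
The threshold $\frac{a-b}{a+(k-1)b}(1-\frac1k-\frac2{k^2})$ should equal $F$ at the distance $\frac{1}{2(k-1)}$. The natural extremal configurations are as follows. \emph{Merging} two planted parts and leaving one part empty gives distance $1/k$ and $f$ equal to the leading constant times $1-\frac1k-\frac2{k^2}$. To check: $\sum x^2 = 1/k$, and $\sum r^2 = (k-2)/k^2 + 4/k^2$, so $f\propto 1-\frac{k+2}{k^2}$, which matches. \emph{Splitting} one planted part in half across two $\cA$ parts (or interpolations between these) gives the other branch. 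I would show that $F(t)$ is below the threshold value on the open interval $(\frac1{2(k-1)},\nu]$ by a margin of at least $2\mu(\nu)$, while $F(1/k)$ equals the threshold. So the gap region is $t\in(\frac1{2(k-1)},\nu]$, and $\nu<1/k$ leaves room for a positive $\mu$.

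Concretely, the plan is to maximise the quadratic $f$ under the linear constraints for fixed $d$. For fixed alignment, $\sum_{ij}x_{ij}^2$ is convex, so the maximum is attained at extreme points of the polytope $\{x\ge0,\ \text{column sums }1/k,\ \text{trace}=1-t\}$. The negative term $-\frac1k\sum r_i^2$ complicates this, so I would instead analyse candidate configurations: move a mass $s$ of off-diagonal nodes from one column to one row, and compute $f$ as a piecewise quadratic in $t$. The crossing point where $f$ dips back to the threshold should be exactly $t=\frac{1}{2(k-1)}$; here $k\ge3$ matters, because for $k=2$ merging is never near-optimal.

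\textbf{Step 3: conclusion.} On the good event, consider any $\cA$ satisfying (\ref{eq:OGP}). Then $f(x(\cA))\ge$ threshold $-\mu-\delta$, and by Step 2 (with $\delta<\mu$) we get $d\notin(\frac1{2(k-1)},\nu)$. For existence, take $\cA$ obtained by merging $P_1\cup P_2$ and splitting $P_3$ slightly so that the partition has $k$ nonempty parts. Its distance is about $1/k>\nu$, and its value is within $\mu$ of the threshold by Step 1.

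The main obstacle is Step 2: proving the sharp global bound on $F(t)$ over the whole gap region, not just checking the candidates. I would first try symmetrisation/rearrangement arguments showing that an optimiser concentrates its off-diagonal mass in a single row and column, which reduces the problem to a two-parameter optimisation.
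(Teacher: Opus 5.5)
\textbf{There is a genuine gap, and it is in Step 2.} The overall structure is right: approximate $q_\cA(G)$ uniformly by a quadratic in the overlap matrix, bound the deterministic landscape, then conclude by the case analysis in Step 3. Step 1 is also fine. Your $f$ is exactly the paper's $\frac{a-b}{a+(k-1)b}\,g(X)/k^2$ after rescaling $x_{ij}$ by $k$. A direct Chernoff/Bernstein bound plus a union bound over the $k^n$ partitions is a legitimate, more elementary substitute for the paper's route through the weighted-graph concentration result and fattening. (Your $m$ is off by a factor $n/k$, but only the ratio is used.)

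Step 2 is the problem. The theorem rests entirely on an \emph{upper} bound for $F$ on $(\frac{1}{2(k-1)},\nu)$. Evaluating candidate configurations only gives lower bounds on $F$, and the proposed symmetrisation is never specified. So as written the proof does not exclude partitions in the gap region.

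\textbf{The missing idea is that $f$ is convex.} Your reason for abandoning the extreme-point argument is mistaken. For each row,
$\sum_j x_{ij}^2-\frac1k r_i^2=\frac1k\sum_{j<j'}(x_{ij}-x_{ij'})^2$, the quadratic form of the projection $I-\frac1k J$. So the negative term does not spoil convexity; this is exactly why the paper writes modularity through $g(X)=\sum_i\sum_{j<j'}(x_{ij}-x_{ij'})^2$. With convexity the argument is short:
\begin{enumerate}
\item Relabel the parts of $\cA$ so that the identity is an optimal alignment. Then drop the alignment constraint, which only enlarges the feasible set and so still gives an upper bound.
\item For $d<1/k$, each column's off-diagonal mass is at most $d<1/k$. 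Hence the constraints $x_{jj}\ge 0$ are slack, and in the off-diagonal coordinates the feasible set is the simplex $\{x_{ij}\ge 0,\ \sum_{i\ne j}x_{ij}=d\}$.
\item A convex function is maximised at a vertex, i.e.\ with all mass $d$ on a single entry. This gives $F(d)\le \frac{a-b}{a+(k-1)b}\,h(d)$ with $h(d)=1-\frac1k-2d\bigl(1-d(k-1)\bigr)$, which is Lemma~\ref{lem:optimisation}.
\item Since $h$ is increasing on $[\frac{1}{2(k-1)},\frac1k]$, you get $F\le \frac{a-b}{a+(k-1)b}h(\nu)$ on the gap. Then $\mu=\frac{a-b}{2(a+(k-1)b)}\bigl(h(1/k)-h(\nu)\bigr)$ together with a concentration error $\delta<\mu$ finishes Step 3. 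Note that $c$ then depends on $\nu$ as well as $\eps$.
\end{enumerate}

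\textbf{Your description of the landscape is also wrong,} though this does not break the logic of Step 3. The point $\frac{1}{2(k-1)}$ is the \emph{minimiser} of $h$. Its value $h(\frac{1}{2(k-1)})=1-\frac1k-\frac1{2(k-1)}$ is strictly below the decoy level $1-\frac1k-\frac2{k^2}$ for $k\ge 3$. The decreasing branch meets the decoy level at $d=\frac{1}{k(k-1)}$, not at $\frac{1}{2(k-1)}$. So the landscape was guessed rather than computed, which is another sign that Step 2 still has to be done.
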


\begin{figure}
  \centering
  \includegraphics[scale=0.2]{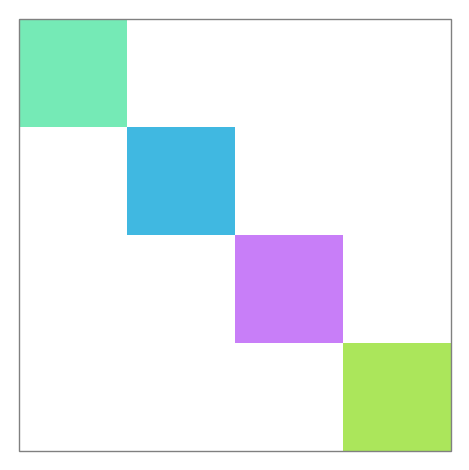} \;\;\;\;\;\;\;\;\;\;\;\;\;\;\;\;\;\;\;\;\;\;\;\;\;\;\;\;
  \includegraphics[scale=0.2]{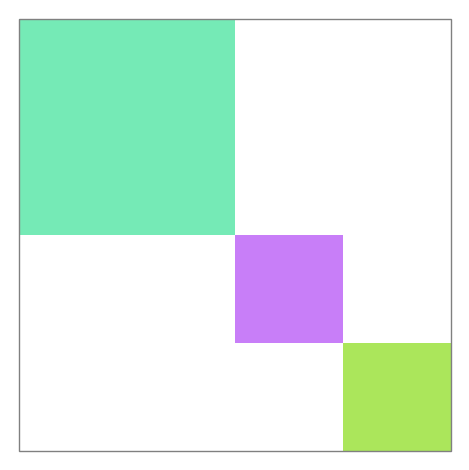}
  \caption{The figure shows the planted partition $\cP$ (left) and a `decoy' partition $\cD$ (right) for the SBM which has a planted partition of four equal-size parts. The partitions are denoted by showing the adjacency matrix of $G$ and colouring cell $u,v$ with colour $i$ if nodes $u,v$ are placed together in part / community $i$ in the partition. A `decoy' partition $\mathcal{D}$ is the one in which two of the planted blocks are clustered together in the same part, and each of the remaining planted blocks form their own part.}\label{fig.planted_and_decoy}
\end{figure}

\paragraph*{Intuition.} Consider a `decoy' partition $\mathcal{D}$ in which two of the planted blocks are clustered together in the same part, and each of the remaining planted blocks forms its own part. (There are $\binom{k}{2}$ such decoy partitions by symmetry.) We call $\cD$ a `decoy' partition since it has a `misleadingly' high modularity score given its distance to the planted partition $\cP$; see Figure~\ref{fig.planted_and_decoy}. 
(We emphasise that the decoy $\cD$ is an alternate partition of the nodes rather than an alternate random graph model to generate the graph.)

Given the decoy partition $\cD$, the parameters for OGP are set as follows.
We set $\mu = \q(G)-q_\mathcal{D}(G)+\eta$ for some small $\eta>0$. The idea is that partitions $\cB$ with $q_\cB(G)\geq \q(G) - \mu$ will include the decoy partition, those \emph{very} `close to' the decoy, as well as partitions somewhat `close to' the planted (and modularity optimal) partition.
Then we set $\nu_2 = d(\cP, \mathcal{D})-\delta = 1/k - \delta$ for some small $\delta>0$ so that the decoy partition and those very close to it will fall within the second interval. Lastly, we need to set $\nu_1$. Notice that, for small distances from the planted partition, the modularity score of partitions decreases with distance, so we set $\nu_1$ to be some distance by which the likely modularity score has dipped below $\q(G)-\mu$. This explains the intuition behind Theorem~\ref{thm:OGP}.

As an implication of the OGP we show that a natural algorithm based on Markov Chain 
Monte Carlo (MCMC) updates takes exponential time to reach any proximity of the 
ground truth. We state our result informally, and defer the formal statement as well as the proof to Appendix~\ref{subsection:slow-mixing}.
\begin{theorem}[Informal]\label{thm:slow_mixing_informal}
For every $\zeta>0$, there exists a large enough inverse temperature parameter
$\beta>0$, such that when the chain is initiated at distance
at least $1/k$ from the ground truth, the time to reach a partition close
to the ground truth is at least $\exp(\Theta(n))$ with high probability.
\end{theorem}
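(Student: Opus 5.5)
The plan is a bottleneck (conductance) argument on the Gibbs measure, using the static OGP of Theorem~\ref{thm:OGP} as the only input about $G$. First I fix the chain. The state space is the set of $k$-part partitions of $[n]$, of size at most $k^n$. The target measure is $\pi_\beta(\cA)\propto \exp(\beta n\, q_\cA(G))$. The dynamics is the Metropolis chain that picks a uniform vertex and a uniform part, and proposes to move the vertex there. A single move changes $d(\cdot,\cP)$ by at most $1/n$, so any trajectory from $\{d\ge 1/k\}$ to a partition close to $\cP$ must visit every intermediate distance level. In particular it must pass through the band
\[
B=\Bigl\{\cA:\ \tfrac{1}{2(k-1)}<d(\cA,\cP)<\nu\Bigr\},
\]
for a fixed $\nu\in(\frac1{2(k-1)},\frac1k)$. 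Here ``close to the ground truth'' is interpreted as $d\le \frac1{2(k-1)}$.

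Second, I extract an energy gap from Theorem~\ref{thm:OGP}. Let $T=\frac{a-b}{a+(k-1)b}\bigl(1-\frac1k-\frac2{k^2}\bigr)$ be the threshold in (\ref{eq:OGP}); up to $o(1)$ this is the typical value $q_\cD(G)$ of a decoy partition. Then with probability at least $1-\eps$ every $\cA\in B$ satisfies $q_\cA(G)<T-\mu$. I also need a concentration statement: some decoy $\cD$ (distance $1/k$ from $\cP$) has $q_\cD(G)\ge T-\mu/2$ w.h.p. This follows from concentration of $e(\cdot)$ and $\vol(\cdot)$ for the fixed sets of $\cD$ when $\omega$ is large.

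Third, I bound the hitting time. Let $S=\{\cA: d(\cA,\cP)\ge \nu,\ q_\cA(G)\ge T-\mu/2\}$, which contains $\cD$. For the chain started from $\pi_\beta$ conditioned on the component of $\{d\ge\nu\}$ containing $S$, stationarity gives
\[
\pr(\text{hit } B \text{ before time } t)\le t\,\frac{\pi_\beta(B)}{\pi_\beta(S)}
\le t\,\frac{k^n e^{\beta n (T-\mu)}}{e^{\beta n (T-\mu/2)}}
= t\,e^{\,n(\log k-\beta\mu/2)}.
\]
This is the standard union-over-steps argument. Choosing $\beta> 2(\log k+\zeta)/\mu$ makes the right-hand side at most $t e^{-\zeta n}$, so the hitting time exceeds $e^{\zeta n/2}$ with high probability. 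This is the $\exp(\Theta(n))$ bound.

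The main obstacle is passing from a stationary-type start to a deterministic start at distance at least $1/k$, for instance at $\cD$ itself. I would handle this in one of two ways.
\begin{itemize}
\item[(i)] Start at $\cD$ and compare with the conditioned start. For $\beta$ this large, the restricted measure on $\{d\ge\nu\}$ puts mass at least $e^{-O(n)}$ on $\cD$, but this loses a factor that must be beaten by $\beta\mu$. So I would instead take $\beta$ larger still and run the same union bound along the path directly.
\item[(ii)] Use the Freidlin--Wentzell style observation. Any path from $\cD$ into $B$ must climb down in modularity by $\mu/2$, i.e.\ in $\beta n q$ by $\beta n\mu/2$. The number of paths of length $t$ is at most $(kn)^t$. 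The Metropolis acceptance probabilities along any such path multiply to at most $e^{-\beta n\mu/2}$ times the ratio of the reversed path. Summing over first-entrance paths via reversibility gives the same $e^{-n(\beta\mu/2-\log k)}$ bound.
\end{itemize}
I expect the detailed bookkeeping of option (ii) to be the delicate part. I also need to ensure the high-probability events about $G$ (OGP and decoy concentration) hold simultaneously, which costs only the $\eps$ from Theorem~\ref{thm:OGP} and a union bound.
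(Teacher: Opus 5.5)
Your argument is correct and is essentially the paper's proof of Theorem~\ref{thm:slow_mixing_formal}. You start from the Gibbs measure conditioned on the far set, note that single moves change $d(\cdot,\cP)$ by at most $1/n$, so reaching the close set forces a visit to the OGP band $B$, and union-bound over time steps using $\pi_\beta(B)/\pi_\beta(\mathcal{E}_{\rm far})\le k^n e^{-c\beta n}$.

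Your extra goal of a deterministic start at $\cD$ goes beyond what the paper proves, and it is easier than you expect. Your Metropolis chain is exactly reversible with respect to $\pi_\beta$, so $\pi_\beta(\cD)P^s(\cD,B)=\sum_{y\in B}\pi_\beta(y)P^s(y,\cD)\le\pi_\beta(B)$. This gives $P^s(\cD,B)\le k^n e^{-\beta n\mu/2}$ directly, with no path counting; this is all that option (ii) reduces to. Option (i), by contrast, does not close just by enlarging $\beta$. The loss factor $\pi_\beta(\cD\mid d(\cdot,\cP)\ge\nu)^{-1}$ is only bounded by $k^n e^{\beta n(\max_{d(\cA,\cP)\ge\nu}q_\cA-q_\cD)}$, and this bound also grows with $\beta$.
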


\subsection{Modularity as a predictor of the planted partition}\label{subsec:close_is_close}

In a breakthrough result of Bickel and Chen~\cite{bickel2009nonparametric}, it was shown that in the regime of growing degree, the modularity optimal partition of the stochastic block model is within distance~$o(1)$ of the planted partition (see also~\cite{bickel2015correction}): 

\begin{theorem}[\cite{bickel2009nonparametric,bickel2015correction}]
\label{thm.SBMrecovery}
Fix real numbers $a>b>0$, integer $k\geq 2$ and let $\omega=\omega(n) \to \infty$ as $n \to \infty$. 
Let $p=\omega\, a/n$ and $q=\omega\, b /n$, and let $G \in \mathcal{G}(n,k,p,q)$ with the planted partition~$\cP$. Then,
\[
d\big( \arg \max q_\cA(G) , \, \cP \big) = o(1).
\]
\end{theorem}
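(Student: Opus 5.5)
The plan is to compare the modularity score of every partition with a deterministic ``population'' functional of its overlap with $\cP$. The two ingredients are a uniform concentration bound and a stability analysis of that functional. For a partition $\cA$ (with any number of parts), define the overlap matrix $M_{A,i}=|A\cap P_i|/n$ and row sums $s_A=\sum_i M_{A,i}$. A direct computation, using that all expected degrees equal $\bar d\approx \omega(a+(k-1)b)/k$, gives
\[
\mathbb{E}\text{-version of } q_\cA(G)\;\approx\; f(M):=\frac{a-b}{a+(k-1)b}\Big(k\sum_{A,i}M_{A,i}^2-\sum_A s_A^2\Big),
\]
up to $O(1/n)$ errors. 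At $\cP$ this equals $\frac{a-b}{a+(k-1)b}(1-\frac1k)$. The proof then splits into two claims. First, $\sup_\cA |q_\cA(G)-f(M_\cA)|=o(1)$ with high probability. Second, $f(M)\ge f_{\max}-\delta$ forces $d(\cA,\cP)\le \eta(\delta)$, with $\eta(\delta)\to0$ as $\delta\to0$. Together these show that $\arg\max q_\cA(G)$ satisfies $f(M)\ge f_{\max}-o(1)$, and hence lies at distance $o(1)$ from $\cP$.

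\textbf{Uniform concentration.} Write $W=X-\mathbb{E}X$ for the centred adjacency matrix. The edge-contribution error is $\frac1m\sum_{u,v}W_{uv}\mathbf 1[u,v\text{ in the same part}]$. The same-part indicator matrix is PSD with unit diagonal, so Grothendieck's inequality bounds this quantity over \emph{all} partitions simultaneously by $K_G\|W\|_{\infty\to1}/m$. Standard bounds (e.g.\ Bernstein plus a union bound over $\{\pm1\}^n\times\{\pm1\}^n$) give $\|W\|_{\infty\to1}=O(n^{3/2}\sqrt{\omega/n})$ with high probability. Since $m=\Theta(\omega n)$, the error is $O(\omega^{-1/2})=o(1)$. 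For the degree tax, write $d_u=\bar d+\Delta_u$. Then $\vol(A)=|A|\bar d+\Delta(A)$ with $\sum_A|\Delta(A)|\le\|\Delta\|_1=O(n\sqrt{\omega})$ by Cauchy--Schwarz on the variances, so $\sum_A\vol(A)^2/\vol(V)^2=\sum_A s_A^2+O(\|\Delta\|_1/(n\bar d))=\sum_A s_A^2+o(1)$. Finally, $m/\mathbb{E}m\to1$.

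\textbf{Stability of $f$.} For each part $A$ we have $k\sum_iM_{A,i}^2-s_A^2\le s_A\,(k\max_iM_{A,i}-s_A)$. Summing over $A$ and using the column constraints $\sum_AM_{A,i}=1/k$, I would show that $f\le \frac{a-b}{a+(k-1)b}(1-\frac1k)$, with near-equality forcing two things. First, almost all mass lies in parts whose mass is essentially concentrated in a single block. Second, each block $P_i$ is essentially captured by a single part, since splitting a block among parts loses $\sum_A s_A^2$ only against a larger loss in $k\sum M^2$. The natural route is to reduce to finitely many ``large'' parts (parts of mass below a threshold $\tau$ contribute at most $O(\tau)$ to the gap), and then use continuity and compactness on the finite-dimensional simplex to obtain $\eta(\delta)$.

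I expect the main obstacle to be this stability step, because partitions may have unboundedly many parts and the second-order (quadratic) deficit must be converted into the linear classification error $d(\cA,\cP)$. If the compactness argument turns out to be awkward, I would instead aim for an explicit inequality of the form $f_{\max}-f(M)\ge c\, d(\cA,\cP)^2$ after a suitable matching $\sigma$.
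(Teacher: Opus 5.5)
The paper gives no proof of this statement; it is quoted from Bickel and Chen. Your proposal is a correct, self-contained alternative route.

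\textbf{Comparison with the paper.} The closest argument in the paper is the proof of Proposition~\ref{prop.detailed_close_is_close}. There, Lemma~\ref{lem:write_mod_as_signature} concentrates $q_\cA(G)$ around $\frac{a-b}{a+(k-1)b}\,g(X)/k^2$, which is exactly your $f(M)$ under $x_{ij}=kM_{A_i,j}$. Lemmas~\ref{lem:optimisation} and~\ref{lem:opt_far_mod} then supply the stability, and letting $\delta\to 0$ recovers the theorem.

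Your concentration step is the more economical of the two. Grothendieck's inequality together with $\|W\|_{\infty\to1}=O(n\sqrt{\omega})$ is uniform over partitions with any number of parts, and it gives an explicit rate $O(\omega^{-1/2})$. The paper's lemma is stated only for $k$-part partitions, and it goes through the fat-partition sampling result (Proposition~\ref{prop:from_sampling}) and amalgamation (Lemma~\ref{lem.fattening_w}). The cited Bickel--Chen argument instead works with block edge-count matrices, controlled over all $k^n$ labellings by Bernstein and a union bound.

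Conversely, the paper solves the quadratic programme exactly at every distance. That is what produces the non-monotone profile $h(d)$ needed for the OGP. Your per-part bound only controls the neighbourhood of the maximiser, which is all that consistency requires.

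\textbf{Completing the stability step.} The step you flag as the main obstacle closes directly, without compactness, and with a linear rather than quadratic dependence on the deficit. Put $\mu_A=\max_i M_{A,i}\le 1/k$ and $r_A=s_A-\mu_A$. The right-hand side of your per-part bound equals $(k-1)\mu_A^2+(k-2)\mu_A r_A-r_A^2$. Using $\mu_A\le 1/k$ and $\sum_A(\mu_A+r_A)=1$, this gives
\[
k\sum_{A,i}M_{A,i}^2-\sum_A s_A^2\;\le\;\frac{k-1}{k}-\frac{1}{k}\sum_A r_A-(k-1)\sum_A \mu_A\Big(\frac{1}{k}-\mu_A\Big).
\]
So a deficit $\delta$ in the bracket forces $\sum_A r_A\le k\delta$ and $\sum_A\mu_A(\frac1k-\mu_A)\le \delta/(k-1)$. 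The following steps finish the argument.
\begin{itemize}
\item Parts with $\mu_A\le \frac{1}{2k}$ have total $\sum\mu_A\le 2k\delta/(k-1)$.
\item The column constraints allow at most one part with $\mu_A>\frac{1}{2k}$ for each dominant block, so these parts have distinct dominant blocks.
\item Aligning them with their dominant blocks gives $d(\cA,\cP)\le 3k\delta$.
\end{itemize}
Combined with your concentration bound, this even yields $d(\arg\max q_\cA(G),\cP)=O_P(\omega^{-1/2})$.
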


Since modularity-based clustering algorithms remain the most popular, despite known problems such as the resolution limit \cite{fortunato2007resolution}, it is reassuring that the modularity optimal partition is quite close to the `correct' partition in the SBM, a natural simplified model of community structure.  

However, we would like to know more. In particular, what about partitions with nearly optimal modularity score? Are these partitions also close to the planted partition? Our Theorem~\ref{thm.close_is_close} (along with some bounds on modularity scores for partitions at higher distances) implies that the answer is `yes': if the partition has a score within a certain distance of the maximal modularity value, then the partition must be near the planted one; see Appendix~\ref{app:close_is_close} for the proof.

\begin{theorem}    
[\bf Partitions with near-optimal modularity are close to the planted]\label{thm.close_is_close} 
Fix $a>b>0$ and integer $k\ge 2$. Let $p=p(n)=\omega\; a / n$ and $q=q(n)=\omega \; b/n$. For any $\eps>0$ and $\delta<\frac{a-b}{a+(k-1)b} \cdot \frac{2}{k^2}$, there exists $c=c(\eps)$ such that, if $\omega>c$ for $n$ large enough, then the following holds with probability at least $1-\eps$ provided $n$ is large enough. 

Let $G \in \mathcal{G}(n,k,p,q)$ with the planted partition $\cP$. 
Then, $q_\cA(G)>\q(G) - \delta $ for some $k$-part partition $\cA$ implies that $d(\cA, \cP) < \delta' + \varepsilon$,
where $\delta' =\delta'(\delta) \in \big(0,\frac{1}{k(k-1)}\big)$.
\end{theorem}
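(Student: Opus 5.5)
Our plan is to reduce everything to a deterministic ``expected modularity'' functional and then control fluctuations uniformly over all $k$-part partitions. Encode a $k$-part partition $\cA$ by its overlap matrix $M=(M_{ij})$ with $M_{ij}=|A_i\cap P_j|/n$. Each column of $M$ sums to $1/k$, and $d(\cA,\cP)=1-\max_\sigma\sum_i M_{\sigma(i)i}$. In $\mathcal{G}(n,k,p,q)$ the expected number of edges inside $A_i$ is, up to $O(n\omega)$ terms,
\[
\tfrac{n^2}{2}\Big(\tfrac{\omega a}{n}\textstyle\sum_j M_{ij}^2+\tfrac{\omega b}{n}\big[(\sum_j M_{ij})^2-\sum_j M_{ij}^2\big]\Big),
\]
and the expected volume is $\vol(A_i)\approx n\omega\,\tfrac{a+(k-1)b}{k}\sum_j M_{ij}$. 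Substituting these into \eqref{def.mod} gives a limiting function
\[
f(M)=\frac{k(a-b)}{a+(k-1)b}\Big(\sum_{i,j}M_{ij}^2-\frac1k\sum_i\Big(\sum_j M_{ij}\Big)^2\Big),
\]
where the $b$-terms combine with the degree tax. The first key step is concentration: with probability at least $1-\eps$, $\sup_\cA|q_\cA(G)-f(M_\cA)|\le \eta(c)$, with $\eta(c)\to0$ as $c\to\infty$. We would prove this by a Bernstein/Chernoff bound on $e(A_i)$ for fixed sets, followed by a union bound over at most $k^n$ partitions. Deviations of order $\eta n^2 p$ have probability $\exp(-\Theta(\eta^2 n\omega))$, which beats $k^n$ once $\omega>c(\eps,\eta)$. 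We would also use concentration of $m$ and of the volumes. This is the same mechanism as in Bickel--Chen (Theorem~\ref{thm.SBMrecovery}), but made quantitative for bounded $\omega$.

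The second step is deterministic analysis of $f$. At the planted partition $M=\frac1k I$ we get $f=\frac{a-b}{a+(k-1)b}(1-\frac1k)$, and this is the global maximum. At the decoy we get $\frac{a-b}{a+(k-1)b}(1-\frac1k-\frac2{k^2})$, which explains the threshold in Theorem~\ref{thm:OGP}. The main claim to prove is a quantitative stability bound: if $f(M)\ge f_{\max}-\delta$ with $\delta<\frac{a-b}{a+(k-1)b}\frac2{k^2}$, then $d\le\delta'(\delta)<\frac1{k(k-1)}$. We would prove it by writing $\sum_{ij}M_{ij}^2-\frac1k\sum_i r_i^2$ (with $r_i$ the row sums) and bounding $\sum_j M_{ij}^2\le \max_j M_{ij}\cdot r_i$. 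This shows that if some column $j$ has no dominant row (no $M_{ij}>\frac1k-\theta$), or two columns share their dominant row, the loss is at least a fixed quantity.

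The main obstacle is this case analysis. Sharing a dominant row among two columns, which is the decoy-like configuration, costs $\frac2{k^2}\cdot\frac{a-b}{a+(k-1)b}$ only in the extreme case. So we must show that every $M$ without an injective dominant assignment loses at least nearly that much, while every $M$ with such an assignment has $f_{\max}-f(M)$ growing at least linearly in $d$. The second part we would handle with a local expansion around the planted point: moving mass $x$ out of the diagonal decreases $f$ by at least $c_k x$. Combining the two parts gives the threshold $\delta'(\delta)$, which is continuous and lies below $\frac1{k(k-1)}$. We would obtain $\delta'$ as $\sup\{d(M): f(M)\ge f_{\max}-\delta\}$ over the compact set of $M$, using compactness together with the fact that at $\delta$ equal to the decoy gap the only maximisers at distance $\ge\frac1{k(k-1)}$ are decoy-type.

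Finally, we would transfer the result to $G$. Theorem~\ref{thm.SBMrecovery}, or simply concentration, gives $\q(G)\le f_{\max}+\eta$. Hence $q_\cA(G)>\q(G)-\delta$ implies $f(M_\cA)>f_{\max}-\delta-2\eta$. Continuity of $\delta'$ then yields $d<\delta'(\delta)+\eps$ once $c$ is large enough that $\eta$ is small.
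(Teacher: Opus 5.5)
\textbf{The gap is in Step~2.} This step carries the substance of the theorem; the concentration you need is already available. The statement requires $\delta'(\delta)<\frac{1}{k(k-1)}$ for \emph{every} $\delta<\frac{2}{k^2}\cdot\frac{a-b}{a+(k-1)b}$. Your compactness definition of $\delta'$ therefore works only if you prove the sharp bound $f(M)\le f_{\max}-\frac{2}{k^2}\cdot\frac{a-b}{a+(k-1)b}$ for every $M$ with $d(M)\ge\frac{1}{k(k-1)}$. In the paper's notation this says $g(X)\le k(k-1)-2$ whenever the off-diagonal mass is $t\ge\frac{1}{k-1}$.

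For $k\ge3$ this bound is attained not only by the decoy. The matrix $X^{(ij)}_k(\frac{1}{k-1})$ moves a $\frac{1}{k-1}$-fraction of one block into another block's part. It lies at distance exactly $\frac{1}{k(k-1)}$ and has $g=k(k-1)-2$. So your assertion that the only maximisers at distance $\ge\frac{1}{k(k-1)}$ are decoy-type is false. More to the point, none of your proposed tools can deliver an inequality that is tight at this configuration:
\begin{itemize}
\item A loss of ``nearly'' the gap is useless for $\delta$ close to the gap. For $k=3$ the extremiser above has a column split evenly, $\frac16,\frac16$, so it falls exactly in your no-dominant-row case, where you would need the exact gap.
\item A local expansion at $\cP$ controls only a neighbourhood of $d=0$. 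The true extremal loss, $\frac{a-b}{a+(k-1)b}\cdot 2d(1-(k-1)d)$, is concave in $d$ and tight at $d=\frac{1}{k(k-1)}$. So the only linear bound that does the job has slope exactly the chord slope $\frac{a-b}{a+(k-1)b}\cdot\frac{2(k-1)}{k}$ on all of $[0,\frac{1}{k(k-1)}]$. That is a sharp global statement, not a local one.
\item The row bound $\sum_j M_{ij}^2\le\max_j M_{ij}\,r_i$ is strict on row $i$ of $X^{(ij)}_k(t)$ for $0<t<1$; in $X$-units it replaces $1+t^2$ by $1+t$. So any estimate built on it undershoots the loss at the extremiser.
\end{itemize}

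\textbf{The missing idea} is to solve the quadratic programme exactly, as the paper does. Because $g$ is a sum of squares of linear forms, it is convex. Over the polytope of signatures with off-diagonal mass $t\in(0,1)$, where the alignment constraint holds automatically, its maximum is therefore attained at a vertex. These vertices are exactly the matrices $X^{(ij)}_k(t)$, which gives $\max g=k(k-1)-2t(k-t(k-1))$ (Lemma~\ref{lem:optimisation}). For $t>1$, Lemma~\ref{lem:opt_far_mod} moves mass between two nonzero entries of a column; along this line $g$ is a strictly convex quadratic, so one direction increases it. It continues until the distance reaches $1/k$ or every part is a union of blocks, then splits merged blocks, yielding $g<k(k-1)-2$. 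Combined with concentration, this identifies $\delta'$ explicitly as the smaller root of $2\delta'(1-(k-1)\delta')=\delta\,\frac{a+(k-1)b}{a-b}$. That root lies below $\frac{1}{k(k-1)}$ precisely because $\delta$ is below the gap.

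\textbf{Step~1 is fine}, and it is a more self-contained route than the paper's. Bernstein bounds on $e(A_i)$, $\vol(A_i)$ and $m$, with a union bound over the $k^n$ partitions, replace the weighted-graph lemma and the fattening/concentration results from \cite{sampling}. One fix in your last step: you need the \emph{lower} bound $\q(G)\ge q_\cP(G)\ge f_{\max}-\eta$, not the upper bound you wrote, and that lower bound is immediate.
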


\subsection{Modularity at distance at most $1/k$}

The next theorem relates the maximum modularity of a partition to its distance from the planted partition, and is the main ingredient in the proof of Theorem~\ref{thm:OGP}.
For any $k \ge 2$, we approximate the maximum modularity value over all partitions of $k\ge 2$ parts $\cA$ at distance~$t/k$ (for some $t \in [0,1]$) from the planted partition $\cP$.

\begin{theorem}[\bf Maximal modularity at distance at most $1/k$]\label{thm:main_theorem} 
Fix $a>b>0$. Let $p=p(n)=\omega\; a / n$ and $q=q(n)=\omega \; b/n$. For any $\eps>0$, there exists $c=c(\eps)$ such that if $\omega>c$ then the following holds with probability at least $1-\eps$ provided $n$ is large enough.

Let $G \in \mathcal{G}(n,k,p,q)$ with the planted partition $\cP$. 
For any $d \in \left[0,\frac{1}{k}\right]$, 
let 
$$H(d) = \max_{\cA: |\cA|\le k} \left\{ q_\cA(G) : d - \frac{1}{\sqrt{n}} \le d(\cA, \cP) \le d + \frac{1}{\sqrt{n}} \right\}.
$$
Then,
\begin{equation*}
\left| H(d)  - \frac {a-b}{a+(k-1)b}\cdot h(d) \right| \le \eps,
\end{equation*}
where 
\[h(d)= 1 - \frac {1}{k} - 2d\big (1-d(k-1)\big). \]
\end{theorem}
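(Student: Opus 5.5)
The plan is to reduce $q_\cA(G)$, uniformly over all partitions with at most $k$ parts, to a deterministic function of the \emph{overlap matrix} of $\cA$ with $\cP$, and then to solve the resulting finite-dimensional optimisation problem exactly. For a partition $\cA=(A_i)_{i=1}^k$ (padding with empty parts if $|\cA|<k$) set $x_{ij}=|A_i\cap P_j|/n$ and $r_i=\sum_j x_{ij}$. Then the column sums are $\sum_i x_{ij}=1/k$ (up to $O(1/n)$), and by \eqref{eq:dist_meas} we have $1-d(\cA,\cP)=\max_\sigma\sum_i x_{\sigma(i)i}$.

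\textbf{Step 1 (concentration, uniformly in $\cA$).} The expected quantities are
\[
\mathbb E\, e(A_i)\approx \tfrac{n^2}{2}\Big(p\sum_j x_{ij}^2+q\big(r_i^2-\sum_j x_{ij}^2\big)\Big),\qquad m\approx \tfrac{n^2}{2k}\big(p+(k-1)q\big),
\]
and $\vol(A_i)/\vol(V)\approx r_i$, since all expected degrees are equal. Each of $e(A_i)$, $\vol(A_i)$ and $m$ is a sum of independent indicators with variance $O(pn^2)$. A Chernoff bound gives a deviation of $t\,pn^2$ with probability at most $\exp(-\Omega(t^2pn^2))=\exp(-\Omega(t^2\omega n))$. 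A union bound over the at most $k^n$ partitions then succeeds once $t^2\omega\gg\log k$, so $t\asymp C/\sqrt{\omega}$. Choosing $c(\eps)$ large makes all relative errors at most $\eps$, simultaneously for all $\cA$, with probability $1-\eps$. Substituting into \eqref{def.mod}, the $b$-terms combine as $\frac{kb}{a+(k-1)b}-1=-\frac{a-b}{a+(k-1)b}$, which gives
\[
q_\cA(G)=\frac{a-b}{a+(k-1)b}\,f(x)+O(\eps),\qquad f(x)=k\sum_{i,j}x_{ij}^2-\sum_i r_i^2 .
\]
Thus it suffices to show that $F(d):=\max\{f(x): x \text{ feasible},\ 1-\max_\sigma\sum_i x_{\sigma(i)i}=d\}$ equals $h(d)$ for $d\in[0,1/k]$. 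Since $f$ is Lipschitz, the $1/\sqrt n$ window and the rounding to integer counts cost only $o(1)$.

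\textbf{Step 2 (lower bound, by construction).} Take the planted partition and move a $d$-fraction of all nodes from $P_2$ into $A_1$. This gives $x_{11}=1/k$, $x_{12}=d$, $x_{22}=1/k-d$, with the other parts planted. A direct computation gives
\[
f=k\Big(\tfrac1k-\tfrac{2d}{k}+2d^2\Big)-\Big(\tfrac1k+2d^2\Big)=1-\tfrac1k-2d+2(k-1)d^2=h(d).
\]
The identity permutation is optimal: the only competitor swaps parts 1 and 2 and gives alignment sum $d+\frac{k-2}{k}\le 1-d$ exactly when $d\le 1/k$. Hence the distance is exactly $d$. As a sanity check, at $d=1/k$ this is the decoy partition $\cD$, with $f=1-\frac1k-\frac2{k^2}$.

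\textbf{Step 3 (upper bound; the main obstacle).} We must show $f(x)\le h(d)$ for every feasible $x$ at distance $d$. Relabel so that the identity is an optimal permutation, so $\sum_i x_{ii}=1-d$ and $\sum_i x_{\sigma(i)i}\le 1-d$ for all $\sigma$; in particular $x_{ij}+x_{ji}\le x_{ii}+x_{jj}$. Since $f$ is a convex quadratic, its maximum over the polytope cut out by these linear constraints (together with $\sum_i x_{ii}=1-d$) is attained at a vertex. The plan is to characterise the relevant vertices by a mass-moving argument rather than by enumeration.

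Write the off-diagonal mass as $d=\sum_{i\ne j}x_{ij}$ and rewrite
\[
f=\sum_j\Big(k\sum_i x_{ij}^2\Big)-\sum_i r_i^2 .
\]
Within a fixed column, concentrating the off-diagonal mass into a single entry increases $k\sum_i x_{ij}^2$. The goal is to show that this increase is not offset by the $-\sum_i r_i^2$ term unless the mass is spread over several rows. One then checks that merging off-diagonal mass from different columns into a single column and a single target row does not decrease $f$; the permutation constraint $d\le 1/k$ is exactly what keeps the single-column configuration feasible. This reduces to the one-parameter family of Step 2, whose value is $h(d)$.

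I expect this exchange argument to be the hardest part. The $-\sum_i r_i^2$ penalty rewards spreading mass, which competes with the convex $k\sum x_{ij}^2$ term, so one must verify that the net effect $2(k-1)d^2-2d$ is indeed the extremum. The first thing I would try is a Lagrangian/KKT check at the candidate vertex, together with a second-order comparison of alternatives, such as moving mass from two different columns into the same row, or into two different rows.
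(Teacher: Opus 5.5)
Your Steps 1 and 2 are sound. Step 1 is a more self-contained route than the paper's. Since $pn^2=\omega a n$, a Chernoff deviation of $t\,pn^2$ beats the $k^n$ union bound once $t^2\omega\gg\log k$, and this covers partitions with tiny parts directly. The paper instead imports a concentration result for fat partitions from~\cite{sampling} and patches small parts with a fattening argument. Step 2 is correct, including the check that the transposition is the only competing alignment and that it loses exactly when $d\le 1/k$.

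The genuine gap is Step 3. It is the real content of the theorem (the paper's Lemma~\ref{lem:optimisation}), and you leave it as a plan. The plan as stated would not close it. When \emph{maximising} a convex function over a polytope, a KKT or second-order check at a candidate vertex certifies only local optimality, since such problems can have several locally maximal vertices. You would therefore have to compare all vertices. By keeping the $k!$ alignment constraints $\sum_i x_{\sigma(i)i}\le 1-d$, you have made the vertex set messy, for example with vertices where some transposition constraint is tight. Your worry that $-\sum_i r_i^2$ ``competes'' with $k\sum x_{ij}^2$ is a red herring. You already observed that $f=\sum_i\sum_{j<j'}(x_{ij}-x_{ij'})^2$ is jointly convex, and that is all that matters.

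The missing idea is to discard the alignment constraints and bound $f$ on a relaxation.
\begin{itemize}
\item After relabelling so the identity is optimal, eliminate the diagonal via $x_{jj}=1/k-\sum_{i\ne j}x_{ij}$. What remains is $x_{ij}\ge 0$ for $i\ne j$, $\sum_{i\ne j}x_{ij}=d$, and $\sum_{i\ne j}x_{ij}\le 1/k$ for each column $j$.
\item For $d\le 1/k$ the column constraints follow from the total. The feasible set is therefore just the simplex $\{x_{ij}\ge 0,\ \sum_{i\ne j}x_{ij}=d\}$.
\item Its vertices put all off-diagonal mass $d$ into a single entry $(a,b)$. Up to relabelling, this is exactly your Step 2 configuration, with value $h(d)$ for every $(a,b)$ by symmetry.
\item Convexity of $f$, which survives the affine substitution, gives $f\le h(d)$ on the whole simplex, hence on the true feasible set. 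Your Step 2 shows the relaxation is tight, because its maximisers happen to satisfy the alignment constraints when $d\le 1/k$.
\end{itemize}
This is in effect the paper's argument; its successive elimination of variables just locates these single-entry vertices. Your mass-moving idea also works if you run it inside this relaxed simplex. Transfer mass between two off-diagonal entries, adjusting the diagonals columnwise: along such a line $f$ is convex, so one direction does not decrease it. Repeat until one entry remains. The point is that intermediate configurations may violate the alignment constraints and so change the distance. The moves must therefore be made in the relaxation, not on the set of matrices at distance exactly $d$.
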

The random variable $H(d)$ is defined as the maximal modularity over all $k$-part partitions with the distance from $\cP$ in an interval $d\pm 1/\sqrt{n}$. We keep the margins $\pm 1/\sqrt{n}$ for the technical reason that there might not be any partition at distance exactly $d$ (say, when $d = \pi / 4k$, since the distance is always a rational number). Theorem~\ref{thm:main_theorem} establishes that $H(d)$ is likely to be well approximated by the deterministic expression $\frac{a-b}{a+(k-1)b}\cdot h(d)$.

\begin{figure}
\begin{center}
\hspace{-5mm}
\begin{tikzpicture}[scale=0.86]

\def\k{4}          
\def\tau{0.22}     

\def\eps{0.007} 

\pgfmathsetmacro{\xmax}{1/\k}
\pgfmathsetmacro{\xmid}{1/(2*(\k-1))}

\pgfmathsetmacro{\tauone}{1/(\k-1) - \tau}
\pgfmathsetmacro{\tautwo}{\tau}

\pgfmathsetmacro{\hzero}{1 - 1/\k}
\pgfmathsetmacro{\honek}{1 - 1/\k - 2/(\k*\k)}
\pgfmathsetmacro{\hnu}{1 - 1/\k - 2*\tau*(1 - \tau*(\k-1))+\eps}

\pgfmathsetmacro{\muval}{\hzero - \hnu}

\pgfmathdeclarefunction{h}{1}{%
  \pgfmathparse{1 - 1/\k - 2*(#1)*(1 - (#1)*(\k-1))}%
}

\begin{axis}[
  width=14cm,
  height=9cm,
  axis lines=left,
  xmin=-0.02*\xmax, xmax=1.02*\xmax,
  ymin=\hnu-0.05, ymax=\hzero+0.02,
  y axis line style={line width=1.2pt}, 
  tick style={line width=1.2pt},
  clip=false,
  scaled x ticks=false,
  xtick={0,\tauone,\xmid,\tautwo,\xmax},
  xticklabels={$0$, $\nu_1$,$\frac{1}{2(k-1)}$,$\nu_2$,$\frac{1}{k}$},
  ytick={\hzero,\honek,\hnu},
  yticklabels={$h(0)$,$h(\frac{1}{k})$,$h(0)-\mu$},
  xlabel={$d$},
  ylabel={},
]

\addplot[
  red,
  line width=1.6pt,
  dash dot
] coordinates {(0,\hnu) (\xmax,\hnu)};

\addplot[
  gray!70,
  line width=1.2pt,
  dashed,
  domain=0:\xmax,
  samples=400
] {h(x) + \eps};

\addplot[
  gray!70,
  line width=1.2pt,
  dashed,
  domain=0:\xmax,
  samples=400
] {h(x) - \eps};

\addplot[
  blue,
  line width=2.0pt,
  domain=0:\xmax,
  samples=400
] {h(x)};

\node[font=\Large] at (axis description cs:0.06,1.03) {$h(d)$};

\end{axis}
\end{tikzpicture}
\caption{A graph of the function $h(d)$ for $k=4$ and the relation to the OGP parameters $\mu, \nu_1$ and $\nu_2$ -- see the text following Theorem~\ref{thm:main_theorem}. The dashed grey lines indicate an $\eps$-region above and below~$h(d)$. By Theorem~\ref{thm:main_theorem}, any partition $\cA$ at distance $d$ is likely to have modularity at most $h(d)+\eps$. From this, we will be able to conclude that it will be likely that any partition $\cA$ with modularity at least $h(0)-\mu$ (i.e., above the red line)  will be at a distance $d\leq \nu_1$ or $d\geq \nu_2$, thus establishing the OGP with these parameters.}\label{fig:functions_h_new}
\end{center}
\end{figure}

Note that function $d\mapsto h(d)$ decreases on $\left[0, \frac {1}{2(k-1)}\right]$ and increases on $\left[\frac {1}{2(k-1)},\frac{1}{k}\right]$. As a result, 
\begin{itemize}
\item $h(0) = 
1 - \frac {1}{k}$, 
\item $%
h(d)$  
is decreasing on $[0, \frac {1}{2(k-1)}]$ reaching $h\left(\frac {1}{2(k-1)}\right) = 
1 - \frac {1}{k} - \frac {1}{2(k-1)}$,
\item $%
h(d)$ is increasing on $[\frac {1}{2(k-1)},\frac{1}{k}]$ reaching $h(1/k) = 
1 - \frac {1}{k} - \frac {2}{k^2}$.
\end{itemize}

One can easily separate $h(1/k)$ from its local minimum $h(\frac {1}{2(k-1)})$ by introducing a threshold $\mu$ in Theorem~\ref{thm:OGP}, say, $\mu = 
h(0)-h(\nu)$ with $\nu\in \left(\frac{1}{2(k-1)}, \frac{1}{k}\right)$. In Figure~\ref{fig:functions_h_new}, we illustrate $h(d)$ for $d\in[0,\frac{1}{k}]$ and the interplay with the OGP parameters $\mu, \nu_1$ and $\nu_2$.   %

Below, we will show Theorem~\ref{thm:OGP}, i.e.,\ that modularity has OGP in the SBM -- assuming Theorem~\ref{thm:main_theorem}, where we recall that Theorem~\ref{thm:main_theorem} characterises the maximum modularity score over partitions $\cA$ at given distances $d\in [0,\frac{1}{k}]$ from the planted partition~$\cP$. We outline the proof of Theorem~\ref{thm:main_theorem} in Sections~\ref{sec:distance}--\ref{sec:optimisation}, and give full details in Appendices~\ref{app:remainder_of_proof} and~\ref{app:optimisation}. We prove Theorem~\ref{thm:OGP} using Theorem~\ref{thm:main_theorem} in Appendix~\ref{app:proof-OGP}.

The asymptotic maximal modularity value in $\mathcal{G}(n,k,p,q)$ is known and also known to coincide with the modularity of the planted partition; we state it in Theorem~\ref{thm:modularity_function} below for completeness. The result follows from Theorem~\ref{thm.SBMrecovery} and was also given in~\cite{sampling} for slightly higher $p,q$. We note that (\ref{eq:qP}) in Theorem~\ref{thm:modularity_function} is a direct corollary of Theorem~\ref{thm:main_theorem} as a special case when~$d=0$. Also~$(5)$ follows, for example, by~Lemma~\ref{lem:write_mod_as_signature}.

\begin{theorem}[\bf Maximal modularity]\label{thm:modularity_function} 
Fix $a>b>0$ and integer $k\geq 2$. Let $p=p(n)=\omega\; a / n$ and $q=q(n)=\omega \; b/n$. For any $\eps>0$, there exists $c=c(\eps)$ such that, if $\omega>c$ for $n$ large enough, then the following holds with probability at least $1-\eps$ provided $n$ is large enough.
Let $G \in \mathcal{G}(n,k,p,q)$ and denote the planted partition by $\cP$. Then,
\begin{equation}\label{eq:qP}
\left| q_\cP(G)  - \frac {a-b}{a+(k-1)b} \left( 1 - \frac {1}{k} \right) \right| \le \frac {\eps}{2}
\end{equation}
and 
\begin{equation}\label{eq:q*}
\Big| q^{\star}(G) - q_\cP(G) \Big| \le \frac {\eps}{2}.
\end{equation}
As a result, 
$$
\left| q^{\star}(G)  - \frac {a-b}{a+(k-1)b} \left( 1 - \frac {1}{k} \right) \right| \le \eps.
$$
\end{theorem}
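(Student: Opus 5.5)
The statement has two parts: the concentration of $q_\cP(G)$ in \eqref{eq:qP}, and the comparison \eqref{eq:q*} between $q^\star(G)$ and $q_\cP(G)$. For \eqref{eq:qP}, the plan is to compute the edge contribution and the degree tax of $\cP$ directly. The number of edges inside the parts is $e_{\rm in}=\sum_i e(P_i)$. It is a sum of $k\binom{n/k}{2}$ independent Bernoulli$(p)$ variables, so it has mean $\approx \omega a n/(2k)$ and variance of the same order. The number of edges across parts, $e_{\rm out}$, has mean $\approx \omega b n(k-1)/(2k)$. Chebyshev then gives $e_{\rm in}=(1+o_p(1))\,\omega a n/(2k)$ and $e_{\rm out}=(1+o_p(1))\,\omega b n (k-1)/(2k)$, with relative error $O((\omega n)^{-1/2})$; this already holds for any $\omega>c$ once $n$ is large. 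Hence
$$q^E_\cP = \frac{e_{\rm in}}{e_{\rm in}+e_{\rm out}}\to \frac{a}{a+(k-1)b}.$$
For the degree tax, $\vol(P_i)=2e(P_i)+e(P_i,V\setminus P_i)$, and each of these sums concentrates the same way, so $\vol(P_i)/\vol(V)\to 1/k$ and $q^D_\cP\to 1/k$. Subtracting,
$$\frac{a}{a+(k-1)b}-\frac1k=\frac{(k-1)(a-b)}{k(a+(k-1)b)},$$
which is the claimed value. A union bound over the $O(k^2)$ concentration events gives error at most $\eps/2$ with probability $1-\eps/2$.

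For \eqref{eq:q*}, the lower bound $q^\star\ge q_\cP$ is trivial. The upper bound is the main obstacle. The difficulty is that $q^\star$ is a maximum over all partitions, including partitions with more than $k$ parts. We cannot simply invoke Theorem~\ref{thm:main_theorem}, which only covers $k$-part partitions within distance $1/k$. I would instead prove a uniform bound: with high probability, every partition $\cA$ satisfies $q_\cA\le q_\cP+\eps/2$. The plan is to write $A_{ij}=\mathbb E A_{ij}+W_{ij}$, where $W$ is the centred adjacency matrix. The spectral norm of $W$ is $O(\sqrt{\omega})$ with high probability. This is where $\omega>c(\eps)$ enters; in the sparse range one needs either regularisation or a Grothendieck-type $\ell_\infty\to\ell_1$ bound $\|W\|_{\infty\to1}=O(n\sqrt{\omega})$, which holds with high probability by a union bound over sign vectors.

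Any partition $\cA$ gives a PSD matrix $X_{uv}={\mathbf 1}\{u,v \text{ in the same part}\}$ with diagonal entries equal to one. By Grothendieck's inequality, $|\langle W,X\rangle|\le K_G\|W\|_{\infty\to1}=O(n\sqrt\omega)$ uniformly over all partitions. Dividing by $2m\approx \omega n(a+(k-1)b)/k$ shows that the random fluctuation of the edge term contributes $O(1/\sqrt{\omega})$, which is below $\eps/4$ once $c$ is large. Similarly, the degrees can be replaced by their expectations $\bar d\approx \omega(a+(k-1)b)/k$. Writing $d_u=\bar d+\Delta_u$ with $\sum_u|\Delta_u|=O(n\sqrt\omega)$ with high probability, the perturbation of $\sum_A\vol(A)^2/\vol(V)^2$ is $O(1/\sqrt\omega)$ uniformly in $\cA$.

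This reduces the problem to maximising the deterministic modularity
$$\frac{1}{2\bar m}\sum_{A}\sum_{u,v\in A}\bigl(\mathbb E A_{uv}-\bar d^2/(2\bar m)\bigr),$$
where $\mathbb E A_{uv}-\bar d^2/(2\bar m)$ equals $\omega(a-\bar a)/n$ within a block and $\omega(b-\bar a)/n$ across blocks, with $\bar a=(a+(k-1)b)/k$. Within-block pairs have positive weight and cross-block pairs negative weight. So, writing $x_{A,i}=|A\cap P_i|$, the objective is $\sum_A\bigl[(a-\bar a)\sum_i x_{A,i}^2+(b-\bar a)\sum_{i\ne j}x_{A,i}x_{A,j}\bigr]$, suitably normalised. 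A direct convexity argument shows this is maximised by $\cP$ itself: splitting a mixed part into its block-pure pieces removes all the negative cross terms, and merging block-pure pieces of the same block increases $\sum_i x_{A,i}^2$. This yields $q^\star\le q_\cP+\eps/2$. The hardest part is the uniform $\infty\to1$ norm control in the sparse regime. If that proves delicate, I would fall back on Theorem~\ref{thm.SBMrecovery}, possibly extended to bounded $\omega>c$, together with a continuity estimate for $q_\cA$ in $d(\cA,\cP)$.
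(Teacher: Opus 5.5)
Your proof is correct, and it takes a genuinely different route from the paper. The paper proves this theorem essentially by citation. It reads off \eqref{eq:qP} as the $d=0$ case of Theorem~\ref{thm:main_theorem}, equivalently from Lemma~\ref{lem:write_mod_as_signature} with $g(Y^{\cP})=k(k-1)$. It takes \eqref{eq:q*} from Bickel--Chen (Theorem~\ref{thm.SBMrecovery}) and \cite{sampling}.

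You instead obtain \eqref{eq:qP} from a first and second moment computation for the single partition $\cP$. This needs none of the uniform machinery behind Lemma~\ref{lem:write_mod_as_signature}, such as Proposition~\ref{prop:from_sampling} and fattening. You then prove \eqref{eq:q*} by comparing $q_\cA(G)$ with the mean-field modularity uniformly over \emph{all} partitions, via $\|W\|_{\infty\to1}=O(n\sqrt{\omega})$. A splitting-and-merging argument then shows that $\cP$ maximises the mean-field objective.

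Your route gains the following over the cited one:
\begin{itemize}
\item It works in the stated regime of bounded $\omega>c(\eps)$. Theorem~\ref{thm.SBMrecovery} assumes $\omega\to\infty$, and \cite{sampling} needs slightly denser graphs.
\item It handles partitions with any number of parts. $\q(G)$ is a maximum over all partitions, but Lemma~\ref{lem:write_mod_as_signature}, Theorem~\ref{thm:main_theorem} and Theorem~\ref{thm.SBMrecovery} only concern partitions into at most $k$ parts.
\item It avoids the extra continuity step needed to turn $d(\arg\max q_\cA(G),\cP)=o(1)$ into closeness of modularity values.
\item It gives an explicit $O(\omega^{-1/2})$ error, so $c(\eps)=O(\eps^{-2})$.
\end{itemize}
The paper's route gains brevity and reuse of its own concentration lemma.

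Two simplifications are available. First, drop the spectral-norm sentence: $\|W\|=O(\sqrt{\omega})$ fails for bounded $\omega$, and you do not use it. Second, Grothendieck's inequality is not needed. Let $A(u)$ be the part containing $u$, let the $\xi_A$ be independent random signs, and set $z_u=\xi_{A(u)}$. For the partition matrix $X$ one has $\langle W,X\rangle=\mathbb{E}_\xi[z^\top W z]$, hence $|\langle W,X\rangle|\le\|W\|_{\infty\to1}$. Similarly, with $\Delta_u=d_u-\mathbb{E}d_u$ and $s_u=\mathrm{sign}(\Delta_u)$, one has $\sum_u|\Delta_u|=s^\top W\mathbf{1}\le\|W\|_{\infty\to1}$. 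So a single high-probability event controls both the edge term and the degree tax, and your fallback to Theorem~\ref{thm.SBMrecovery} is never needed.
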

Note that, as expected, $\q(G)\approx \frac {a-b}{a+(k-1)b} 
 \,h(0)$, where $h(\cdot)$ is defined in Theorem~\ref{thm:main_theorem}.

\section{Ingredients of the proof 
: Distances to the planted partition $\cP$}\label{sec:distance}

For now, for simplicity of exposition, we assume that $n$ is divisible by~$k$, as this will be assumed in the first lemma, Lemma~\ref{lem:dist_as_dplus}. However, in the proof of Theorem~\ref{thm:main_theorem}, we relax this condition and deal with the general case. 

To prove our main result, we need to investigate a family of partitions of $[n]$ with a given distance to the planted partition $\cP = (P_i)_{i=1}^k$. It will be convenient to represent partitions as $k \times k$ matrices that capture the way how these partitions overlap with $\cP$. Let $\cA = (A_i)_{i=1}^k$ be any partition of $[n]$ into $k$ parts. Then, the \emph{signature} of $\cA$ (with respect to $\cP$) is defined as the matrix $X = X(\cA) =(x_{ij})_{1\leq i\leq j \leq k}$, where 
\begin{equation}\label{eq:signature}
x_{ij} = \frac {|A_i \cap P_j|}{n/k}.
\end{equation}
Note that for the signature to be well defined, one needs to fix the labelling of both $\cP$ and $\cA$. To compute the distance between $\cP$ and $\cA$, one needs to align these labels as best as possible (see Lemma~\ref{lem:dist_as_dplus} below), but the modularity score clearly does not depend on the way the labels are aligned (see Lemma~\ref{lem:write_mod_as_signature} below).
Note also that, trivially, $x_{ij} \ge 0$ for all $i,j \in [k]$. Moreover, since $A_1 \cup \cdots \cup A_k = [n]$, for any $j \in [k]$,
\begin{equation}\label{eq:sum1}
\sum_{i \in [k]} x_{ij}  = \frac {1}{n/k} \sum_{i \in [k]} |A_i \cap P_j| = \frac {|P_j|}{n/k} = 1.
\end{equation}
Finally, note that the signature of the planted partition $\cP$ is the matrix $Y^{\cP}=(y_{ij})_{1\leq i \leq j \leq k}$ with $y_{ii}=1$ for all $i \in [k]$ and $y_{ij}=0$ for all $i\neq j$.

In our first lemma, we rewrite the distance between any partition $\cA$ and the planted partition $\cP$ in a convenient form, namely, as a function of the signature of $\cA$. The proof is short so we give it below. 

\begin{lemma}\label{lem:dist_as_dplus}
Suppose that $n$ is divisible by $k$. 
Let $\cA$ be any partition of $[n]$ with $k$ parts, and let $\cP$ be the planted partition.
Let $X=X(\cA)=(x_{ij})_{1\leq i \leq j \leq k}$ be the signature of $\cA$. Then,
\begin{equation}\label{eq:dist_as_dplus}
d(\cA, \cP) = 1-\tfrac{1}{k}\max_{\sigma} \sum_{i=1}^k x_{\sigma(i) i}, 
\end{equation}
where the maximum is taken over all permutations of $[k]$. 

Moreover, $d(\cA, \cP) \leq 1 -1/k$ and this upper bound is sharp, that is, there exists a partition~$\cA$ such that $d(\cA, \cP) = 1 -1/k$, provided that $n$ is divisible by $k^2$. 
\end{lemma}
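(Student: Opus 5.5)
The identity \eqref{eq:dist_as_dplus} is a direct substitution into \eqref{eq:dist_meas}. Since $n$ is divisible by $k$, every planted block has $|P_i| = n/k$, so $|A_{\sigma(i)} \cap P_i| = \frac{n}{k}\, x_{\sigma(i) i}$ by \eqref{eq:signature}. Hence
\[
\frac{1}{n}\sum_{i=1}^k |A_{\sigma(i)}\cap P_i| = \frac{1}{k}\sum_{i=1}^k x_{\sigma(i) i}
\]
for every permutation $\sigma$. Taking the maximum over $\sigma$ on both sides and subtracting from $1$ gives the formula.

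For the upper bound $d(\cA,\cP)\le 1-1/k$, I would use an averaging argument over the $k$ cyclic shifts $\sigma_s(i) = i+s \pmod k$, $s=0,\dots,k-1$. Each ordered pair $(\ell,i)\in[k]^2$ arises as $(\sigma_s(i),i)$ for exactly one $s$. Summing over $s$ and using \eqref{eq:sum1},
\[
\sum_{s=0}^{k-1}\sum_{i=1}^k x_{\sigma_s(i) i} = \sum_{i,\ell} x_{\ell i} = \sum_{i=1}^k 1 = k .
\]
So some $\sigma_s$ satisfies $\sum_i x_{\sigma_s(i) i}\ge 1$. The maximum over permutations is therefore at least $1$, and $d(\cA,\cP)\le 1-1/k$. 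One could instead average over all permutations, but cyclic shifts suffice and avoid counting.

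For sharpness, assume $k^2 \mid n$. I would construct $\cA$ by splitting each $P_j$ into $k$ pieces of size $n/k^2$ and letting $A_i$ be the union of the $i$-th piece of each $P_j$. Then $x_{ij} = 1/k$ for all $i,j$, so every permutation gives $\sum_i x_{\sigma(i) i} = 1$. Hence $d(\cA,\cP) = 1 - 1/k$ exactly.

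There is no real obstacle here. The only point needing care is that the covering of $[k]^2$ by the cyclic shifts is exact, so that the averaging bound is valid.
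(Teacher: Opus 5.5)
Your proposal is correct and matches the paper's proof: the identity comes from direct substitution, the bound $d(\cA,\cP)\le 1-1/k$ from an averaging argument using the unit column sums~\eqref{eq:sum1}, and sharpness from the same uniform construction with $x_{ij}=1/k$. The only difference is that you average over the $k$ cyclic shifts, which cover each position of $[k]^2$ exactly once, instead of over all $k!$ permutations, where each entry appears $(k-1)!$ times; both give an average of exactly $1$.
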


\begin{proof}[Proof of Lemma~\ref{lem:dist_as_dplus}.]
Equality~(\ref{eq:dist_as_dplus}) follows immediately from the definition of the signature in \eqref{eq:signature} and the definition~(\ref{eq:dist_meas}) of the distance between the two partitions. Indeed, 
$$
d(\cA, \cP) = 1 - \frac{1}{n} \max_\sigma \sum_{i=1}^k |A_{\sigma(i)} \cap P_i|  
= 1 - \frac{1}{n} \max_\sigma \sum_{i=1}^k x_{\sigma(i)i} \cdot (n/k)
= 1 - \frac{1}{k} \max_\sigma \sum_{i=1}^k x_{\sigma(i)i}.
$$
It remains to show that $d(\cA, \cP) \le 1 - 1/k$ or, equivalently, that $\max_\sigma \sum_{i=1}^k x_{\sigma(i)i} \ge 1$.

Consider the sum $S = \sum_\sigma \sum_{i=1}^k x_{\sigma(i)i}$, where the outer sum is taken over all $k!$ permutations of $[k]$. Clearly, there are $k! \cdot k$ terms in $S$ and for each $a,b \in [k]$, the term $x_{ab}$ occurs exactly $(k-1)!$ times ($i$~has to be equal to $b$ and there are $(k-1)!$ permutations that map $b$ to $a$). Hence, for each $j \in [k]$, the sum $\sum_{i=1}^k x_{ij}$ (which is equal to 1 by~(\ref{eq:sum1})) occurs $(k-1)!$ times. We conclude that $S = k \cdot (k-1)! = k!$. By an averaging argument, there exists a permutation $\hat{\sigma}$ for which $\sum_{i=1}^k x_{\hat{\sigma}(i)i}\ge 1$. Hence, $\max_\sigma \sum_{i=1}^k x_{\sigma(i)i} \ge \sum_{i=1}^k x_{\hat{\sigma}(i)i} \ge 1$, thus the desired inequality holds.

Assume now that $n$ is not only divisible by $k$ but, in fact, it is divisible by $k^2$. We construct a partition $\cA$ by partitioning each $P_j$ into $k$ equal parts and then picking $1/k$ fraction of each $P_j$ to form $A_i$ of size $n/k$. This partition has signature $X$ with $x_{ij} = 1/k$ for all $i,j \in [k]$. We get that $\max_\sigma \sum_{i=1}^k x_{\sigma(i)i} = \sum_{i=1}^k 1/k = 1$, which shows that the upper bound is sharp. This finishes the proof of the lemma.
\end{proof}

\section{Ingredients of the proof: Concentration}
\label{sec:concentration}

The signature of $\cA$ not only determines the distance between $\cA$ and $\cP$ but, more importantly, it predicts (up to an arbitrarily small error in  a sufficiently dense graph $G$) the modularity score $q_\cA(G)$ of $\cA$ for $G \in \mathcal{G}(n,k,p,q)$ generated by the SBM. Indeed we will prove that the modularity score of $\cA$ is well-approximated by a scaling of $g(X)$, a function only of the signature $X$ of $\cA$ that is defined by
\begin{equation}\label{defn.gx}
g(X) = \sum_{i=1}^k \sum_{1 \le j < j' \le k} (x_{ij} - x_{ij'})^2.
\end{equation}

\begin{lemma}
[\bf Concentration of modularity]\label{lem:write_mod_as_signature}
Fix $a>b>0$ and integer $k\geq 2$. Let $p=p(n)=\omega\; a / n$ and $q=q(n)=\omega \; b/n$. For any $\eps>0$, there exists $c=c(\eps)$ such that if $\omega>c$ for $n$ large enough, then the following holds with probability at least $1-\eps$ provided $n$ is large enough.

Let $G \in \mathcal{G}(n,k,p,q)$. For all $k$-part partitions $\cA$, 
\[ 
\left| q_\cA(G) - \frac{a-b}{(a+(k-1)b)} \cdot \frac {g(X)}{k^2} \right| < \eps, 
\]
where $X = X(\cA) =(x_{ij})_{1\leq i,j \leq k}$ is the signature of $\cA$ and $g(X)$ is defined  in~\eqref{defn.gx}.
\end{lemma}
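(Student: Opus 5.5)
We prove the lemma by writing $q_\cA(G)=q^E_\cA(G)-q^D_\cA(G)$ as in~\eqref{def.qEandqD} and showing that, uniformly over all $k$-part partitions $\cA$, each term is close to a deterministic function of the signature $X=X(\cA)$. We then check that the difference of these two functions equals $\frac{a-b}{a+(k-1)b}\,g(X)/k^2$. Throughout, take $n$ divisible by $k$; the general case changes part sizes by $O(1)$ and is absorbed in the error.

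\emph{Deterministic computation.} Write $s_i=\sum_j x_{ij}$ (so $|A_i|=s_i n/k$). The expected number of edges inside $A_i$ is
\[
\frac{n^2}{2k^2}\Big(p\sum_j x_{ij}^2+q\sum_{j\ne j'}x_{ij}x_{ij'}\Big)+O(np),
\]
and $\mathbb{E}m=\frac{n^2}{2k^2}\big(kp+k(k-1)q\big)(1+o(1))$. Every vertex has expected degree $\frac nk\big(a+(k-1)b\big)\omega/n$, so $\vol(A_i)/\vol(V)\approx s_i/k$. Substituting $p=\omega a/n$ and $q=\omega b/n$, we get
\[
q^E\approx \frac{\sum_i\big(a\sum_j x_{ij}^2+b\sum_{j\ne j'}x_{ij}x_{ij'}\big)}{k(a+(k-1)b)}, \qquad q^D\approx \frac{\sum_i s_i^2}{k^2}.
\]
Using $\sum_{j\ne j'}x_{ij}x_{ij'}=s_i^2-\sum_j x_{ij}^2$ and $\sum_{i,j}x_{ij}=k$, we simplify the difference. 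Combining with the identity
\[
\sum_{j<j'}(x_{ij}-x_{ij'})^2=k\sum_j x_{ij}^2-s_i^2
\]
should yield exactly $\frac{a-b}{a+(k-1)b}\cdot g(X)/k^2$. This is routine algebra, which I would verify carefully.

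\emph{Concentration, uniform over partitions.} There are $k^n$ partitions, so a union bound needs tail bounds of order $e^{-Cn}$. For the edge term, $e(A_i)$ is a sum of independent Bernoullis with mean $\Theta(n\omega)$. Chernoff gives deviation $\eps n\omega$ with probability at most $\exp(-c\eps^2 n\omega)$. This beats $k^n$ once $\omega>c(\eps)$ is a large constant, which is exactly where the hypothesis $\omega>c$ enters. Also $m$ concentrates around its mean to within a factor $1\pm\eps$.

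\emph{Degree term, the main obstacle.} Since $\vol(A_i)=2e(A_i)+e(A_i,V\setminus A_i)$, a union bound over the sets $A_i$ together with Chernoff for $e(A_i,A_i^c)$ gives $|\vol(A_i)-\mathbb{E}\vol(A_i)|\le\eps n\omega$ uniformly, with the same $e^{-c\eps^2n\omega}$ versus $2^n$ tradeoff. Hence $\vol(A_i)/\vol(V)=s_i/k+O(\eps)$, and since $\sum_i(\vol(A_i)/\vol(V))^2$ is Lipschitz in these ratios, $q^D$ is within $O(\eps)$ of $\sum_i s_i^2/k^2$.

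The delicate point is making all approximation errors additive and uniform. This covers the small sets $A_i$ (where relative concentration fails but absolute error $\eps n\omega$ is fine) and the $O(np)$ diagonal corrections. Absolute errors of order $\eps n\omega$, divided by $m=\Theta(n\omega)$, give $O(\eps)$ errors in $q$. Rescaling $\eps$ then completes the proof with probability at least $1-\eps$ for $n$ large.
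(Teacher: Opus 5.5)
Your proposal is correct, but it takes a different route from the paper. The paper separates a deterministic mean-field computation on the weighted graph with $w_{uv}\in\{p,q\}$ (Lemma~\ref{lem:weightedSBMk}) from a cited concentration theorem of~\cite{sampling} (Proposition~\ref{prop:from_sampling}) that compares $q_\cA(G)$ with $q_\cA(w)$. That theorem only covers $\eta$-fat partitions, so the paper also needs two reductions. It uses the amalgamation Lemma~\ref{lem.fattening_w} for partitions with low-volume parts, followed by a brief comparison of $g(X)$ with the signature of the amalgamated partition. It also uses a node-deletion step together with the robustness Lemma~\ref{lem:robustness} to reduce to $k\mid n$.

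You instead prove concentration directly. You use Chernoff/Bernstein bounds for $e(A)$, $e(A,A^c)$ and $m$ with additive error $\eps n\omega$, i.e.\ on the scale of $m$, and a union bound over the $2^n$ vertex subsets. This works once $\omega\ge C(a,b,k)/\eps^2$. Your algebra also checks out: $q^E-q^D\approx \frac{a-b}{k^2(a+(k-1)b)}\sum_i\big(k\sum_j x_{ij}^2-s_i^2\big)$, and the identity $\sum_{i,j}x_{ij}=k$ is not even needed.

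\textbf{What your route buys.} It is self-contained and elementary. Small parts are handled automatically, so there is no fattening step. Divisibility is essentially a non-issue, since $|A_i\cap P_j|=x_{ij}n/k$ holds by definition and only the expected degrees shift, by $O(\omega/n)$. It also gives failure probability $e^{-\Omega(n)}$ with an explicit $c(\eps)=O(\eps^{-2})$.

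\textbf{What the paper's route buys.} It is modular and more general: the cited theorem applies to arbitrary weight functions $w$ and to partitions with any number of parts. Your per-part additive errors accumulate linearly in the number of parts, so your argument genuinely uses that $k$ is fixed.
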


Recall that the planted partition $\cP$ has signature $Y^{\cP}=(y_{ij})_{1\leq i \leq j \leq k}$ with $y_{ii}=1$ for all $i \in [k]$ and $y_{ij}=0$ for all $i\neq j$. Since $g(Y^{\cP}) = k \cdot (k-1)$, we get that $q_\cP (G)$ is very close to $\left(1-\frac {1}{k}\right) \cdot \frac{a-b}{a+(k-1)b}$.

As one might want to use this result for other purposes, we state it (and prove it, of course) without assuming that $n$ is divisible by $k$. The full proof of Lemma~\ref{lem:write_mod_as_signature} can be found in {Appendix~\ref{subsec:write_mod_as_signature},} though we describe some intermediate lemmas below.

One of these intermediate lemmas to prove Lemma~\ref{lem:write_mod_as_signature}, namely Lemma~\ref{lem:weightedSBMk},  concerns the modularity score of partitions $\cA$ with given signature $X$ on a deterministic \emph{weighted graph}. Lemma~\ref{lem:weightedSBMk} can be regarded as a mean-field calculation. Indeed, working with the deterministic weighted graph means we may consider, for example, the edge-weight of edges in part $A_i$ between the planted blocks $P_j$ and $P_{j'}$, which corresponds to the \emph{expected} number of such edges. 
After proving Lemma~\ref{lem:weightedSBMk}, to prove the concentration result Lemma~\ref{lem:write_mod_as_signature}, it remains only to make a link between the modularity scores of our random graph and those of the deterministic weighted graph. We provide both of these proofs in Appendix~\ref{app:remainder_of_proof}.

\section{Ingredients of the proof: Optimisation}
\label{sec:optimisation}

We have now established that, up to small errors, we may consider $g(X)$ as our objective function, and have re-written our distance to the planted partition in terms of $X$. Thus, the new aim is to maximise $g(X)$ given the distance of $X$ to the planted partition $Y^{\mathcal{P}}$.

\begin{lemma}\label{lem:optimisation} Let $0\leq t <1$.
Define $\mathcal{X}_k(t)$ to be the family of matrices $X=(x_{ij})_{1\leq i \leq j \leq k}$ such that
\begin{align}
\nonumber &x_{ij} \geq 0, &\mbox{for all $i,j \in [k]$},\\[0.5em]
\label{eq.def.X.orig} &\sum_i x_{ij}  =  1 &\mbox{for all $j \in [k]$},  \\
\nonumber &\sum_{i\neq j}x_{ij}  = t, \mbox{ and }\\
\nonumber &\sum_{i}x_{ii} \geq \max_{\sigma} \sum_i x_{\sigma(i)i}.
\end{align}
Let $g(X)$ be defined as in~\eqref{defn.gx}. Then,
\[ \max_{X\in \mathcal{X}_k(t)} g(X) = k(k\!-\!1) - 2t \big( k\!-\!t(k\!-\!1)\big)\;\; \mbox{and} \;\; \arg\max_{X\in \mathcal{X}_k(t)} g(X) = \{ X_k^{(ij)}(t) \; : \; i\neq j \}, \] 
where, for $i\neq j$, $X^{(ij)}_k(t)$ is the $k\times k$ matrix with $x_{ij}=t$, $x_{jj}=1-t$, all other diagonal elements $1$ and non-diagonal elements~$0$ (i.e., $x_{aa}=1$ for all $a\neq j$ and $x_{ab}=0$ for all $a\neq b$ such that $(a,b)\neq (i,j)$).
\end{lemma}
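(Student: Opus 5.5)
The plan rests on rewriting $g$ so that its convexity is visible, then using convexity to push all the off-diagonal mass into a single entry. First I would expand each row term. For fixed $i$,
\[
\sum_{1\le j<j'\le k}(x_{ij}-x_{ij'})^2 \;=\; k\sum_{j}x_{ij}^2-\Big(\sum_j x_{ij}\Big)^2 .
\]
Writing $r_i=\sum_j x_{ij}$ for the row sums, this gives
\[
g(X)=k\sum_{i,j}x_{ij}^2-\sum_i r_i^2 .
\]
Each row term is a positive semidefinite quadratic form by Cauchy--Schwarz, so $g$ is convex on $\mathbb{R}^{k\times k}$.

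Next I would check that $\mathcal{X}_k(t)$ is a convex polytope. The constraints $x_{ij}\ge 0$, $\sum_i x_{ij}=1$ and $\sum_{i\ne j}x_{ij}=t$ are linear. The dominance condition is equivalent to the finite family of linear inequalities $\sum_i x_{ii}\ge\sum_i x_{\sigma(i)i}$, one for each permutation $\sigma$. A convex function on a compact polytope attains its maximum at an extreme point, so it suffices to compare vertices. As a sanity check, $X^{(ij)}_k(t)$ is feasible for $t<1$: the only potentially binding comparison is the transposition of $i$ and $j$, which gives $t\le 2-t$. At $X^{(ij)}_k(t)$ we have $\sum x^2=(k-1)+(1-t)^2+t^2$ and $\sum r^2=k+2t^2$, which yields exactly $k(k-1)-2t\big(k-t(k-1)\big)$.

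For the upper bound I would avoid enumerating vertices and argue by exchange. Let $c_j=1-x_{jj}$ be the off-diagonal mass of column $j$, so that $\sum_j c_j=t$. The first step is that, within column $j$, concentrating the off-diagonal mass $c_j$ onto a single row can only increase $g$. The reason is that $g$ is convex along the segment in which mass moves between two off-diagonal entries of the same column, and column sums, $t$ and the diagonal are preserved along it. The dominance constraints must still hold at the endpoint; I would check this using $t<1$, which forces every diagonal entry to exceed $1-t>0$ and every off-diagonal entry to be at most $t$. The second step is to merge mass across columns. With one off-diagonal entry $c_j$ per column, placed in row $\pi(j)\ne j$, $g$ becomes an explicit function of the $c_j$ and the row map $\pi$. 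I would then show that moving mass from column $j$ to column $j'$ (with $\sum c_j=t$ fixed) is again a convex one-parameter move. Hence the optimum has all of $t$ in one column, which is the matrix $X^{(ij)}_k(t)$.

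For the argmax, I would track strictness: $g$ is strictly convex along each nontrivial exchange segment considered, so any maximiser must already be concentrated. This identifies the maximisers as exactly the $X_k^{(ij)}(t)$ with $i\ne j$.

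The main obstacle is the cross-column merge. When two columns place their mass in rows $\pi(j)$ and $\pi(j')$, the $-\sum_i r_i^2$ term couples them: mass in the same row inflates $r_i^2$, whereas mass in different rows lowers it, and the diagonal entries also feed into the $r_i$. I would first try to compute $g$ in closed form as a function of $(c_j)$ and $\pi$, namely
\[
k\sum_j\big((1-c_j)^2+c_j^2\big)-\sum_i\Big(1-c_i+\sum_{j:\pi(j)=i}c_j\Big)^2 .
\]
I would then verify directly that this is maximised on the simplex $\{c\ge 0,\ \sum c_j=t\}$ at a vertex, by checking convexity in each pairwise direction. If the row coupling breaks the simple argument, I would fall back on a case analysis over the functional graph of $\pi$, that is, distinguishing whether $\pi(j)=\pi(j')$, whether $\pi(j)=j'$, and so on.
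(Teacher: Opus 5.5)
Your proposal is essentially correct, and its core coincides with the paper's: $g$ is a positive semidefinite quadratic form and the feasible set is a polytope, so the maximum sits at an extreme point. You diverge by declining to enumerate the vertices, and that makes things harder than they are.

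For $t<1$, every constraint except $x_{ij}\ge 0$ $(i\ne j)$ and $\sum_{i\ne j}x_{ij}=t$ is slack. First, $x_{jj}=1-c_j\ge 1-t>0$. Second, if $\sigma$ moves a set $M$ with $|M|\ge 2$, then $\sum_{a\in M}x_{aa}\ge |M|-t\ge 2-t>t\ge \sum_{a\in M}x_{\sigma(a)a}$. This needs the aggregate bound $\sum_{a\in M}c_a\le t$; the entrywise bounds $x_{aa}\ge 1-t$, $x_{ab}\le t$ that you cite do not suffice for $t$ near $1$. The paper drops the dominance constraints without comment, and this computation is what justifies doing so.

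Hence, in the off-diagonal coordinates, the feasible set is the simplex $\{x\ge 0,\ \sum_{i\ne j}x_{ij}=t\}$, whose vertices are exactly the $X^{(ij)}_k(t)$. Indeed $X=\sum_{a\ne b}(x_{ab}/t)\,X^{(ab)}_k(t)$ for $t>0$, so convexity plus your evaluation of $g(X^{(ij)}_k(t))$ gives the maximum value in one line. This is what the paper does, via counting tight constraints after eliminating the diagonal.

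In particular, the cross-column merge you call the main obstacle is not one. You already showed $g$ is convex on all of $\mathbb{R}^{k\times k}$, so its restriction to any segment is convex, whatever $\pi$ is. The coupling through $\sum_i r_i^2$ affects strictness, not convexity, so neither a closed form nor a case analysis on $\pi$ is needed.

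Your exchange bookkeeping does add something genuine: the $\arg\max$. The paper only shows that the maximum is attained at vertices, which does not exclude other maximisers. Since $g$ is a homogeneous quadratic, $s\mapsto g(X+sD)$ has second derivative $2g(D)$, and this vanishes iff every row of $D$ is constant. Your within-column and cross-column directions each have a row of the form $\pm e_j$ or $e_j+e_{j'}$. Such a row is non-constant when $k\ge 3$, so your strictness claim and the uniqueness conclusion hold for $k\ge 3$. A compact version: for $X=\sum_v\lambda_v V_v$ with the $V_v$ the vertices,
\[
\sum_v\lambda_v g(V_v)-g\big(\textstyle\sum_v\lambda_v V_v\big)=\tfrac12\sum_{v,w}\lambda_v\lambda_w\,g(V_v-V_w),
\]
and $g(V_v-V_w)>0$ for distinct vertices when $k\ge 3$ and $t>0$.

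For $k=2$, however, your strict-convexity claim fails. The only cross-column direction, with $\pi(1)=2$ and $\pi(2)=1$, has constant rows, and in fact $g\equiv 2(1-t)^2$ on all of $\mathcal{X}_2(t)$. So the $\arg\max$ part of the statement is itself false for $k=2$ and $0<t<1$, and your argument, like the lemma, should restrict uniqueness to $k\ge 3$.
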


Note that in the optimiser $X_k^{(i,j)}(t)$, the row sums are $(1+t, 1-t, 1, \ldots, 1)$, corresponding to part sizes $(1/k+d, 1/k-d, 1/k, \ldots, 1/k )\,\cdot n$ in the partition at distance $d<1/k$ with $d=t/k$ (or, thus, equivalently, $t<1$).

The following lemma shows that the maximal $g(X)$ for signatures at distance $d>1/k$ (i.e., for $t>1$) is at most the maximal $g(X)$ for signatures at distance $d=1/k$. This result is important to several of our proofs as it will allow us to infer that partitions $\cA$ with high modularity scores must be at distance $d\leq 1/k$ from the planted partition.

\begin{lemma}\label{lem:opt_far_mod} 
Consider $\mathcal{X}_k(t)$ as defined in~\eqref{eq.def.X.orig}, and $g(X)$ as defined in~\eqref{defn.gx}.
Then, for any $t>1$,
\[ \max_{X\in \mathcal{X}_k(t)} g(X) < k(k-1)-2.\] 
\end{lemma}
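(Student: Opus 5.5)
The plan is to rewrite $g$ in a more tractable form. Then I split according to whether the column-maxima of $X$ sit in distinct rows or not.

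\emph{Rewriting $g$.} For a single row $(x_{i1},\dots,x_{ik})$ with row sum $r_i=\sum_j x_{ij}$ we have the identity $\sum_{j<j'}(x_{ij}-x_{ij'})^2 = k\sum_j x_{ij}^2 - r_i^2$. Hence
\[
g(X)=k\sum_{i,j}x_{ij}^2-\sum_i r_i^2 .
\]
By \eqref{eq.def.X.orig} the column sums equal $1$, so $\sum_i r_i=k$, and Cauchy--Schwarz gives $\sum_i r_i^2\ge k$. Each row term is a positive semidefinite quadratic form (eigenvalue $k$ on the complement of the all-ones vector, $0$ on it), so $g$ is convex. Moreover the constraints defining $\mathcal{X}_k(t)$ are finitely many linear (in)equalities, one for each permutation $\sigma$, so $\mathcal{X}_k(t)$ is a polytope. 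I keep this convexity in reserve as a fallback.

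\emph{Bounding the squares by column maxima.} Let $m_j=\max_i x_{ij}$ and let $\pi(j)$ be a row attaining it. Since the entries of a column are nonnegative and sum to $1$,
\[
\sum_i x_{ij}^2\le m_j\sum_i x_{ij}=m_j .
\]

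\emph{Case 1: $\pi$ is a bijection.} Then $\pi$ is a permutation, and the last constraint of $\mathcal{X}_k(t)$ gives $\sum_j m_j=\sum_j x_{\pi(j)j}\le\sum_i x_{ii}=k-t<k-1$. Therefore
\[
g(X)\le k\sum_j m_j-k<k(k-1)-k\le k(k-1)-2
\]
for $k\ge 2$.

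\emph{Case 2: $\pi$ is not injective.} This is the main obstacle. Some row $i_0$ collects at least two column maxima, and the crude bound $\sum_i r_i^2\ge k$ is then wasteful: $r_{i_0}$ is large, which makes $\sum_i r_i^2$ large. My first approach is to use the sharper lower bound $r_i\ge\sum_{j\in\pi^{-1}(i)}m_j$, together with the fact that each column's off-maximum mass $1-m_j$ contributes at most $(1-m_j)^2$ to $\sum_i x_{ij}^2$. This gives
\[
g(X)\le k\sum_j\bigl(m_j^2+(1-m_j)^2\bigr)-\sum_i\Bigl(\sum_{j\in\pi^{-1}(i)}m_j\Bigr)^2 .
\]
I would then maximise the right-hand side over $m_j\in[1/k,1]$ and over non-injective maps $\pi$. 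The worst case should be two columns merged into one row with all $m_j$ near $1$, which is essentially the decoy signature at $t=1$. There $g=k(k-1)-2$ exactly, and the excess $t>1$ should force a strict loss.

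\emph{Fallback for Case 2.} If this bookkeeping is too lossy, I use convexity. Since $g$ is convex on the polytope $\{X: \text{constraints of } \mathcal{X}_k(t') \text{ for some } t'\ge 1\}$, its maximum is attained at a vertex. Vertices have many zero entries and tight permutation constraints. I would enumerate their structure: columns that are $0/1$ or split between two rows. Then I would check directly that only the $t=1$ decoy vertices achieve $k(k-1)-2$, and that every vertex with $t>1$ gives strictly less. This yields the strict inequality.
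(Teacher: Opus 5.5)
\paragraph{Case 1.} Your Case~1 is correct and neat. If $\pi$ is a permutation, the alignment constraint gives $\sum_j m_j\le\sum_i x_{ii}=k-t<k-1$, hence $g(X)\le k\sum_j m_j-k<k(k-1)-2$.

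\paragraph{Case 2, main route.} The gap is Case~2, which carries the whole difficulty of the lemma. Your main route there cannot work, because the relaxation is too lossy. The bound $\sum_i x_{ij}^2\le m_j^2+(1-m_j)^2$ treats the off-maximum mass as a single entry even when $1-m_j>m_j$. As a result, the right-hand side is not maximised near the decoy. For $m=(1,\dots,1,\tfrac1k,\tfrac1k)$ with $\pi$ merging the last two columns it equals
\[
k(k-1)-2+\frac{4(k-1)}{k^2}>k(k-1)-2 .
\]
This configuration is realised, up to a small perturbation, inside $\mathcal{X}_k(t)$ with $t>1$. Take columns $1,\dots,k-2$ equal to $e_1,\dots,e_{k-2}$, and take columns $k-1,k$ identical and nearly uniform, with their unique maximum in row $k-1$. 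The identity is then an optimal alignment, and $t\approx 2-2/k>1$ for $k\ge 3$. For $k=3$ your bound is about $44/9$ against the target $4$, while the true value is $g\approx 4/3$.

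Moreover, your right-hand side does not involve $t$ at all. At $m\equiv 1$ with two merged columns it equals $k(k-1)-2$ exactly. So the sentence ``the excess $t>1$ should force a strict loss'' is not a step of your argument; it is the content of the lemma itself.

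\paragraph{Case 2, fallback.} The fallback is a plan rather than a proof. Convexity does reduce matters to vertices of $\bigcup_{t'\ge1}\mathcal{X}_k(t')$. Strictness would then follow, since $t'$ is linear, once you know that every maximising vertex has $t'=1$. But this polytope has $k!$ alignment facets, whose normals $I-P_\sigma$ span a space of dimension only $(k-1)^2$. A vertex needs $k^2-k$ independent active constraints beyond the column sums, and the alignment facets together with $t'\ge1$ can supply $(k-1)^2+1$ of them. So only $k-2$ zero entries are forced. Your description of the vertices (columns that are $0/1$ or split between two rows) is therefore unsupported, and the enumeration and comparison of $g$-values is not carried out.

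\paragraph{What is missing.} You need a mechanism that turns $t>1$ into a strict loss. The paper obtains one from a monotone path:
\begin{itemize}
\item In a column with two positive entries, shift mass between them. Along this segment $g$ is a quadratic with leading coefficient $2(k-1)>0$, so some direction strictly increases $g$. Meanwhile the re-aligned distance $1-\frac1k\max_\sigma\sum_i x_{\sigma(i)i}$ varies continuously.
\item If the distance hits $1/k$, the $t=1$ value from Lemma~\ref{lem:optimisation} bounds $g$ by $k(k-1)-2$.
\item Otherwise every column becomes $0/1$ at distance at least $2/k$. Splitting a part made of $r\ge2$ planted blocks then raises its contribution from $r(k-r)$ to $r(k-r)+2(r-1)$, and you repeat until a decoy is reached.
\end{itemize}
Each step is strict, which gives the strict inequality without any vertex enumeration.
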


The optimisation problems in Lemmas~\ref{lem:optimisation} and~\ref{lem:opt_far_mod} are quadratic maximisation problems. (Note that in general quadratic maximisation problems are NP-hard~\cite{sahni1974computationally}.) We prove both lemmas in Appendix~\ref{app:optimisation}. Lemma~\ref{lem:optimisation} is the last ingredient needed to prove Theorem~\ref{thm:main_theorem}, which gives the likely optimal modularity score over all partitions at distance $d\in[0,1/k]$ from the planted partition. (Recall that we will prove our main OGP result, Theorem~\ref{thm:OGP}, in Appendix~\ref{app:proof-OGP} by appealing to Theorem~\ref{thm:main_theorem}.) {\Fc Lemma~\ref{lem:opt_far_mod} is used to show slow mixing of a natural MCMC in Theorem~\ref{thm:slow_mixing_formal}.}

To understand how these lemmas fit in, recall that the distance to the planted partition $\cP$ of a partition $\cA$ with signature $X$ is $d(\cA,\cP)=1-\frac{1}{k}\max_{\sigma}\sum_{i} x_{\sigma(i)i}=\frac{1}{k}\min_{\sigma} \sum_{i\neq j} x_{\sigma(i)j}$, by~Lemma~\ref{lem:dist_as_dplus} and since $\sum_j x_{ij}=1$ for each $i$, so that $x_{\sigma(i)i}=1-\sum_{j\colon j\neq i}x_{\sigma(i)j}$. The condition $\sum_{i\neq j} x_{ij} = t$ corresponds to enforcing a distance of $t/k$ from the planted partition, and the $\arg \max$ tells us the set of partitions that achieve the maximal modularity score at distance $t/k$. Note that, for $t=1$, i.e.\ at distance of $d=1/k$ from the planted partition, $X^{(ij)}_k(1)$ is the signature for the `decoy' partition $\cD$ where two planted blocks $P_i$ and $P_j$ are placed within the same part, and all other planted blocks are placed within their own part -- see also Figure~\ref{fig.planted_and_decoy}.

\section{Discussion and the case of balanced partitions.
}\label{sec.conj}
The literature contains positive results, with algorithms using local updates based on the modularity function to recover communities in the SBM~\cite{cohen2020power,cohen2022massively}. This paper proves OGP, a~signature of algorithmic hardness, for such algorithms. 

Of course this does not give rise to a contradiction, since the setups in~\cite{cohen2020power,cohen2022massively} and ours are subtly different and also OGP has been exhibited for problems known to be easy~\cite{li2024some}. However, it leads to interesting open questions, probing which of the differences in the two setups are important. We describe this now in more detail.

As mentioned earlier, for $k=2$ communities and starting with a random partition into equal-size parts, a local algorithm based on the modularity function was shown to recover the planted communities~\cite{cohen2020power}. The local moves for this algorithm take the form of a \emph{swap}, which takes pairs of vertices, one from each part, and swaps them if this increases the modularity score. This naturally maintains equal-size parts.  
For general $k$, an algorithm with parallel local updates based on the modularity function was shown to recover the ground truth partition~\cite{cohen2022massively}. We note that this parallel algorithm has a random balanced start and also a mechanism to maintain the balanced sizes of parts during the algorithm.

For the overlap gap property that we establish, it was important that the `decoy' partition~$\cD$, which is an {\em unbalanced} partition into $k-1$ parts, has a `surprisingly high' modularity given its distance of $1/k$ from the ground truth partition $\cP$. Furthermore, at distances $0<d<1/k$ from $\cP$ modularity, optimal partitions are increasingly unbalanced, with part sizes $(1/k+d, \; 1/k-d, \; 1/k, \ldots, 1/k )$; see Lemma~\ref{lem:optimisation}.

Perhaps a crucial difference in the two setups is that the positive results of~\cite{cohen2020power, cohen2022massively} had {\em balanced} partitions, both in the initialisation as well as during the algorithm. We finally show that if we introduced this balanced condition into our optimisation problem then it no longer exhibits OGP. In particular, when considering $\max_{\cA} q_{\cA}$ where the maximisation is over \emph{balanced} $k$-part partitions instead of all $k$-part partitions, we see that this no longer exhibits OGP.  
We should note that this by itself does not imply  
{\Fc success of particular greedy algorithms -- and hence fast algorithms.}
More work needs to be done to exhibit one {(as done in~\cite{cohen2020power,cohen2022massively})}. In a similar vain, for the problem of sparse regression~\cite{gamarnik2022sparse}, in the regime where OGP ceases to hold a greedy type algorithm was established to be effective. The construction is not based on OGP. Instead it relies directly on the properties of the model, which is typically the case.

To state this result, we fix the random graph $G \in \mathcal{G}(n,k,p,q)$ under the conditions of Theorem~\ref{thm:main_theorem}. It turns out that the maximum modularity of $G$ only over \emph{balanced} partitions at distance about $0<d<1-1/k$ from the planted partition $\cP$ concentrates about some function that now {\em is} decreasing in~$d$:

\begin{restatable}
{proposition}{propbalanced}\label{prop.balanced}
Let $\mathcal{X}_t'$ be defined as in~\eqref{eq.def.X.orig} with the additional restriction that $\sum_j x_{ij} =1$ for all $i\in[k]$. Then $g^{\rm bal}(t)=\max_{X \in \mathcal{X}_t'}g(X)$ is {\em strictly} decreasing on $0<t<k-1$.
\end{restatable}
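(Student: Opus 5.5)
The plan is to reduce $g$ to a Frobenius norm on the doubly stochastic polytope, and then compare level $t'$ with level $t<t'$ by pulling an optimiser at $t'$ linearly towards the identity.

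\textbf{Step 1 (rewrite $g$).} For $X\in\mathcal{X}'_t$ every row sum equals $1$. Hence
\[
\sum_{j<j'}(x_{ij}-x_{ij'})^2 = k\sum_j x_{ij}^2-\Big(\sum_j x_{ij}\Big)^2 = k\sum_j x_{ij}^2 - 1 ,
\]
and summing over $i$ gives $g(X)=k\|X\|_F^2-k$. So it suffices to show that $N(t):=\max_{X\in\mathcal{X}'_t}\|X\|_F^2$ is strictly decreasing.

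\textbf{Step 2 (nonemptiness and attainment).} The set $\mathcal{X}'_t$ is a compact polytope. For $0\le t\le k-1$ it is nonempty: take $(1-s)I+sJ/k$ with $s=t/(k-1)$, where $J$ is the all-ones matrix. This matrix is doubly stochastic, its off-diagonal mass is $s(k-1)=t$, and its diagonal is a maximum-weight permutation because it is entrywise at least every off-diagonal entry. Hence the maximum is attained.

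\textbf{Step 3 (the key inequality).} For any doubly stochastic $X$ whose diagonal is a maximum-weight permutation,
\[
\|X\|_F^2\le \operatorname{tr}X .
\]
To see this, use Birkhoff--von Neumann to write $X=\sum_\sigma\theta_\sigma P_\sigma$. Then
\[
\|X\|_F^2=\langle X,X\rangle=\sum_\sigma\theta_\sigma\langle P_\sigma,X\rangle=\sum_\sigma\theta_\sigma\sum_i x_{\sigma(i)i}\le \max_\sigma\sum_i x_{\sigma(i)i}=\operatorname{tr}X .
\]
On $\mathcal{X}'_{t'}$ this gives $\|X'\|_F^2\le k-t'$. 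I expect this bound to be the main point; everything else is algebra.

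\textbf{Step 4 (comparison).} Fix $0\le t<t'<k-1$, let $X'$ attain $N(t')$, and set $\lambda=t/t'\in[0,1)$ and $X=\lambda X'+(1-\lambda)I$.

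First, $X\in\mathcal{X}'_t$:
\begin{itemize}
\item $X$ is doubly stochastic, being a convex combination of doubly stochastic matrices.
\item Its off-diagonal mass is $\lambda t'=t$.
\item For any permutation $\sigma$, $\sum_i x_{\sigma(i)i}=\lambda\sum_i x'_{\sigma(i)i}+(1-\lambda)\,\#\mathrm{fix}(\sigma)\le\lambda\operatorname{tr}X'+(1-\lambda)k=\operatorname{tr}X$.
\end{itemize}

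Second, $X$ has larger norm. Expanding, with $N=\|X'\|_F^2$,
\[
\|X\|_F^2=\lambda^2N+2\lambda(1-\lambda)(k-t')+(1-\lambda)^2k .
\]
After dividing by $1-\lambda>0$, the inequality $\|X\|_F^2>N$ is equivalent to $(1+\lambda)N<(1+\lambda)k-2\lambda t'$, that is, $N<k-\tfrac{2\lambda}{1+\lambda}t'$. Step 3 gives $N\le k-t'$. Since $t'>0$ and $\tfrac{2\lambda}{1+\lambda}<1$, we have $k-t'<k-\tfrac{2\lambda}{1+\lambda}t'$, so the required strict inequality holds.

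Therefore $N(t)\ge\|X\|_F^2>N(t')$, which proves that $g^{\rm bal}$ is strictly decreasing on $0<t<k-1$ (indeed on $[0,k-1)$).
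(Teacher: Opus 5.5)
Your proof is correct, and it takes a genuinely different, shorter route than the paper's.

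Both arguments start from the identity $g(X)=k\sum_{i,j}x_{ij}^2-k$ on doubly stochastic matrices. Both also rely on the alignment constraint as the essential input.

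The paper argues locally. It views the off-diagonal part of an optimiser $X^*$ at level $t_2$ as a circulation and decomposes it into weighted simple directed cycles. It then performs a sequence of cycle transfer moves: remove $\varepsilon$ from each edge of the cycle and add $\varepsilon$ to the corresponding diagonal entries. Each move stays in $\mathcal{X}'$, lowers the off-diagonal mass by $|C|\varepsilon$, and raises $\sum x_{ij}^2$ by at least $2|C|\varepsilon^2$. The increase comes from applying the alignment constraint to the cyclic permutation of $C$, which gives $\sum_{i\in V(C)}x_{ii}\ge\sum_{\vec{ij}\in C}x_{ij}$.

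You instead move in one step along the segment towards $I$. You replace the cycle-by-cycle inequality with the global bound $\|X\|_F^2\le \mathrm{tr}\,X$, obtained from Birkhoff--von Neumann plus alignment over all permutations.

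The two constructions are closely related. Your matrix $\lambda X'+(1-\lambda)I$ is exactly what the paper's moves produce if one transfers along every cycle $C_\ell$ of the decomposition with $\varepsilon_\ell=(1-\lambda)w_\ell$. This uses unit row sums: each vertex $i$ gains $(1-\lambda)\sum_{j\ne i}x'_{ij}=(1-\lambda)(1-x'_{ii})$ on the diagonal. So the difference lies in the bookkeeping, not the underlying operation.

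What your version buys:
\begin{itemize}
\item It avoids the circulation decomposition lemma.
\item It avoids choosing a truncation index $L$ and a partial weight $\varepsilon_L$.
\item It explicitly settles nonemptiness of $\mathcal{X}'_t$ and attainment of the maximum, which the paper takes for granted.
\item Your computation gives a clean quantitative gap, $N(t)-N(t')\ge (1-\lambda)^2t'=(t'-t)^2/t'$.
\end{itemize}

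What the paper's version buys: it shows the improvement can be realised by arbitrarily small local moves along single cycles. That fits the paper's narrative about local updates on balanced partitions.
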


Proposition~\ref{prop.balanced} implies an analogue of~Theorem~\ref{thm:main_theorem}, optimising now only over \emph{balanced partitions}, for $0<d<1-1/k$, now with decreasing $\tilde{h}(d),$ contrary to 
$h(d)$. %

\bibliography{articles}

\newpage
\appendix
\section{A guide to the proofs given in the appendix}
\subsection{Main results: OGP for SBM and slow-mixing}

The main results in the paper are that (a) for three or more communities, the stochastic block model (SBM) exhibits the overlap gap property (OGP); and (b) failure of a natural MCMC algorithm. 

For (a), that SBM exhibits OGP, the statement is  Theorem~\ref{thm:OGP}, and is proven in Appendix~\ref{app:proof-OGP}. Proving Theorem~\ref{thm:OGP} requires understanding of the maximum modularity of partitions within distance $d\leq 1/k$ of the planted partition (Theorem~\ref{thm:main_theorem}). Indeed, proving Theorem~\ref{thm:main_theorem} is the main work of the paper, and we outline the steps in Appendix~\ref{subsec:steps_to_prove_max_mod}.  

For (b), the failure of a natural Markov Chain Monte Carlo (MCMC) algorithm, the statement is~Theorem~\ref{thm:slow_mixing_formal} (an informal version, Theorem~\ref{thm:slow_mixing_informal}, appeared in the body of the paper).
This algorithm is described in Appendix~\ref{sec:slow-mixing}, and we show in Proposition~\ref{prop:Planted-dominates} that it will likely output a partition within distance $o(1)$ of the planted partition~$\cP$. However, the main result is a negative one: Theorem~\ref{thm:slow_mixing_formal} shows that the time taken is exponentially large in $n$. In Appendix~\ref{sec:slow-mixing}, we state and prove both Theorem~\ref{thm:slow_mixing_formal} and Proposition~\ref{prop:Planted-dominates}, assuming the OGP result stated in Theorem~\ref{thm:OGP} and a bound on the maximum modularity of partitions at distance $d>1/k$ of the planted partition (Lemma~\ref{lem:opt_far_mod}).

\subsection{Putting it all together: Proving Theorem~\ref{thm:main_theorem} (which proves Theorem~\ref{thm:OGP})}\label{subsec:steps_to_prove_max_mod}

We outline the proof of Theorem~\ref{thm:main_theorem}; in terms of the three ingredients discussed in the main body of the paper: distances to the planted partition (Section~\ref{sec:distance}), concentration (Section~\ref{sec:concentration}) and optimisation (Section~\ref{sec:optimisation}).  

An important definition was that of the signature $X$ of a partition $\cA$; see \eqref{eq:signature}.
The signature of $\cA$ determines the distance between $\cA$ and $\cP$. This was established in Section~\ref{sec:distance}, with all proofs contained within that section. The second and third ingredients, namely, concentration and optimisation, were more involved with results left to be proven in the appendix; see Appendices~\ref{subsec:describe_concentration} and~\ref{sec:figures_and_calcs_for_figures}. The corresponding proofs are in Appendices~\ref{app:concentration} and~\ref{app:optimisation}, respectively.

\subsection{Ingredient: Concentration}\label{subsec:describe_concentration}

We just saw that the distance of partition $\cA$ to the planted partition is determined by the signature of $X$.
Moreover, the signature (up to a small error) defines the modularity score $q_\cA(G)$ of $\cA$ for $G \in \mathcal{G}(n,k,p,q)$ generated by the SBM. This is established by the concentration results in Section~\ref{sec:concentration} which show that for $\cA$ with signature $X$, the modularity score $q_\cA(G)$ is very close to $\frac{a-b}{a+(k-1)b}\,\frac{1}{k^2}\,g(X)$. See~\eqref{defn.gx} for a definition of $g(X)$. 
The proofs for the concentration results may be found in Appendix~\ref{app:concentration}.

\subsection{Ingredient: Optimisation}\label{sec:figures_and_calcs_for_figures}

We are interested in the maximum modularity at a given distance from the planted partition, and the concentration results tell us, loosely, that the modularity score $q_\cA(G)$ is quite close to $\frac{a-b}{a+(k-1)b}\,\frac{1}{k^2}\,g(X)$, where $X$ is the signature of $\cA$. Hence, the remaining challenge is to understand the maximal value of $g(X)$ over signatures $X$ of partitions at a given distance to the planted partition $\cP$. 

We recall the setup for our optimisation problem. We have defined $\mathcal{X}_k(t)$ to be the family of matrices $X=(x_{ij})_{1\leq i \leq j \leq k}$ such that~\eqref{eq.def.X.orig} holds.
Recall from~\eqref{defn.gx} that we have defined $g(X)$ to be 
$g(X) = \sum_{i=1}^k \sum_{1 \le j  < j' \le k}(x_{ij} - x_{ij'})^2$.

The two main optimisation results are to (a) understand $\max_{X\in \mathcal{X}_{k}(t)}g(X)$ for $t\in [0,1]$ (corresponding to $d\leq 1/k$); and (b) upper bound $\max_{X\in \mathcal{X}_{k}(t)}g(X)$ for $t>1$ (corresponding to $d> 1/k$). For part (a), the result is stated in Lemma~\ref{lem:optimisation}, and the proof is in Appendix~\ref{app:optimisation-lemma}. For part (b), the result is stated in Lemma~\ref{lem:opt_far_mod}, and the proof is in Appendix~\ref{subsec:far_mod}.

\subsection{Auxiliary result : Theorem~\ref{thm.close_is_close}}
A result of Bickel and Chen~\cite{bickel2009nonparametric} states that the modularity-optimal partition is likely to be within small distance of the planted partition. We extend this to say that partitions with modularity score very close to optimal are within a small distance of the planted partition. See Theorem~\ref{thm.close_is_close} for details, and Appendix~\ref{app:close_is_close} for proofs.

\subsection{Outline}
We briefly outline the following sections. Appendix~\ref{sec:slow-mixing} proves the failure of MCMC algorithms, Appendix~\ref{app:concentration} proves the concentration results and Appendix~\ref{app:optimisation} proves the optimisation results. After these sections, we prove our main result, that modularity in SBM has OGP, in Appendix~\ref{proof.OGP}. In Appendix~\ref{app:close_is_close}, we prove the  auxiliary result Theorem~\ref{thm.close_is_close}.
We close in Appendix~\ref{app:balanced} by studying balanced partitions and proving Proposition~\ref{prop.balanced}.

\section{Failure of the Markov Chain Monte Carlo algorithm}\label{subsection:slow-mixing}\label{sec:slow-mixing}
Throughout this section we consider $k$-partitions with $k$ fixed.  We denote $q_{\cA}=q_{\cA}(G)$, $q^{\star}=q^{\star}(G)$ for brevity.
The OGP immediately implies the failure of a natural Greedy algorithm for finding the planted partition $\mathcal{P}$ when 
started from the decoy partition $\mathcal{D}$. More specifically, Greedy is an algorithm resulting in a sequence of partitions $\mathcal{A}_t$ built
as follows: We initialise the Greedy algorithm in the decoy partition, i.e., $\mathcal{A}_0=\mathcal{D}$. Given $\mathcal{A}_{t-1}$, the partition $\mathcal{A}_t$ is obtained 
by changing the membership of at most one node $u\in [n]$ such that the resulting modularity strictly increases. 
That is, $\cA_t$ is any partition satisfying
$d(\cA_t, \cA_{t-1})={ 1/n}$ and 
$q_{\mathcal{A}_t}(G)>q_{\mathcal{A}_{t-1}}(G)$. If no such node $u$ exists, then the algorithm stops and outputs the partition obtained
in the final step. By the OGP, the Greedy algorithm, initiated at the decoy partition $\cD$, terminates at this partition with distance at least $1/k$ from the ground truth.

A natural generalisation of the Greedy algorithm, which is guaranteed (as we will show) to output a partition approximately
matching the planted partition $\mathcal{P}$, is the well-known  Markov Chain Monte Carlo (MCMC) algorithm which we now describe. 
Our main result in this section is the proof of slow mixing of this MCMC. In particular, we will show that the time it takes for the algorithm to approximately produce an 
planted partition is exponentially large in $n$. 

We begin by describing the MCMC algorithm. A parameter usually called the {\em inverse temperature} $\beta$ is fixed. The algorithm proceeds as a Markov 
chain moving according to the following rules. Given any partition $\mathcal{A}$, consider any partition $\mathcal{A}'$ with 
$d(\mathcal{A},\mathcal{A}')={ 1/n}$. Then the algorithm moves from $\mathcal{A}$ to $\mathcal{A}'$ with probability proportional
to $\exp(\beta n q_{\mathcal{A}'})$, namely with probability
\begin{align*}
\pr\left(\mathcal{A}\to \mathcal{A}'\right) \triangleq \frac{ \exp(\beta n q_{\mathcal{A}'}) } {\sum_{\mathcal{B}} \exp(\beta n q_{\mathcal{B}})},
\end{align*}
where here the sum is over all partitions $\mathcal{B}$ with $d(\mathcal{A},\mathcal{B})={1/n}$. (Note that we choose this parametrisation so that the stationary distribution is informative.) 
It is known that the unique stationary 
distribution of this chain is the so-called \emph{Gibbs distribution} given by 
\begin{align*}
\pr_{\rm Gibbs}(\mathcal{A})=\frac{\exp(\beta n q_{\mathcal{A}})}{Z},
\end{align*}
where $Z=\sum_{\mathcal{B}} \exp(\beta n q_{\mathcal{B}})$ is the normalising constant, which is also called the {\em partition function}, and now the sum is over \emph{all} partitions $\mathcal{B}$.

We first show that this algorithm is sound, in the sense that for large enough $\beta$ it produces  partitions that are close to the ground truth $\mathcal{P}$.

\begin{proposition}\label{prop:Planted-dominates}
For every $\eps>0,\zeta>0$, there exists large enough $\beta>0$ such that 
\begin{align*}
\sum_{\mathcal{A}\colon  d(\mathcal{A},\mathcal{P}) \le \zeta} \pr_{\rm Gibbs}(\mathcal{A}) \ge 1-\eps.
\end{align*}
\end{proposition}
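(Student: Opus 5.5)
The plan is an energy-versus-entropy comparison. The number of $k$-part partitions of $[n]$ is at most $k^n = e^{n\log k}$. The Gibbs weight of each one is $\exp(\beta n q_\cA)$. So it suffices to show that every partition far from $\cP$ has modularity smaller than $q_\cP$ by a constant gap $\gamma(\zeta)>0$. Then the ratio of the far mass to the weight of $\cP$ alone is at most $k^n e^{-\beta n\gamma}$, which is tiny once $\beta>(\log k+1)/\gamma$. We work throughout on the high-probability event of Lemma~\ref{lem:write_mod_as_signature} and Theorem~\ref{thm:modularity_function}, with accuracy parameter $\eps'$ to be fixed depending on $\zeta$. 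The statement should be read as holding on this event, with probability at least $1-\eps$, for $\omega>c$ and $n$ large.

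\textbf{Step 1: the modularity gap.} Take a $k$-part partition $\cA$ with $d(\cA,\cP)>\zeta$, and relabel its parts by the optimal permutation so that $X=X(\cA)\in\cX_k(t)$ with $t=k\,d(\cA,\cP)>k\zeta$. For $\zeta$ small, i.e.\ $t\le 1$, Lemma~\ref{lem:optimisation} gives
\[
g(X)\le k(k-1)-2t\big(k-t(k-1)\big).
\]
On $[k\zeta,1]$ the bracketed expression is at least $\min\{2k\zeta(k-k\zeta(k-1)),\,2\}$, which is a positive constant $\gamma_0(\zeta)$ for small $\zeta$. For $t>1$, Lemma~\ref{lem:opt_far_mod} gives $g(X)<k(k-1)-2$. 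By compactness of $\cX_k(t)$ over $t\in[1,k-1]$ and continuity of $g$, this strict inequality yields a uniform gap. Combining Lemma~\ref{lem:write_mod_as_signature} with \eqref{eq:qP}, we get $q_\cA\le q_\cP-\frac{a-b}{a+(k-1)b}\gamma_0/k^2+2\eps'$. Choosing $\eps'$ small gives $q_\cA\le q_\cP-\gamma$ with $\gamma>0$ depending only on $\zeta,a,b,k$. The divisibility assumption on $n$ is handled by the approximate signature, up to an $O(1/n)$ error absorbed into $\eps'$.

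\textbf{Step 2: summation.} We have
\[
\sum_{\cA:\,d(\cA,\cP)>\zeta}\pr_{\rm Gibbs}(\cA)\le \frac{k^n e^{\beta n(q_\cP-\gamma)}}{e^{\beta n q_\cP}} = e^{n(\log k-\beta\gamma)}\le \eps
\]
for $\beta$ large and $n$ large. Here the denominator was bounded below by $Z\ge e^{\beta n q_\cP}$.

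\textbf{Main obstacle.} The delicate point is the uniformity of the gap in Step 1 for $t>1$. Lemma~\ref{lem:opt_far_mod} is stated only pointwise, as a strict inequality for each $t$. I would obtain uniformity by compactness. The set $\{X: X\in\cX_k(t) \text{ for some } t\in[1,k-1]\}$ is compact, and at $t=1$ the maximum equals $k(k-1)-2$. Far partitions at $t$ near $1$ are nonetheless covered, because at $t=1$ the value $k(k-1)-2$ already lies below $k(k-1)$ by $2$. So the bound $\max_{t\ge k\zeta}\max_{\cX_k(t)}g\le k(k-1)-\gamma_0$ holds uniformly, and Lemma~\ref{lem:opt_far_mod} is needed only to exclude values $t>1$ exceeding the $t=1$ value. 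The second technical point is that Lemma~\ref{lem:write_mod_as_signature} must hold simultaneously for all partitions. It is stated that way, so the $k^n$ union is already absorbed.
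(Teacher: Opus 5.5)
Your argument is correct and is essentially the paper's proof: a uniform modularity gap $\gamma(\zeta)>0$ for every partition at distance more than $\zeta$ beats the $k^n$ entropy factor once $\beta\gamma>\log k$, with $Z$ bounded below by the weight of a single (near-)optimal partition. The only differences are two. The paper gets the gap by citing Theorem~\ref{thm.close_is_close}, whose proof rests on the same ingredients you inline (Lemmas~\ref{lem:write_mod_as_signature}, \ref{lem:optimisation} and~\ref{lem:opt_far_mod}). And your compactness detour is unnecessary, since the bound $g(X)\le k(k-1)-2$ for all $t>1$ is already uniform and the gap only needs to be measured from $k(k-1)$.
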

In words, the probability mass of partitions with distance at most $\zeta$ from the planted partition constitutes
at least a $1-\eps$ fraction of the total probability mass.
Thus, if the algorithm were to run until stationarity, then a partition sampled according to the Gibbs distribution is likely to be at most $\zeta$ close to the ground truth, modulo an at most $\eps$ likelihood event.

\begin{proof}[Proof of Proposition~\ref{prop:Planted-dominates}]
Given any  $\zeta>0$ let $\mathcal{E}(\zeta)=\lbrace \mathcal{A}\colon d(\mathcal{A},\mathcal{P})\le \zeta\rbrace$.
Fix $\eps,\zeta>0$. By Theorem~\ref{thm.close_is_close}, we can find $\delta>0$ small enough so that, with probability at least $1-\eps/2$,
the event occurs that every $\mathcal{A}$ satisfying $q_{\mathcal{A}}\ge q^{\star}-\delta$ satisfies $\mathcal{A}\in \mathcal{E}(\zeta)$.
On this event,
\begin{align*}
\sum_{\mathcal{A}\notin \mathcal{E}(\zeta)}\exp\left(\beta n q_{\mathcal{A}}\right)
&\le k^n\exp(\beta n (q^{\star}-\delta)).
\end{align*}
Fix $\beta$ large enough so that $\log k -\beta\delta \le -1$ (any strictly negative constant will do). Then, 
this sum is at most $\exp(-n)\exp(\beta n q^{\star})$, so that
\begin{align*}
\sum_{\mathcal{A}\notin \mathcal{E}(\zeta)}\pr_{\rm Gibbs}(\mathcal{A})
&\le
{\exp(-n)\exp(\beta n q^{\star}) \over Z} \\
& \le 
{\exp(-n)\exp(\beta n q^{\star}) \over \exp(\beta n q^{\star})}  \\
&=\exp(-n)\le \eps/2,
\end{align*}
for large enough $n$. Combining this event with the complement event, which occurs with probability at most $\eps/2$, we
complete the proof.
\end{proof}

Our main result, described next, shows that unfortunately the time to stationarity is exponentially large in $n$ when the chain
is initiated at a large enough distance from the partition, appropriately defined. 
Worse than that, the time to reach even \emph{one} partition close to  the ground
truth is exponentially large in $n$, again when the chain is initiated at a large distance from the ground truth. This means that the set of such starting points acts as a \emph{metastable set}. See \cite{BovHol15} for a discussion on metastability, and \cite{CojGalGolRavSteVig23} for an example of metastability in random regular graphs.

In preparation for the proof of this claim, we recap some of the properties implied by the OGP {\Fc and an auxiliary lemma}: there exist $0<\nu_1<\nu_2<1$ and $c_1,c_2>0$ such
that the following holds. Let 
\begin{align*}
\mathcal{E}_{\rm close} &= \lbrace \mathcal{A}\colon d(\mathcal{P},\mathcal{A})\le \nu_1 \rbrace,  \\
\mathcal{E}_{\rm far} &= \lbrace \mathcal{A}\colon  d(\mathcal{P},\mathcal{A})\ge \nu_2 \rbrace,  \\
\mathcal{E}_{\rm btw} &= \lbrace \mathcal{A}\colon  d(\mathcal{P},\mathcal{A})\in (\nu_1,\nu_2) \rbrace .
\end{align*}
Then, {\Fc by Theorem~\ref{thm:OGP} and Lemma~\ref{lem:opt_far_mod}} with probability at least $1-\eps,$ 
\begin{align}
&\arg\max 
q_{\mathcal{A}} 
\subseteq \mathcal{E}_{\rm close}, \label{eq:OGP-implications-1}\\
&\max_{\mathcal{A}: \mathcal{A}\in \mathcal{E}_{\rm far}} q_{\mathcal{A}}\ge q^{\star} - c_1, \label{eq:OGP-implications-2}\\
&\max_{\mathcal{A}: \mathcal{A}\in \mathcal{E}_{\rm btw}} q_{\mathcal{A}}\le q^{\star} - c_1-c_2. \label{eq:OGP-implications-3}
\end{align}

We now state our lower bound result.

\begin{theorem}\label{thm:slow_mixing_formal}
Consider the MCMC initiated at the Gibbs distribution conditioned on being in $\mathcal{E}_{\rm far}$. Namely, suppose
\begin{align*}
\pr\left(\mathcal{A}_0=\mathcal{A}\right) =  
{\pr_{\rm Gibbs}(\mathcal{A})  \over \sum_{\mathcal{B}\in \mathcal{E}_{\rm far}}  \pr_{\rm Gibbs}(\mathcal{B}) },
\end{align*}
for all $\mathcal{A}\in \mathcal{E}_{\rm far}$, and $\pr\left(\mathcal{A}_0=\mathcal{A}\right)=0$ otherwise.
Let 
$\tau=\min\lbrace t\colon  \mathcal{A}_t \in  \mathcal{E}_{\rm close}\rbrace$. There exist $c_3,c_4>0$ such that 
 with probability at least $1-\eps$ (with respect
to the randomness of the graph) 
\begin{align*}
\pr\left(\tau \ge \exp(c_3 n)\right) \ge 1-\exp(-c_4 n),
\end{align*}
where the probability is with respect to the random choices of the MCMC.
\end{theorem}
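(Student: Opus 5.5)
This is the standard bottleneck (conductance) argument for metastability of reversible chains started in stationarity restricted to a set. Throughout I work on the event of probability at least $1-\eps$ on which \eqref{eq:OGP-implications-1}--\eqref{eq:OGP-implications-3} hold. I will use that the chain is reversible with respect to $\pr_{\rm Gibbs}$. Indeed, the one-step neighbourhoods are symmetric, and each transition probability $\pr(\mathcal{A}\to\mathcal{A}')$ has the form $\exp(\beta n q_{\mathcal{A}'})/Z_{\mathcal{A}}$ with local normaliser $Z_{\mathcal{A}}$. So I first check detailed balance, and if the local normalisers spoil it, I replace the dynamics by the Metropolis/heat-bath version with the same stationary law. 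The argument below only uses two facts: every step changes the distance $d(\cdot,\mathcal{P})$ by at most $1/n$, and the chain is stationary when started from $\pr_{\rm Gibbs}$.

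Since $\nu_1<\nu_2$ and each step moves a single node, any trajectory from $\mathcal{E}_{\rm far}$ to $\mathcal{E}_{\rm close}$ must pass through $\mathcal{E}_{\rm btw}$. Hence $\tau\ge \sigma:=\min\{t:\mathcal{A}_t\in\mathcal{E}_{\rm btw}\}$, and it suffices to bound $\pr(\sigma\le T)$ with $T=\exp(c_3n)$.

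To do so, let $\pi_{\rm far}$ be the Gibbs measure conditioned on $\mathcal{E}_{\rm far}$, and couple with the chain started from full stationarity $\pr_{\rm Gibbs}$. For any event $E$ of trajectories,
\[
\pr_{\pi_{\rm far}}(E)\le \frac{\pr_{\rm Gibbs}(E)}{\pr_{\rm Gibbs}(\mathcal{E}_{\rm far})}.
\]
Under stationarity, a union bound gives
\[
\pr(\exists\, t\le T:\mathcal{A}_t\in\mathcal{E}_{\rm btw})\le (T+1)\,\pr_{\rm Gibbs}(\mathcal{E}_{\rm btw}),
\]
so
\[
\pr(\sigma\le T)\le (T+1)\,\frac{\pr_{\rm Gibbs}(\mathcal{E}_{\rm btw})}{\pr_{\rm Gibbs}(\mathcal{E}_{\rm far})}.
\]

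It remains to compare the two Gibbs weights. For the numerator, there are at most $k^n$ partitions, and by \eqref{eq:OGP-implications-3},
\[
\sum_{\mathcal{A}\in\mathcal{E}_{\rm btw}}e^{\beta n q_\mathcal{A}}\le k^n e^{\beta n(q^\star-c_1-c_2)}.
\]
For the denominator, \eqref{eq:OGP-implications-2} gives a single $\mathcal{A}\in\mathcal{E}_{\rm far}$ with weight at least $e^{\beta n(q^\star-c_1)}$. The ratio is therefore at most $\exp\bigl(n(\log k-\beta c_2)\bigr)$. Choosing $\beta$ large enough that $\beta c_2\ge \log k+2c$ for some $c>0$, and setting $c_3=c$, $c_4=c/2$, yields $\pr(\tau< e^{c_3 n})\le 2e^{-cn}\le e^{-c_4n}$ for large $n$, which is the claim.

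The main obstacle is the entropy factor $k^n$. The gap $c_2$ supplied by the OGP is a fixed constant, and a bound of $q^\star-c_1-c_2$ on $\mathcal{E}_{\rm btw}$ beats $\log k$ only after multiplying by $\beta n$. This is why $\beta$ must be large, consistent with the informal Theorem~\ref{thm:slow_mixing_informal}. A secondary point is that \eqref{eq:OGP-implications-3} must hold uniformly over all of $\mathcal{E}_{\rm btw}$, including partitions with fewer parts and distances up to $\nu_2$. This follows from Theorem~\ref{thm:main_theorem} on $[\nu_1,\nu_2]\subset[0,1/k]$, which also covers partitions with fewer than $k$ parts, together with Lemma~\ref{lem:opt_far_mod} if $\nu_2$ is allowed to exceed $1/k$. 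I will take this as given from the recap preceding Theorem~\ref{thm:slow_mixing_formal}.
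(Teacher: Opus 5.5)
This is essentially the paper's proof. You restrict the stationary chain to $\mathcal{E}_{\rm far}$, bound $\pr(\mathcal{A}_t\in\mathcal{E}_{\rm btw})/\pr_{\rm Gibbs}(\mathcal{E}_{\rm far})\le k^n e^{-\beta n c_2}$ via \eqref{eq:OGP-implications-2}--\eqref{eq:OGP-implications-3}, and take a union bound over $t\le e^{c_3 n}$ with $\beta$ large enough to beat the entropy factor $k^n$. Your worry about the local normalisers is well founded: for the chain exactly as defined, the stationary law is proportional to $e^{\beta n q_{\mathcal{A}}}Z_{\mathcal{A}}$, not $\pr_{\rm Gibbs}$, yet the paper's proof takes Gibbs stationarity for granted just as you do, so you match it (switching to Metropolis or heat-bath dynamics would prove the statement for a different chain rather than repair this point).
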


\begin{proof}
We assume that the events (\ref{eq:OGP-implications-1})-(\ref{eq:OGP-implications-3}) hold, which is the case with probability at least $1-\eps$.
We note that the events $\mathcal{A}_0\in \mathcal{E}_{\rm far}, \mathcal{A}_\tau\in \mathcal{E}_{\rm close}$
imply the existence of $s<\tau$ such that $\mathcal{A}_s\in \mathcal{E}_{\rm btw}$.  
For every positive integer $t$,
\begin{align*}
\pr\left( \mathcal{A}_t\in \mathcal{E}_{\rm btw} | \mathcal{A}_0\in \mathcal{E}_{\rm far} \right)  &=
{\pr\left( \mathcal{A}_t\in \mathcal{E}_{\rm btw} , \mathcal{A}_0\in \mathcal{E}_{\rm far} \right) 
\over
\pr\left(\mathcal{A}_0\in \mathcal{E}_{\rm far} \right) } \\
&\le 
{\pr\left( \mathcal{A}_t\in \mathcal{E}_{\rm btw}  \right)
\over
\pr\left(\mathcal{A}_0\in \mathcal{E}_{\rm far} \right) }. 
\end{align*}
We have
\begin{align*}
&\pr\left( \mathcal{A}_t\in \mathcal{E}_{\rm btw}\right)\le  k^n \exp(\beta n (q^{\star}-c_1-c_2)), \\
&\pr\left( \mathcal{A}_t\in \mathcal{E}_{\rm far}\right) \ge  \exp(\beta n (q^{\star}-c_1)).
\end{align*}
Thus the ratio is at most $k^n\exp(-\beta n c_2)$. Assuming $\beta$ is large enough so that $\log k-\beta c_2\le -1$ (again any negative
constant suffices), this ratio is at most $\exp(-n)$. By the union bound, we obtain that $\pr(\tau\le \exp(n/2))\le \mathrm{e}^{n/2}\mathrm{e}^{-n}=\mathrm{e}^{-n/2}$,
completing the proof.
\end{proof}

\section{Proofs of concentration results}\label{app:remainder_of_proof}\label{app:concentration}

In this section we provide the proof of Lemma~\ref{lem:write_mod_as_signature}. We prove this via an intermediate lemma, Lemma~\ref{lem:weightedSBMk}, on the modularity scores of partitions of a {\em weighted} graph corresponding to the SBM, which we prove first, in Appendix~\ref{subsec:weightedSBMk}.

\subsection{Intermediate result on weighted graphs}\label{subsec:weightedSBMk}

The definition for modularity extends naturally to \emph{weighted} graphs, and indeed is often used for clustering  weighted graphs in applications. Suppose that the edge-weights $w$ are such that
$0\leq w_{uv}=w_{vu} \leq 1$ for all nodes $u$ and $v$, and assume that $w$ is not identically zero.  
Define the (weighted) degree of a node $u$ by setting $d_u^{(w)}=\sum_{v\in V} w_{uv} $, the (weighted) volume of a node set $X$ by $\vol_w (X) = \sum_{u \in X} d_u^{(w)}$, and let $e_w(X)= \tfrac12 \sum_{u,v \in X} w_{uv}$ be the total edge-weight. 

For a given partition $\cA$ of $V$, define the \emph{modularity score} of $\cA$ on $w$ by \begin{eqnarray}
q_\cA(w) = 
\frac{1}{\vol_w(V)}\sum_{A\in \cA} \sum_{u,v \in A} \left(w_{uv} -\frac{d^{(w)}_u d^{(w)}_v}{\vol_w(V)}\right) 
= \sum_{A\in \cA} \frac {e_w(A)}{e_w(V)} - \sum_{A\in \cA} \left( \frac {\vol_w(A)}{\vol_w(V)} \right)^2, \label{eq:modularity_weights}
\end{eqnarray}
and the \emph{modularity} of $w$ by $q^{\star}(w)=\max_{\cA} q_{\cA}(w)$.
{\Fc As in the original modularity function, we express the modularity score as the difference between the edge contribution $q^E_\cA(w)$ and the degree tax $q^D_\cA(w)$ -- see~\eqref{def.qEandqD}.}
Note that if the edge weights in $w$ are $\{0,1\}$-valued, then~$q_{\cA}(w)$ is the usual modularity score. 

Given a probability weight function $w$, let $G_{w}$ be the random (unweighted) graph obtained by considering each pair of nodes $u,v$ independently, and including edge $uv$ with probability $w_{uv}$.
Lemma~\ref{lem:write_mod_as_signature} will be proven in the following way. We first calculate the modularity score of a partition with a given signature in a deterministic weighted graph (see also Lemma~\ref{lem:weightedSBMk}). Secondly, we show that the random graph $G \in \mathcal{G}(n,k,p,q)$ is likely to have the property that the modularity score of any $k$-part partition is within a small window of the  corresponding modularity score in the deterministic weighted graph. For the latter we use a concentration result from~\cite{sampling}, and this is done in Appendix~\ref{subsec:write_mod_as_signature}.

\begin{lemma}\label{lem:weightedSBMk}
Fix integer $k \ge 2$. For integer $n \ge k$ that is divisible by $k$, let $P_1 \cup \cdots \cup P_k$ be a partition of $V=[n]$, where $|P_j| = n/k$ for all $j \in [k]$. 
Let $w=w(n, k, p, q)$ be the weight function on node set $V$ with $w_{uv}=p$ if $u$ and $v$ are in the same part $P_j$ for some $j \in [k]$ and with $w_{uv}=q$ otherwise. Then, for all $k$-part partitions $\cA$ with signature $X=X(\cA)=(x_{ij})_{1 \le i,j \le k}$, 
\[
q_\cA(w) = (1+O(1/n)) \, \frac{p-q}{p+(k-1)q} \cdot \frac {g(X)}{k^2},
\] 
where $g(X)$ is defined as in~\eqref{defn.gx}.
\end{lemma}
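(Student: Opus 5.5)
The plan is a direct mean-field computation. Because $w$ is constant on blocks of the planted partition, both the edge contribution $q^E_\cA(w)$ and the degree tax $q^D_\cA(w)$ in \eqref{eq:modularity_weights} can be written exactly in terms of the signature $X$. Write $s=n/k$, and for each part $A_i$ set
\[
R_i=\sum_j x_{ij}, \qquad S_i=\sum_j x_{ij}^2 .
\]
Then $|A_i\cap P_j|=s\,x_{ij}$ and $|A_i|=sR_i$.

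First, the degrees. Every node $u$ has weighted degree $d^{(w)}_u = s p + (k-1)s q$, up to an $O(p)$ correction coming from the diagonal convention (whether $w_{uu}$ is counted). Hence $d^{(w)}_u=s(p+(k-1)q)(1+O(1/n))$ for all $u$. It follows that
\[
\vol_w(V)=n\,s(p+(k-1)q)(1+O(1/n)), \qquad \vol_w(A_i)=sR_i\cdot s(p+(k-1)q)(1+O(1/n)).
\]
Therefore the degree tax is $q^D_\cA(w)=(1+O(1/n))\sum_i R_i^2/k^2$.

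Second, the edge contribution. Summing $w_{uv}$ over ordered pairs $u,v\in A_i$ gives
\[
s^2\Big[p\sum_j x_{ij}^2+q\sum_{j\neq j'}x_{ij}x_{ij'}\Big]
= s^2\big[(p-q)S_i+qR_i^2\big],
\]
again up to $O(s)$ diagonal terms, which is a relative $O(1/n)$ error. Dividing by $\vol_w(V)=ks^2(p+(k-1)q)(1+O(1/n))$ yields
\[
q^E_\cA(w)=(1+O(1/n))\sum_i\frac{(p-q)S_i+qR_i^2}{k(p+(k-1)q)}.
\]

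Third, the combination, where the key identity appears. Putting both terms over the common denominator $k^2(p+(k-1)q)$, the numerator for part $i$ is
\[
k(p-q)S_i+kqR_i^2-(p+(k-1)q)R_i^2=(p-q)\big(kS_i-R_i^2\big).
\]
By the Lagrange identity, $kS_i-R_i^2=\sum_{j<j'}(x_{ij}-x_{ij'})^2$. Summing over $i$ therefore gives exactly $\frac{p-q}{p+(k-1)q}\cdot\frac{g(X)}{k^2}$.

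The only delicate point is the bookkeeping of the $O(1/n)$ errors. These come from diagonal and self-pair terms, and from the difference between the two normalisations $e_w(V)$ and $\vol_w(V)$ in \eqref{eq:modularity_weights} (they differ by a factor of $2$ plus diagonal terms). The errors must be shown to be multiplicative rather than merely additive. Both the edge and tax contributions are ratios whose numerators and denominators are each perturbed by relative $O(1/n)$ amounts, so each term carries a factor $(1+O(1/n))$.

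However, when $g(X)$ is small (for example $X$ with all entries $1/k$, where $g(X)=0$), the two terms nearly cancel, and a common factor $(1+O(1/n))$ on each does not by itself give a multiplicative error on the difference. To handle this, I would either adopt the convention $w_{uu}=0$ and compute the diagonal terms explicitly, or state the error additively as $O(1/n)$, which is all that is needed downstream in Lemma~\ref{lem:write_mod_as_signature}.
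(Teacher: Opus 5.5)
Your computation is the same as the paper's proof. All weighted degrees are equal, so the degree tax is $\sum_i R_i^2/k^2$. The edge term is $\frac1k\sum_i\frac{(p-q)S_i+qR_i^2}{p+(k-1)q}$. Over the common denominator the numerator collapses to $(p-q)(kS_i-R_i^2)$, and the Lagrange identity gives $g(X)$. Your worry about the error term is legitimate, and it applies equally to the paper's proof, which subtracts two $\sim$ relations. However, your first proposed remedy cannot rescue the multiplicative form.

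Take $w_{uu}=0$, the convention implicit in the paper's $d^{(w)}_u=(n-1)q+(n/k-1)(p-q)$ and $\binom{|A_i|}{2}$. The degree tax is still exact, and an exact count of the edge term gives
\[
q_\cA(w)=\frac{n(p-q)\,g(X)-kp\big(k^2-\sum_i R_i^2\big)}{k^2\big(n(p+(k-1)q)-kp\big)} .
\]
For $x_{ij}\equiv 1/k$, which is realisable when $k^2$ divides $n$, this equals $-\frac{(k-1)p}{n(p+(k-1)q)-kp}\neq 0$ while $g(X)=0$. So under that convention the multiplicative statement is false, and only an additive $O(1/n)$ error survives.

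The statement as written sets $w_{uv}=p$ whenever $u,v$ lie in a common $P_j$, and this includes $u=v$. With $w_{uu}=p$, every quantity in your computation is exact: $d^{(w)}_u=s(p+(k-1)q)$, $\vol_w(V)=ks^2(p+(k-1)q)$, and $\sum_{u,v\in A_i}w_{uv}=s^2[(p-q)S_i+qR_i^2]$. You therefore get $q_\cA(w)=\frac{p-q}{p+(k-1)q}\cdot\frac{g(X)}{k^2}$ with no error at all, and nothing delicate remains.

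So either adopt $w_{uu}=p$, in which case the lemma holds with equality, or prove the additive bound. The additive bound is all that Lemma~\ref{lem:write_mod_as_signature} uses.
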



\begin{proof}[Proof of Lemma~\ref{lem:weightedSBMk}.]
To simplify the notation, in this proof we will use $f(n) \sim f'(n)$ to denote that $f(n) = (1+O(1/n)) f'(n)$. 

First, note that the weighted degree is the same for each node, namely, for any $u \in V$, 
$$
d_u^{(w)} = (n-1) q + (n/k-1) (p-q) \sim n \left( q + \frac {p-q}{k} \right) = \frac {n}{k} \Big( p+(k-1)q \Big).
$$
As a result, $\vol_w(V) \sim \frac {n^2}{k} ( p+(k-1)q )$, and so $e_w(V) = \frac {1}{2} \vol_w(V) \sim \frac {n^2}{2k} ( p+(k-1)q )$.

Let us now fix $i \in [k]$ and aim to find an asymptotic value of $e_w(A_i)$. Note that there are two types of contributions to consider, namely, the weight of edges within each planted part $P_j$ and the weight of edges between planted parts $P_j$ and $P_{j'}$. First, note that
$$
|A_i| = \sum_{j=1}^k |A_i \cap P_j| = \frac {n}{k} \sum_{j=1}^k x_{ij}.
$$
(Recall the definition of the signature: $x_{ij} = |A_i \cap P_j| / (n/k)$.) We get that
\begin{eqnarray*}
e_w (A_i) &=& \binom{|A_i|}{2} q + \sum_{j=1}^k \binom{|A_i \cap P_j|}{2} (p-q) \\
&\sim& \frac {1}{2} \left( \left( \frac {n}{k} \sum_{j=1}^k x_{ij} \right)^2 q + \sum_{j=1}^k \left( \frac {n}{k} \, x_{ij} \right)^2 (p-q) \right) \\
&=& \frac {n^2}{2k^2} \left( q \left( \sum_{j=1}^k x_{ij} \right)^2 + (p-q) \sum_{j=1}^k x_{ij}^2 \right), 
\end{eqnarray*}
and so the edge contribution is equal to 
\begin{equation}\label{eq:edge_contr}
\sum_{i=1}^k \frac {e_w (A_i)}{e_w(V)} \sim \frac {1}{k} \sum_{i=1}^k \left( \frac {q}{p+(k-1)q} \left( \sum_{j=1}^k x_{ij} \right)^2 + \frac {p-q}{p+(k-1)q} \sum_{j=1}^k x_{ij}^2 \right).
\end{equation}
On the other hand, again using the fact that the weighted degree is the same for each node, 
$$
\vol_w(A_i) \sim |A_i| \frac {n}{k} \Big( p+(k-1)q \Big) = \left( \frac {n}{k} \sum_{j=1}^k x_{ij} \right) \frac {n}{k} \Big( p+(k-1)q \Big) = \frac {n^2}{k^2} \Big( p+(k-1)q \Big) \sum_{j=1}^k x_{ij},
$$
and so 
$$
\left( \frac {\vol_w(A_i)}{\vol_w(V)} \right)^2 \sim \left( \frac {1}{k} \sum_{j=1}^k x_{ij} \right)^2 = \frac {1}{k^2} \left( \sum_{j=1}^k x_{ij} \right)^2.
$$
We again use $\vol_w(V) \sim \frac {n^2}{k} ( p+(k-1)q )$ to conclude that the degree tax is equal to
\begin{equation}\label{eq:degree_tax}
\sum_{i=1}^k \left( \frac {\vol_w(A_i)}{\vol_w(V)} \right)^2 \sim \frac {1}{k^2} \sum_{i=1}^k \left( \sum_{j=1}^k x_{ij} \right)^2.
\end{equation}
Combining~(\ref{eq:edge_contr}) and~(\ref{eq:degree_tax}), we get that
\begin{eqnarray}
q_\cA(w) &=& \sum_{i=1}^k \frac {e_w (A_i)}{m} - \sum_{i=1}^k \left( \frac {\vol_w(A_i)}{\vol_w(V)} \right)^2 \nonumber \\
&\sim& \frac {1}{k} \sum_{i=1}^k \left( \left( \frac {q}{p+(k-1)q} - \frac {1}{k} \right) \left( \sum_{j=1}^k x_{ij} \right)^2 + \frac {p-q}{p+(k-1)q} \sum_{j=1}^k x_{ij}^2 \right) \nonumber \\
&=& \frac {1}{k^2} \cdot \frac {p-q}{p+(k-1)q} \sum_{i=1}^k \left( k \sum_{j=1}^k x_{ij}^2 - \left( \sum_{j=1}^k x_{ij} \right)^2 \right). \label{eq:almost_modularity}
\end{eqnarray}
Finally we note that 
\begin{eqnarray*}
k \sum_{j=1}^k x_{ij}^2 - \left( \sum_{j=1}^k x_{ij} \right)^2 &=& (k-1) \sum_{j=1}^k x_{ij}^2 - 2 \sum_{1 \le j<j' \le k} x_{ij} x_{ij'} \\
&=& \sum_{1 \le j<j' \le k} \left( x_{ij}^2 - 2 x_{ij} x_{ij'} + x_{ij'}^2 \right) \\
&=& \sum_{1 \le j<j' \le k} \left( x_{ij} - x_{ij'} \right)^2.
\end{eqnarray*}
Using this substitution in~(\ref{eq:almost_modularity}), we get the desired asymptotic value of $q_\cA(w)$ and the proof of the lemma is finished. 
\end{proof}

\subsection{Proof of Lemma~\ref{lem:write_mod_as_signature} - the concentration result}\label{subsec:write_mod_as_signature}
In this section, we present the proof of Lemma~\ref{lem:write_mod_as_signature}.

\label{proof.concentration}
We use the following proposition which follows from Theorem~10.4 and its proof in~\cite{sampling}.
We say a partition $\cA$ is $\eta$--fat for $w$ if $\vol_w(A)\geq \eta \vol_w(V)$ for all parts $A \in \cA$. We write $\cA_{\eta\;  {\rm small}}$ for the set of parts $A\in\cA$ with $\vol(A)<\eta \vol_w(V)$ and call such sets $\eta$-small.%
\begin{proposition}
\label{prop:from_sampling}
Given $\eps>0$, there exists $c=c(\eps)$ such that the following holds. Let $w$ satisfy $e_w(V)\geq c$. Then, with probability at least $1-\eps$, for any partition~$\cA$ which is ($\eps/4$)--fat for $w$,   
\[
|q_\cA(G)-q_\cA(w)| \leq \eps.
\]
\end{proposition}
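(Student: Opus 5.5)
The plan is to split the modularity score into the edge contribution and the degree tax, $q_\cA = q^E_\cA - q^D_\cA$ as in~\eqref{def.qEandqD}, and to compare each term for $G$ and for $w$ uniformly over all partitions. The first observation is that an $(\eps/4)$-fat partition has at most $4/\eps$ parts. So the degree tax $\sum_{A}(\vol(A)/\vol(V))^2$ is a sum of boundedly many terms, and since $x\mapsto x^2$ is $2$-Lipschitz on $[0,1]$, it is enough to control $\vol_G(A)/\vol_G(V)-\vol_w(A)/\vol_w(V)$ to within $\eps^2/32$ for every part $A$. Fatness is used only here.

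The steps, in order, are as follows.

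\textbf{Step 1 (total weight).} Since $e(G)$ is a sum of independent Bernoulli variables with mean $e_w(V)$ and variance at most $e_w(V)$, Chebyshev's inequality gives $|e(G)/e_w(V)-1|\le \eps/16$ with probability at least $1-\eps/4$, once $e_w(V)\ge c$. This also gives $\vol_G(V)=2e(G)\approx \vol_w(V)$.

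\textbf{Step 2 (edge contribution).} Fix a partition $\cA$. Then $\sum_{A\in\cA}e_G(A)$ is a sum of independent Bernoulli variables with mean $\sum_A e_w(A)\le e_w(V)$. By Chernoff's bound, it deviates from its mean by more than $\eps e_w(V)/8$ with probability at most $\exp(-\Theta(\eps^2 e_w(V)))$.

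\textbf{Step 3 (volumes).} For each set $S\subseteq V$, write $\vol_G(S)=2e_G(S)+e_G(S,V\setminus S)$, which is again a sum of independent bounded variables with mean $\vol_w(S)\le 2e_w(V)$. Chernoff's bound gives an additive error $\eps^2 e_w(V)/64$ with probability at least $1-\exp(-\Theta(\eps^4 e_w(V)))$.

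To finish, I would combine Steps 1--3 by a union bound over at most $(4/\eps)^n$ partitions and $2^n$ sets, and then divide by $e(G)$ and $\vol_G(V)$ using Step 1. This gives $|q^E_\cA(G)-q^E_\cA(w)|\le\eps/2$ and $|q^D_\cA(G)-q^D_\cA(w)|\le\eps/2$ simultaneously for all fat $\cA$.

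The main obstacle is exactly this union bound. It needs $\eps^4 e_w(V)\gg n\log(4/\eps)$, i.e.\ the hypothesis must effectively be $e_w(V)\ge c\,n$ (average weighted degree at least a large constant), not merely $e_w(V)\ge c$. I believe the literal version with constant total weight cannot hold, though I have not fully checked this. If $w$ is uniform with total weight $c$, then $G$ is a bounded collection of edges. A $w$-fat partition into $\ell$ equal parts has $q_\cA(w)\approx 0$, yet the same partition can place every edge of $G$ inside a part with unbalanced $G$-volumes, making $q_\cA(G)$ bounded away from $0$.

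I would therefore carry out the argument under $e_w(V)\ge c(\eps)\,n$. This is the regime used for Lemma~\ref{lem:write_mod_as_signature}, where $e_w(V)=\Theta(\omega n)$ with $\omega>c$. If a genuinely sublinear version is wanted, I would first try replacing the crude union bound by a cut-norm/Grothendieck-type bound, i.e.\ the spectral concentration of $G-w$ used in~\cite{sampling}, which controls $\sup_{S,T}|e_G(S,T)-e_w(S,T)|$ simultaneously for all sets.
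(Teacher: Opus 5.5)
Your diagnosis is right, and it is the main point. As literally stated, with only $e_w(V)\ge c$, the proposition is false. The hypothesis actually needed is $e_w(V)\ge c(\eps)\,|V|$, and this is what holds in the only place the proposition is used, Lemma~\ref{lem:write_mod_as_signature}, where $e_w(V)\approx \frac{n\omega}{2k}(a+(k-1)b)$.

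Here is a clean version of your counterexample. Take $w\equiv\rho$ on all pairs of $[n]$, with $e_w(V)=C\ge\max(c,2)$ fixed and $n\to\infty$. With probability tending to $1-e^{-C}(1+C)>4/9$, the graph $G$ is a matching of $m\ge 2$ edges. Choose two halves of sizes $\lfloor n/2\rfloor$ and $\lceil n/2\rceil$ containing $\lfloor m/2\rfloor$ and $\lceil m/2\rceil$ of these edges entirely. This partition is $\eps/4$-fat for $w$ and has $q_\cA(w)=O(1/n)$, while $q_\cA(G)=1-(\lfloor m/2\rfloor^2+\lceil m/2\rceil^2)/m^2\ge 4/9$. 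So the conclusion fails for every $\eps<4/9$.

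Note that the edges must be \emph{spread} over the parts. If you put all of them in one part, then $q_\cA(G)=0$, so ``unbalanced $G$-volumes'' is the wrong description. The same packing works for uniform $w$ with $e_w(V)=\delta n$ and $\delta<1/2$: this is the subcritical regime, where all components have size $O(\log n)$. So linear order with a large constant is necessary, not merely sufficient, and no cut-norm argument can rescue a sublinear version.

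Under the corrected hypothesis your proof is correct. The paper gives no argument beyond pointing to Theorem~10.4 of~\cite{sampling}. Your Chebyshev/Chernoff estimates, combined with union bounds over $(4/\eps)^n$ partitions and $2^n$ sets, are a self-contained replacement. Two bookkeeping fixes are needed.

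\textbf{Step~1 precision.} Step~1 at precision $\eps/16$ does not give the per-part bound $\eps^2/32$ you said suffices. The normalisation error for part $A$ is about $(\eps/16)\,\vol_G(A)/\vol_G(V)$. Either run Chebyshev at precision $\eps^2/64$, or sum over parts using $\sum_A\vol_G(A)/\vol_G(V)=1$. The latter bounds the degree-tax difference by $2(\eps/16+\eps/32)<\eps/2$.

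\textbf{Use of fatness.} Fatness is also used in Step~2, not only in the degree tax. The union bound there is affordable only because fat partitions have at most $4/\eps$ parts; over all partitions you would need $e_w(V)\gg n\log n$.

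Finally, choose $c(\eps)$ so that the exponents $\Theta(\eps^2cn)$ and $\Theta(\eps^4cn)$ beat $n\log(8/\eps)$ plus a constant. Then the total failure probability is at most $\eps$ for every $n$.
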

It will also be useful to use the next lemma, 
which allows us to restrict our attention only to these $\eta$-fat partitions where all parts have large volume. See Lemma~3.2 and its weighted counterpart in~\cite{sampling} for the definition of the amalgamating algorithm, and results on it.   
\begin{lemma}\label{lem.fattening_w}
For each $w$ on $E$, and each $0<\eta \leq 1$, there is an $\eta$-fat partition $\cB$ of $V$ such that $q_{\cB}(w) > \q(w) - 2\eta$. Indeed, given any partition $\cA$ of~$V$, the greedy amalgamating algorithm uses a linear number of operations and constructs an $\eta$-fat partition $\cB =\cA(G,\eta,\cA)$ such that\[ 2\zeta < 
q_{\cA}(w) -  q_{\cB}(w) < 2\eta,\]
where $\zeta = \sum_{A\in \cA_{\eta \; \rm{small}}} \vol_w(A)/\vol_w(V)$. 
\end{lemma}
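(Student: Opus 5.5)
This is a deterministic statement about weighted modularity. I would prove it with the greedy amalgamating procedure from \cite{sampling}, using the two-term decomposition $q_\cA(w)=q^E_\cA(w)-q^D_\cA(w)$.

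\textbf{Existence of a fat near-optimal partition.} The first claim follows from the second. Take $\cA$ to be a modularity-optimal partition, so $q_\cA(w)=\q(w)$; such a partition exists because $V$ is finite. Amalgamating it gives an $\eta$-fat $\cB$ with $q_\cB(w)>\q(w)-2\eta$. The whole proof is therefore an analysis of the amalgamating algorithm.

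\textbf{The algorithm.} While some part is $\eta$-small and at least two parts remain, merge the two parts of smallest volume. Call the merged parts $A$ and $A'$ and set $\alpha=\vol_w(A)/\vol_w(V)$ and $\alpha'=\vol_w(A')/\vol_w(V)$.
\begin{itemize}
\item Merging cannot decrease the edge contribution, since $e_w(A\cup A')\ge e_w(A)+e_w(A')$.
\item The degree tax increases by exactly $(\alpha+\alpha')^2-\alpha^2-\alpha'^2=2\alpha\alpha'$.
\end{itemize}
Hence a single merge costs at most $2\alpha\alpha'$ in modularity. Each merge reduces the number of parts by one, so there are at most $|\cA|-1\le n$ merges, which gives the linear operation count.

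\textbf{Bounding the total loss by $2\eta$.} This bookkeeping is the main obstacle. The idea is to charge each merge to volume that was small at the time. Run the procedure so that a small part is always merged with another part that is also small, while one exists. The merged part $A\cup A'$ has volume fraction below $2\eta$. The cost $2\alpha\alpha'$ is at most $2\eta\cdot\min(\alpha,\alpha')$, and I would try to charge it to the smaller part. Track the volume of each original small part as it is absorbed; the total charged volume should be at most the total small volume, at most $1$. Summing then gives a loss of at most $2\eta\sum(\text{charged volume})<2\eta$.

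There is a risk here: the same original volume could be charged repeatedly as merged blobs keep merging. To avoid this, I would charge each merge to the smaller blob only, a "smaller-into-larger" argument. Doubling makes the total charge geometric, but that alone could leave an extra factor. I would therefore tighten the charging so that each unit of volume pays only once, at the final merge that makes its blob fat. If that fails, I would fall back on the potential $\Phi=\sum_{A\text{ small}}\vol_w(A)^2/\vol_w(V)^2$ and show that the loss is bounded by the final degree tax of the newly created fat parts, each of which has volume fraction at most $2\eta$. The case where only one small part remains must be handled separately: merge it into the smallest fat part, which costs $2\alpha\beta\le 2\eta\beta$.

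\textbf{Lower bound $2\zeta$.} I expect this to come from the edge term. Let $\zeta$ denote the initial total small volume fraction. I would aim to show that the amalgamation loses at least $2\zeta$, presumably through the degree-tax increase being at least twice the product of each small part's fraction with a partner's fraction. I am unsure of this and would check it against the precise statement of Lemma 3.2 in \cite{sampling}, adapted to weights verbatim. Since only the upper bound is needed elsewhere in the paper, I would prioritise the $<2\eta$ direction.
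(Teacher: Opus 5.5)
Your proposal has a genuine gap, exactly at the step you flagged. The paper does not prove this lemma itself (it imports it from \cite{sampling}), so your argument has to stand on its own. The problem is that the rule ``always merge the two smallest parts'' does \emph{not} satisfy $q_\cA(w)-q_\cB(w)<2\eta$, so no charging scheme can complete the argument.

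Here is a counterexample. Let $w$ be the indicator of a perfect matching on $12$ vertices, let $\cA$ be the six matched pairs, and take $\eta=2/5$. Each part has volume fraction $1/6$. Your rule produces three parts of fraction $1/3<\eta$, then $\{2/3,1/3\}$, then $\{V\}$. The loss is therefore $q_\cA(w)-0=1-6\cdot(1/6)^2=5/6>4/5=2\eta$.

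The small--small merges are not the problem, and your repeated-charging worry is moot. The total cost of building a blob $F$ telescopes to $f^2-\sum_{A\subseteq F}a^2<f^2<2\eta f$, since $f<2\eta$, so that phase costs at most $2\eta\zeta$. The failure is in the last merge, which you treated as routine. Absorbing the lone small blob $X$ into $Y$ costs $2xy$. This fits into the remaining budget $2\eta(1-\zeta)$ only if $y\le 1-\zeta$, i.e.\ only if $Y$ was already fat in $\cA$. Your rule may pick a freshly built blob instead. If $\cA$ has no fat part at all, no admissible target exists, which is exactly what happens in the example above.

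When $\cA$ has a fat part, sending the leftover there repairs the greedy bound: the total is less than $2\eta\zeta+2\eta(1-\zeta)=2\eta$. The general statement needs a different idea. One option is to take, among all $\eta$-fat coarsenings of $\cA$ (at least $\{V\}$ qualifies), a $\cB$ minimising $\sum_B\vol_w(B)^2$.
\begin{itemize}
\item A part of fraction $b<2\eta$ raises the degree tax by $b^2-\sum_{A\subseteq B}a^2<2\eta b$.
\item Now let $b=2\eta+\delta$ with $\delta\ge0$. Minimality forbids splitting $B$ into two fat pieces, so no sub-collection of its $\cA$-parts has total fraction in $[\eta,\eta+\delta]$.
\item List those parts as $a_1\le\dots\le a_m$ with prefix sums $s_r$. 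There must be an $r$ with $s_r<\eta$ and $a_{r+1}>\delta+s_r$.
\item Otherwise, let $s_j$ be the first prefix sum $\ge\eta$. An induction on $r$ shows that every $x\in[0,s_j]$ has a subset sum in $[x,x+\delta]$. Taking $x=\eta$ then gives a forbidden sum.
\item Hence $\sum_{A\subseteq B}a^2\ge a_{r+1}(b-s_r)>(\delta+s_r)(b-s_r)\ge\delta b$. So again the increase is less than $2\eta b$.
\end{itemize}
Summing over parts, and using that coarsening never lowers the edge contribution, gives $q_\cB(w)>q_\cA(w)-2\eta$. This proves the existence claim, but not the linear-time greedy claim.

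Finally, do not try to prove the lower bound $2\zeta<q_\cA(w)-q_\cB(w)$: it is false. If $\cA$ is already $\eta$-fat, then $\zeta=0$ and $\cB=\cA$, which gives $0<0$. More generally, every coarsening satisfies $q_\cA-q_\cB\le q^D_\cB-q^D_\cA<1$, so the bound fails whenever $\zeta\ge 1/2$.
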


Note that $\zeta$ is the proportion of the volume contained in low-volume parts of $\cA$.
The following robustness result will allow us to ignore any small proportion of edges~\cite{sampling}.
\begin{lemma}\label{lem:robustness}
Let $H=(V,E)$ be a graph and let $\cA$ be a partition of $V$.
If $E_0$ is a non-empty proper subset of $E$, and $H'=(V, E \setminus E_0)$, then 
\[ |q_\cA(H)- q_\cA(H')| \; 
< \frac{2 \, |E_0|}{|E|}. \]
\end{lemma}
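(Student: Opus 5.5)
We prove the bound by viewing $H$ and $H'$ as sharing one partition $\cA$, and bounding separately the change in the edge contribution and in the degree tax; see~\eqref{def.qEandqD}. Write $m=|E|$, $m_0=|E_0|$, $m'=m-m_0\ge 1$ and $\alpha=m_0/m\in(0,1)$. We may assume $\alpha<1/2$; otherwise the claim is trivial, because modularity scores lie in $[-1/2,1]$. That range would give $|q_\cA(H)-q_\cA(H')|\le 3/2$, which is not below $2\alpha$ when $\alpha\ge 3/4$. So a little more care is needed there, and I would treat that case at the end.

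For the edge contribution, set $e(A)$ and $e'(A)$ for the edge counts inside $A$ in $H$ and $H'$. Write $E_{in}=\sum_A e(A)$ and $E'_{in}=\sum_A e'(A)$, so that $E_{in}-m_0\le E'_{in}\le E_{in}$. Then
\[
\frac{E'_{in}}{m'}-\frac{E_{in}}{m}=\frac{mE'_{in}-m'E_{in}}{mm'}=\frac{m(E'_{in}-E_{in})+m_0E_{in}}{mm'}.
\]
The numerator lies in $[-m m_0+m_0E_{in},\, m_0E_{in}]$, and $0\le E_{in}\le m$. Hence the absolute value is at most $\frac{m_0\max(E_{in},m-E_{in})}{mm'}$. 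This only gives $\alpha/(1-\alpha)$, so I will argue more sharply using the constraint $E_{in}-E'_{in}\le m_0$ jointly with the degree tax below.

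For the degree tax, write $\vol'(A)=\vol(A)-\delta_A$, where $\delta_A\ge0$ and $\sum_A\delta_A=2m_0$. Then
\[
\sum_A\frac{\vol(A)^2}{4m^2}-\sum_A\frac{\vol'(A)^2}{4m'^2}
\]
is controlled by viewing the volume shares $x_A=\vol(A)/2m$ and $y_A=\vol'(A)/2m'$ as two probability vectors. We have $\|x-y\|_1\le 2\alpha$, since $x$ is a mixture $(1-\alpha)y+\alpha z$ with $z_A=\delta_A/2m_0$. Therefore
\[
\sum_A x_A^2-\sum_A y_A^2=\sum_A (x_A-y_A)(x_A+y_A),
\]
and the mixture form gives it as $\alpha\,\langle z-y,\,x+y\rangle$. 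This lies in $(-2\alpha,2\alpha)$ because the entries of $x+y$ are in $[0,2]$. In the same mixture spirit, the edge contribution satisfies $E_{in}/m=(1-\alpha)E'_{in}/m'+\alpha\, s$, where $s\in[0,1]$ is the fraction of $E_0$ that lies inside parts. So that difference equals $\alpha(s-E'_{in}/m')$, with absolute value at most $\alpha$.

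Combining the two pieces, $q_\cA(H)-q_\cA(H')=\alpha\bigl[(s-c')-(\langle z,x+y\rangle-\langle y,x+y\rangle)\bigr]$, where $c'=E'_{in}/m'$. The main obstacle is getting the strict constant $2$ rather than $3$. I would expand $\langle y,x+y\rangle-\langle z,x+y\rangle$ using $x=(1-\alpha)y+\alpha z$. This gives $(2-\alpha)\|y\|^2-(2-\alpha)\langle y,z\rangle-\alpha\|z\|^2$, which lies in $(-2,2)$. I would then pair the terms $s$ with $\|z\|^2$ and $c'$ with $\|y\|^2$. The inequalities $s\le\|z\|^2$-type bounds are not available in general, so instead I would use the crude facts that $s-c'\in[-1,1]$ and that the bracket's degree part lies in $[-2+\alpha,\,2-\alpha]$. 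I would then check that the signs cannot both be extreme. The degree part near $+2$ forces $y$ concentrated on one part, which makes $c'$ close to $1$ and $s-c'\le 0$; symmetric reasoning handles the other extreme. This case check yields a total strictly below $2$, hence the bound $2m_0/m$.
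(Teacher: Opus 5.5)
The paper gives no proof of this lemma; it imports it from the literature, so your argument has to stand on its own. Your setup is sound. The identities $x=(1-\alpha)y+\alpha z$ and $E_{in}/m=(1-\alpha)c'+\alpha s$ are correct, and they give exactly $q_\cA(H)-q_\cA(H')=\alpha\,[(s-c')+D]$ with $D=\langle y-z,x+y\rangle$.

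\textbf{The gap.} The step that is supposed to produce the constant $2$ is not a proof. From $s-c'\in[-1,1]$ and $D\in[-2+\alpha,2-\alpha]$ you only get $|(s-c')+D|\le 3-\alpha$. Saying that both terms cannot be extreme at once only excludes values near $3-\alpha$. Nothing in your argument excludes intermediate combinations such as $D=3/2$ together with $s-c'=3/5$, so the strict bound below $2$ is not established. Some details are also off:
\begin{itemize}
\item The correct expansion is $D=(2-\alpha)\|y\|^2-2(1-\alpha)\langle y,z\rangle-\alpha\|z\|^2$; the coefficient of $\langle y,z\rangle$ is $-2(1-\alpha)$, not $-(2-\alpha)$.
\item Concentration of $y$ makes $c'$ close to $1$, but it does not force $s-c'\le 0$.
\item The ``symmetric'' lower extreme $D\approx-2+\alpha$ never occurs. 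That side needs a different argument altogether.
\end{itemize}

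\textbf{The missing ingredient} is a quantitative coupling between $c'$ and $\|y\|^2$, plus a sharper lower bound on $D$.

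For the upper side, let $A^*$ maximise $y_A$. Every edge of $H'$ joining two different parts has an endpoint outside $A^*$. Hence $(1-c')m'\le \vol'(V\setminus A^*)=2m'(1-y_{A^*})$, so $1-c'\le 2(1-\max_A y_A)\le 2(1-\|y\|^2)$. Dropping the nonpositive $\langle y,z\rangle$ term then gives
\[
(s-c')+D\;\le\; 2(1-\|y\|^2)+(2-\alpha)\|y\|^2-\alpha\|z\|^2\;=\;2-\alpha\big(\|y\|^2+\|z\|^2\big)\;<\;2 ,
\]
where strictness uses $\alpha>0$ and $E_0\neq\emptyset$.

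For the lower side, convexity gives $\|x\|^2\le(1-\alpha)\|y\|^2+\alpha\|z\|^2$. Therefore $D=(\|y\|^2-\|x\|^2)/\alpha\ge\|y\|^2-\|z\|^2\ge \|y\|^2-1>-1$, since $\|y\|^2>0$ (as $m'\ge 1$). Hence $(s-c')+D>-2$.

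Together these give $|q_\cA(H)-q_\cA(H')|<2\alpha$ for every $\alpha\in(0,1)$. Your preliminary reduction to $\alpha<1/2$ can therefore be dropped; as you noticed, the range $[-1/2,1]$ does not justify it. The deferred treatment of large $\alpha$ becomes unnecessary as well.
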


Now, we are ready to prove Lemma~\ref{lem:write_mod_as_signature}.

\begin{proof}[Proof of Lemma~\ref{lem:write_mod_as_signature}]
Clearly, with probability at least $1-\eps/10$, there are $k-1$ nodes of degree at most $2ca = O(1)$. Removing at most $k-1$ of such nodes changes the volume (and so also the number of edges) by $O(1)$. Hence, by Lemma~\ref{lem:robustness}, for any partition $\cA$, removing such nodes changes the modularity score by $O(1/n) \le \eps/3$. 
We may then assume that $n$ is divisible by $k$. 

Suppose $G$ and $\eps>0$ are such that in Proposition~\ref{prop:from_sampling}, $|q_\cA(G)-q_\cA(\omega)|<\eps/5$ for any $\eps/20$-fat partition with probability at least $1-\eps/5$.
Take an arbitrary $k$-part $\cA$. Then, it will suffice to show that 
\[ 
\left| q_\cA(G) - \frac{a-b}{(a+(k-1)b)} \cdot \frac {g(X)}{k^2} \right| < \eps, 
\]
where $X = X(\cA) =(x_{ij})_{1\leq i,j \leq k}$ is the signature of $\cA$ and $g(X)$ is as defined in~\eqref{defn.gx}.

Let $\eta=\eps/8(k-1)$. The easy case is that $\cA$ is an $\eta$-fat partition for $\omega$. Then, by Proposition~\ref{prop:from_sampling}, with probability $1-\eps/10$, we have $|q_\cA(G)-q_\cA(w)|\leq \eps/10$, but then, by Lemma~\ref{lem:weightedSBMk}, we are done.

Now suppose that $\cA$ is not an $\eta$-fat partition of $w$. 
Note that at most $k-1$ parts may have volume at most $\eta \, \vol_w(V)$ since $\eta<1/k$. Thus, by Lemma~\ref{lem.fattening_w} there is an $\eta$-fat partition $\cB$ such that
\begin{equation}\label{eq.det_mod_close}  
2(k-1)\eta \leq q_\cA(w) - q_{\cB}(w) \leq 2\eta. 
\end{equation}
Hence by our choice of $\eps$, $|q_\cA(w) - q_{\cB}(w)|\leq \eps/4$.
Again, by Proposition~\ref{prop:from_sampling}, we have $|q_{\cA}(G) - q_{\cB}(w)|\leq \eps/4$. 
Note that~\eqref{eq.det_mod_close} together with Lemma~\ref{lem:weightedSBMk} implies that that $|g(X)-g(X_B)|$ is sufficiently small, where $X_B$ corresponds to $\cB$ and $X$ corresponds to $\cA$. Then, a sequence of triangle inequalities gives us the required bound.
\end{proof}

\section{Optimisation lemmas for modularity at a given distance}
\label{app:optimisation}

In this section we prove that the function $g(X)$ has the required behaviour. Recall that by Theorem~\ref{thm:main_theorem} with probability close to one, the modularity of any partition $\cA$ with signature $X$ can be well approximated by $\frac{a-b}{a+(k-1)b}\,\frac{1}{k^2}g(X)$. 

In particular, we are interested in the maximum modularity score for partitions at a given distance $d$; and this we can control by understanding the maximum of $g(X)$ for signatures $X$ at distance $d$; or, equivalently, the maximum of $\frac{1}{k^2}\,g(X)$, which we call $\tilde{h}(d)$. (See also the discussion after Theorem~\ref{thm:main_theorem}.) This will correspond to the maximisation problem $\tilde{h}(d)=\frac{1}{k^2}\,\max_{X \in \mathcal{X}_k(dk)} g(X)$ for the $k$-block SBM, and re-scaled distance $t=dk$.

These results naturally split into two. For $d<1/k$, we solve this optimisation in Lemma~\ref{lem:optimisation}, while, for $d>1/k$, we show in Lemma~\ref{lem:opt_far_mod} that $h(d)<h(1/k)$, i.e., that the value of $g$ is bounded above by its maximum at $d=1/k$.

\subsection{Proof of the general case for distance $d<1/k$}
\label{app:optimisation-lemma}

\begin{proof}[Proof of Lemma~\ref{lem:optimisation}] Let $t=dk$ and consider first $0<t<1$. We will prove the result by successively eliminating the variables.

We begin by giving an equivalent optimisation problem with $k(k-1)$ variables. Since the columns of the matrix $X$ sum to $1$, we may reduce the number of variables by writing $x_{ii}=1-\sum_{i':i'\neq i} x_{i'i}$, and maximising the resulting $\tilde{g}(x_{12}, \ldots, x_{k-1,\; k})$ obtained from $g(X)$ by making these substitutions for $x_{ii}$. 

In particular, let
\begin{equation}
\widetilde{\mathcal{X}}=\big\{(x_{ij})_{i\neq j}
~\colon~ x_{ij}\geq 0 \; \forall i\neq j, \; \sum_{i\colon i\neq j}x_{ij}\leq 1 \; \forall j \; \quad\mbox{and} \quad\;
\label{eq.small_d.X}
\sum_{i\ne j} x_{ij}=t\big\}. \end{equation}
Thus it will suffice to show $\max_{\underline{x} \in \widetilde{\mathcal{X}}} \tilde{g}(\underline{x})$ has the bounds claimed. (We write $\underline{x}$, since we have now a subset of variables of the matrix $X$.)

Notice that 
if we have equality for $x_{ij}+x_{i'j}= 1$ for some $i,i',j$ all distinct, then $\sum_{i\ne j}x_{ij}\geq 1$, which yields a contradiction since $t<1$. Thus, it is equivalent to consider the maximisation problem over $\widetilde{\mathcal{X}}_1$, where we require strict inequality for the terms, i.e., $x_{ij}+x_{i'j}< 1$. Let
\begin{equation}
\widetilde{\mathcal{X}}_1=\big\{(x_{ij})_{i\neq j}
~\colon~  x_{ij}\geq 0 \; \forall i\neq j, \; \sum_{i\colon i\neq j}x_{ij} < 1 \; \forall j \; \quad\mbox{and}\quad \;
\label{eq.small_d.X1}
\sum_{i \ne j} x_{ij}=t\big\}. \end{equation}
It now suffices to show that $\max_{\underline{x} \in \widetilde{\mathcal{X}}_1} \tilde{g}(\underline{x})$ satisfies the claimed bounds.

We may remove one more variable. Note
from the constraints that we have \begin{align*}    
x_{21}&=t-\sum_{{i \ne j\atop (i,j)\ne (2,1)}}x_{ij}.\end{align*}
Let $\tilde{g}_2$ be $\tilde{g}$ making this substitution (i.e.,\ $\tilde{g}_2$ is a function on $k^2-k-1$ variables), and let 
\begin{eqnarray}
\nonumber 
\widetilde{\mathcal{X}}_2=\big\{(x_{ij})_{i\neq j,\; (i,j)\neq (2,1)}
& \colon &  x_{ij}\geq 0 \; \forall (i,j) \text{ with } i\neq j, (i,j)\neq (1,2), \;  \sum_{i\colon i \geq 3}x_{i1} < 1 \\
&& \sum_{i\colon i\neq j}x_{ij} < 1 \; \forall j\geq 2  \; \quad \mbox{and}\quad  \;
\label{eq.small_d.X2}
\sum_{{i \ne j\atop (i,j)\ne (2,1)}} x_{ij} \leq t\big\}. \end{eqnarray}
Thus, finally, it suffices to show that $\max_{\underline{x} \in \widetilde{\mathcal{X}}_2} \tilde{g}_2(\underline{x})$ satisfies the claimed bounds.

Recall that the maximum of a convex function is obtained on the boundary. Note also that we may apply this recursively, since if we set any inequality in~\eqref{eq.small_d.X2} to equality, then we may make a substitution to reduce the number of variables by $1$, and we obtain a maximisation problem for a convex function (on one fewer variables). Thus, the max of $\tilde{g}_2(\underline{x})$ over $\underline{x}\in \widetilde{\mathcal{X}}_2$ is obtained when at least $k^2-k-1$ of the inequalities in~\eqref{eq.small_d.X2} are equality. 

Observe that there are $k^2-k-1$ inequalities in~\eqref{eq.small_d.X2} (which are not strict inequalities), so the maximum is obtained when we set all but (at most) one of these to equality.

We consider two cases. The first case is that we have $x_{ij}=0$ for all $(i,j)$ with $i\neq j,\; (i,j)\neq (2,1)$. Notice that this fixes the value of \emph{all} variables, and that the remaining inequalities in~\eqref{eq.small_d.X2} are all satisfied.
The value of  $\tilde{g}_2(x)$ attained is that of $g(X)$ with $x_{21}=t$ (and  $x_{ij}=0$ for all $(i,j)$ with $i\neq j,\; (i,j)\neq (2,1)$, and $x_{ii}=\sum_{i' \colon i'\neq i} x_{i' i}$ for all $i$). Note that this yields the matrix $X^{(2,1)}(t)$, and, moreover, that
$g(X^{(2,1)}(t))= k(k\!-\!1) - 2t \big( k\!-\!t(k\!-\!1)\big)$.

The second case is that there exist $a,b$ (where $(a,b)\neq (2,1)$) such that 
\[ \sum_{{i \ne j\atop (i,j)\ne (2,1)}} x_{ij} = t \; \mbox{ and }\; \mbox{
$x_{ij}=0$ for all $(i,j)$ with $i\neq j,\; (i,j)\neq (2,1), (a,b)$}.\] 
Notice that this fixes the value of all $k^2-k-1$ variables: $x_{ab}=t$ and all remaining variables take the value $0$, and that this assignment satisfies the remaining inequalities in~\eqref{eq.small_d.X2}. The value of  $\tilde{g}_2(x)$, attained is that of $g(X)$ with $x_{ab}=t$ (and  $x_{ij}=0$ for all $(i,j)$ with $i\neq j,\; (i,j)\neq (a,b)$, and $x_{ii}=\sum_{i' : i'\neq i} x_{i' i}$ for all $i$). Note that this yields the matrix $X^{(a,b)}(t)$, and, moreover, that again $g(X^{(a,b)}(t))= k(k\!-\!1) - 2t \big( k\!-\!t(k\!-\!1)\big)$ for any such $(a,b)$.
This completes the proof. 
\end{proof}

\subsection{Proof for the general case $d>1/k$}\label{subsec:far_mod}

Here we prove Lemma~\ref{lem:opt_far_mod}, which will allow us to infer that partitions $\cA$ with high modularity scores must be at distance $d\leq 1/k$ from the planted partition.

\begin{proof}
Consider any partition $\cA$ with signature $X=(x_{ij})_{1\leq i \leq j \leq k}$ that is at distance more than $1/k$ from the planted partition $\cP$. Our goal is to show that $g(X) \le k(k-1)-2$.

Suppose that there exists a column $j \in [k]$ (say, $j=1$) in $X$ with at least two non-zero entries (say, $x_{11}>0$ and $x_{21}>0$). Consider a family of signatures $X(s)$ that is parametrised by variable $s$, as follows: for a given $s \in [-x_{11}, x_{21}]$, matrix $X(s)$ is exactly the same as $X$ but $x_{11}$ is replaced with $x_{11}+s$ and $x_{21}$ is replaced with $x_{21}-s$. Of course, $X = X(0)$.

Note that $g(X(s))$ is a quadratic function of $s$ with a positive coefficient in front of $s^2$. As a result, at least one of the following two properties holds:
(i) $g(X(s))$ increases as $s$ increases from $s=0$ to $s=x_{21}$ (which is equivalent to transferring a weight from $x_{21}$ to $x_{11}$); or
(ii) $g(X(s))$ increases as $s$ decreases from $s=0$ to $s=-x_{11}$ (which is equivalent to transferring a weight from $x_{11}$ to $x_{21}$). 
Note that it might be the case that $g(X(s))$ attains its local minimum at $s=0$ and both properties hold at the same time. 
Regardless, we may start from $s=0$ and either increase or decrease $s$ to gradually increase $g(X(s))$. While we do this, the distance to the planted partition $\cP$ does not need to behave monotonically. However, it is easy to see (but it is crucial for the argument) that the distance to $\cP$ is a continuous function of $s$. Once we are done with transferring the weight, we get an additional zero entry in our signature matrix, and we can move on to the next pair of non-zero entries, possibly in a different column $j$. 

We need to consider two cases now. Suppose first that during the above process the distance to $\cP$ is equal to $1/k$. We prematurely stop the process at the very first time this happens, and let $Y$ be the signature we have at that moment. Since we kept increasing the function $g$ along the way, it follows from Lemma~\ref{lem:optimisation} (applied with $t=1$) that 
$$
g(X) < g(Y) \le \max_{X\in \mathcal{X}_k(1)} g(X) = k(k-1)-2,
$$
which finishes the proof of the lemma in this case. 

Suppose now that the distance from $\cA$ to $\cP$ was always more than $1/k$, but we had to stop the above process of transferring weights at some point because each column had exactly one non-zero entry (of course these non-zero entries must be equal to one). This means that each part of $\cA$ is a union of some planted parts, and the distance between $\cA$ and $\cP$ is equal to $i/k$ for some integer $i \ge 2$. 

Now, take any part which is a union of $r \ge 2$ planted parts and split it into two parts, consisting of $r-1$ and a single planted part, respectively. After such partition refinement, the function $g$ increases. Indeed, before splitting the part with $r$ planted parts, the contribution of that part to the function $g$ is $r(k-r)$, but after splitting, the two resulting parts contribute $(r-1)(k-r+1)+(k-1)$, which is equal to $r(k-r) + 2(r-1) > r(k-r)$. We continue such refinements of partitions until the distance to $\cP$ is $1/k$. At that point, the partition $\cA$ with signature $Y$ has one part consisting of two planted parts, while the remaining parts consist of just one planted part. The conclusion is as before: 
$$
g(X) < g(Y) = \max_{X\in \mathcal{X}_k(1)} g(X) = k(k-1)-2.
$$
This finishes the proof of Lemma~\ref{lem:opt_far_mod}. 
\end{proof}

\section{Proof of Theorem~\ref{thm:OGP} : SBM has OGP}

\label{app:proof-OGP}

\begin{proof}[Proof of Theorem~\ref{thm:OGP}]\label{proof.OGP} We will prove Theorem~\ref{thm:OGP} using Theorem~\ref{thm:main_theorem}, our concentration result in  Lemma~\ref{lem:write_mod_as_signature}, and the optimisation result in Lemma~\ref{lem:opt_far_mod}.

First, we will set up the range for the distances. Recall that $h(d)$ is a quadratic function of $d$, with minimum at $\frac{1}{2(k-1)}$. See also Figure~\ref{fig:functions_h_new}. Take any $\nu\in \left(\frac{1}{2(k-1)}, \frac{1}{k}\right)$, choose 
$\nu'= \frac{2}{3}\cdot \nu + \frac{1}{3}\cdot \frac{1}{k}$, and let $\nu''\in \left[0,\frac{1}{2(k-1)}\right)$ be the unique number such that $h(\nu'')= h(\nu')$. Let $n_0$ be the smallest integer such that $1/\sqrt{n_0}< \nu'-\nu< \frac{1}{2(k-1)}-\nu''$. 

Next, we define $\mu=\frac {a-b}{a+(k-1)b}\big(h(0)-h(\nu')\big)$ and 
note that $\frac {a-b}{a+(k-1)b}\,h(0)-\mu =  \frac {a-b}{a+(k-1)b}\,h(\nu')$. Choose a positive $\varepsilon<\frac{a-b}{a+(k-1)b}\,(h(\nu')- h(\nu))$. Then, by Theorem~\ref{thm:main_theorem}, we obtain that parts (i) and (ii) below hold with probability at least $1-\varepsilon$, and for $n>n_0$ large enough:
\begin{itemize}
    \item[(i)] For all partitions $\cA$ at distance $d\in [0,\nu'-\nu)$ and at distance $d\in (1/k-(\nu'-\nu),1/k]$, it holds that 
    \[q_{\cA}(G)>\frac {a-b}{a+(k-1)b}\,h(d)-\varepsilon > \frac {a-b}{a+(k-1)b}\,h(\nu')=\frac {a-b}{a+(k-1)b}\,h(0)-\mu.\]    
    \item[(ii)] For any partition $\cA$ at distance $d\in \left(\frac{k}{2(k-1)},\nu\right)\subset\left[0,\frac{1}{k}\right]$, 
    \[q_{\cA}(G)< \frac {a-b}{a+(k-1)b}\,h(d)+\varepsilon< \frac {a-b}{a+(k-1)b}\,h(\nu')=\frac {a-b}{a+(k-1)b}\,h(0)-\mu.\]    
 
\end{itemize}
Together, (i) and (ii) establish Theorem~\ref{thm:OGP}.
\end{proof}

\section{Proof of Theorem~\ref{thm.close_is_close}}\label{app:close_is_close}

We prove the following more detailed proposition, which implies Theorem~\ref{thm.close_is_close}.

\begin{proposition}    
[\bf Partitions with near-optimal modularity are close to the planted]\label{prop.detailed_close_is_close} 

Fix $a>b>0$ and integer $k\ge 2$. Let $p=p(n)=\omega\; a / n$ and $q=q(n)=\omega \; b/n$. For any $\eps>0$ and $\delta<\frac{a-b}{a+(k-1)b} \cdot \frac{2}{k^2}$, there exists $c=c(\eps)$ such that, if $\omega>c$, then the following holds with probability at least $1-\eps$ provided $n$ is large enough.

Let $G \in \mathcal{G}(n,k,p,q)$ with the planted partition $\cP$. 
Then, $q_\cA(G)>\q(G) - \delta $ for some $k$-part partition $\cA$ implies $d(\cA, \cP) < \delta' + \varepsilon$,
where $\delta' =\delta'(\delta) \in \big(0,\frac{1}{k(k-1)}\big)$.

In fact, $\delta'=\delta'(\delta)$ is the unique value in the interval $\big(0,\frac{1}{k(k-1)}\big)$ that satisfies 
$$
\frac {a-b}{a+(k-1)b} h(\delta') =  \frac {a-b}{a+(k-1)b} h(0)-\delta,
$$
which is equivalent to 
$$
\delta = \frac {a-b}{a+(k-1)b} \Big( 2 \delta' ( 1 - \delta'(k-1)) \Big).
$$
\end{proposition}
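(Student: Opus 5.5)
Write $\lambda=\frac{a-b}{a+(k-1)b}$. We work on the intersection of the high-probability events from Theorem~\ref{thm:modularity_function}, Lemma~\ref{lem:write_mod_as_signature} and Theorem~\ref{thm:main_theorem}, each applied with a small tolerance $\eps_0=\eps_0(\eps,\delta)$ to be fixed at the end. Choosing $c$ as the maximum of the corresponding constants and using a union bound, this event has probability at least $1-\eps$. On it we have
\[
\q(G)\ge \lambda h(0)-\eps_0 ,
\]
so any $k$-part $\cA$ with $q_\cA(G)>\q(G)-\delta$ satisfies
\[
q_\cA(G)>\lambda h(0)-\delta-\eps_0 .
\]

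The plan is to rule out every distance $d(\cA,\cP)\ge\delta'+\eps$, split into two regimes.

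\textbf{Far regime, $d(\cA,\cP)>1/k$.} Let $X$ be the signature of $\cA$, with rows permuted so that the identity is the optimal alignment. Then $X\in\mathcal{X}_k(t)$ for some $t>1$, and Lemma~\ref{lem:opt_far_mod} gives $g(X)<k(k-1)-2$. I want a uniform gap here, $\sup_{t>1}\max_{\mathcal{X}_k(t)}g\le k(k-1)-2$. This follows by compactness and continuity: the set of signatures with distance $\ge 1/k$ is closed, and the proof of Lemma~\ref{lem:opt_far_mod} shows the bound $k(k-1)-2$ holds there. Lemma~\ref{lem:write_mod_as_signature} then gives
\[
q_\cA(G)\le \lambda\Bigl(h(0)-\tfrac{2}{k^2}\Bigr)+\eps_0 ,
\]
since $\frac{g}{k^2}\le 1-\frac1k-\frac{2}{k^2}=h(1/k)$. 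Because $\delta<\frac{2\lambda}{k^2}$, choosing $\eps_0<\frac12\bigl(\frac{2\lambda}{k^2}-\delta\bigr)$ contradicts the lower bound on $q_\cA(G)$.

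\textbf{Near regime, $d(\cA,\cP)=d\in[\delta'+\eps,1/k]$.} Theorem~\ref{thm:main_theorem} gives $q_\cA(G)\le \lambda h(d)+\eps_0$. Check that on $[\delta',1/k]$ the function $h$ stays at or below $h(\delta')$. The function $h$ is a convex quadratic with minimum at $\frac{1}{2(k-1)}$. We have $h(\delta')=h(0)-\delta/\lambda$, which lies strictly above $h(1/k)=h(0)-\frac{2}{k^2}$. The mirror point of $\delta'$ is $\frac{1}{k-1}-\delta'>\frac1k$, and this is exactly what requires $\delta'<\frac{1}{k(k-1)}$, which is where that upper bound on $\delta'$ comes from. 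So on $[\delta'+\eps,1/k]$ we get
\[
h(d)\le \max\bigl(h(\delta'+\eps),\,h(1/k)\bigr)\le h(\delta')-\gamma
\]
for some $\gamma=\gamma(\eps,\delta)>0$, using strict monotonicity of $h$ near $\delta'$. Taking $\eps_0<\lambda\gamma/2$ yields $q_\cA(G)<\lambda h(0)-\delta-\eps_0$, again a contradiction.

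\textbf{Defining $\delta'$.} Solving $\lambda h(\delta')=\lambda h(0)-\delta$ gives the stated identity $\delta=2\lambda\delta'(1-\delta'(k-1))$. The right-hand side is increasing on $[0,\frac{1}{2(k-1)}]$ and equals $\frac{2\lambda}{k^2}$ at $\delta'=\frac{1}{k(k-1)}$. So for $\delta<\frac{2\lambda}{k^2}$ there is a unique solution in $(0,\frac{1}{k(k-1)})$.

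The main obstacle is the far regime. Theorem~\ref{thm:main_theorem} only covers $d\le 1/k$, so we need the strict but uniform gap away from $k(k-1)-2$, together with the slack between $\delta$ and $\frac{2\lambda}{k^2}$ absorbing the concentration error. Non-divisibility of $n$ by $k$ is handled as in the proof of Lemma~\ref{lem:write_mod_as_signature}.
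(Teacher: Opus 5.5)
Your proof is correct and takes the same route as the paper: you rule out distance $>1/k$ by combining Lemma~\ref{lem:write_mod_as_signature} and Lemma~\ref{lem:opt_far_mod} with the slack $\delta<2\lambda/k^2$, and you handle distances in $[0,1/k]$ with Theorem~\ref{thm:main_theorem}. Your version is more careful than the paper's in two places: you track the factor $\lambda$ explicitly, and you write out the convexity argument for $h$ on $[\delta'+\eps,1/k]$ that the paper leaves as ``the result now follows by applying Theorem~\ref{thm:main_theorem}''.
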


The intuition for the proof is that {\em if} the modularity score of a partition is sufficiently high, i.e., at least $\q(G)-\delta$, {\em then}, by our concentration results, we may translate this to a lower bound on $g(X)$ for $X$ the signature of $\cA$. Since Lemma~\ref{lem:opt_far_mod} upper bounds the possible values of $g$ for signatures far from the planted partition, we may then infer that $X$ must be within distance $d\leq 1/k$ of the planted partition. Then, by our detailed results in Theorem~\ref{thm:main_theorem}, we may conclude that the distance of $X$ to the planted partition is within $\delta'$ since $g(X)$ attains such a high value.
We now provide the details.

\begin{proof} Let $s_\delta=\delta \frac{a+(k-1)b}{a-b}$, and pick $\eps < \frac{2}{k^2}-\delta$.
We first note that, by Lemma~\ref{lem:write_mod_as_signature}, we may take $\omega$ large enough so that with probability at least $1-\eps/4k^2$ for all  partitions $\cA$, we have that for the corresponding signature $X$,
\[ |q_\cA(G) - \frac{a-b}{a+(k-1)b}\frac{g(X)}{k^2} |\leq \frac{\eps}{4k^2}.\]
Recall also that with probability at least $1-\eps/4k^2$, \[ |\q(G) - \frac{a-b}{a+(k-1)b}\big(1-\frac{1}{k}\big)| \leq \eps /4k^2, \]
and thus, with probability at least $1-\eps/4$, for $q_\cA(G)>\q(G)-\delta$,
\[ g(X)> k(k-1) -k^2\delta -  \eps/2.\]
Hence, since $\delta \leq 2/k^2$, with probability at least $1-\eps/4$, by our choice of $\eps$,
\[ g(X)> k^2 - k -2.\]
Therefore, by Lemma~\ref{lem:opt_far_mod}, 
we may conclude that with probability at least $1-\eps/4$, we have $d(\cA, \cP)<1/k$ for the partition $\cA$ with signature $X$. The result now follows by applying Theorem~\ref{thm:main_theorem}. 
\end{proof}

\section{The case of balanced partitions}
\label{app:balanced}
Theorem~\ref{thm:main_theorem} shows that the OGP holds when optimising over all $k$-part partitions. If, instead of optimising over \emph{all} $k$-part partitions at a certain distance from the planted partition, we optimise only over \emph{balanced} such partitions, then the behaviour is markedly different. See the discussion in Section~\ref{sec.conj}. We now prove Proposition~\ref{prop.balanced} which we restate below.
\propbalanced*

For the proof we recall the notion of circulation on directed graphs and the following classic result on cycle decompositions of such graphs -- see for example \cite[Theorem 1]{gauthier2014decomposition} or \cite[Theorem 11.1 and (11.3)]{Schr03a}. %
Let $G=(V(G),\vec{E}(G))$ be a directed graph. A function $b:\vec{E}(G) \rightarrow \mathbb{R}_{\geq 0}$ is a \emph{circulation} on $G$ if for every vertex $v\in V(G)$ it holds that
\[ 
\sum_{\substack{ 
u\; : \; \vec{vu}\in \vec{E}(G)}} b_{vu}
\;=\;
\sum_{\substack{ 
u \; : \; \vec{uv}\in \vec{E}(G)}} b_{uv},
\]
i.e., the total outflow from $v$ equals the total inflow into $v$. Circulations have the following structure related to directed cycles:
\begin{lemma}[Cycle decomposition of a nonnegative circulation]
\label{lem:cycle_decomposition}
Suppose $B=(b_{ij})_{i,j\in[k]}$ is a circulation and satisfies $b_{ij}\ge 0$ for $i\neq j$ and $b_{ii}=0$.
Then, for some $m \leq |(i,j) : b_{ij}>0|$,
 there exist \emph{simple directed cycles} 
$C_1,\dots,C_m$ in $G$ and weights $w_1,\dots,w_m>0$ such that, for every $i\neq j$,
\[
b_{ij} \;=\; \sum_{\ell=1}^m w_\ell \,\mathbf{1}\{ \vec{ij}\in C_\ell\}.
\]
\end{lemma}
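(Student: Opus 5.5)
The plan is to argue by induction on the number $N(B)=|\{(i,j): b_{ij}>0\}|$ of strictly positive entries of $B$, peeling off one simple directed cycle at a time. We view $B$ as a weighted directed graph on vertex set $[k]$ with an arc $\vec{ij}$ whenever $b_{ij}>0$; since $b_{ii}=0$ there are no loops. In the base case $N(B)=0$, $B$ is identically zero and we take $m=0$ (the empty decomposition), which trivially satisfies $m\le N(B)$.

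For the inductive step, suppose $N(B)\ge 1$ and choose an arc $\vec{i_0 i_1}$ with $b_{i_0i_1}>0$. The key observation is that in a nonnegative circulation every vertex with positive inflow also has positive outflow, by the conservation identity. Since $i_1$ has positive inflow, there is $i_2$ with $b_{i_1i_2}>0$. Iterating this produces an infinite walk $i_0,i_1,i_2,\dots$ along positive arcs in the finite vertex set $[k]$, so some vertex repeats. Take the first repetition $i_s=i_r$ with $r<s$; then $i_r\to i_{r+1}\to\cdots\to i_s=i_r$ is a simple directed cycle $C$. Its length is at least $2$ because there are no loops, and every arc of $C$ has positive weight.

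Now set $w=\min_{\vec{ij}\in C} b_{ij}>0$ and define $B'=B-w\,\mathbf 1_C$, meaning we subtract $w$ on the arcs of $C$. Then $B'$ is again a circulation: at each vertex of $C$ exactly one in-arc and one out-arc lose $w$, and other vertices are untouched. It is nonnegative by the choice of $w$, still has zero diagonal, and at least one arc of $C$ (a minimiser) drops to $0$ while no new positive entries appear. Hence $N(B')\le N(B)-1$. By induction, $B'=\sum_{\ell=1}^{m'} w_\ell\mathbf 1_{C_\ell}$ with $m'\le N(B')$. Adding the cycle $(C,w)$ gives a decomposition of $B$ into $m=m'+1\le N(B)$ simple cycles with positive weights, which is the claimed identity $b_{ij}=\sum_\ell w_\ell\mathbf 1\{\vec{ij}\in C_\ell\}$.

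There is no real obstacle here. The only point needing care is the existence of an outgoing positive arc at every vertex reached, which is exactly where the circulation hypothesis (inflow equals outflow) and nonnegativity are used. One must also check that the subtraction step strictly decreases $N$, which is what yields the bound $m\le|\{(i,j):b_{ij}>0\}|$.
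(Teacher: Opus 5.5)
Your proof is correct. The paper gives no proof of its own for this lemma: it cites it as the classical circulation (flow) decomposition theorem from Gauthier et al.\ and Schrijver (Theorem~11.1 and (11.3)), and your induction on the support size is the standard argument behind that result, including the observation that each peel removes at least one arc and creates none, which gives $m\le|\{(i,j):b_{ij}>0\}|$.
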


\begin{proof}[Proof of Proposition~\ref{prop.balanced}]
Fix $0<t_1 < t_2 < k-1$, and let $X^*\in \cX'_k(t_2)$ be such that $g(X^*)=g^{\rm bal}(t_2)$. Note that it suffices to construct $\widetilde{X}\in \cX'_k(t_1)$ such that $g(\widetilde{X})>g(X^*)$ since then we would have
\begin{equation}\label{eq.whyXtilde} g^{\rm bal}(t_1) \geq g(\widetilde{X})>g(X^*) = g^{\rm bal}(t_2).\end{equation}
The intuition for the proof is that we can regard the non-diagonal entries of $X^*$ as a directed graph; and they form a non-negative circulation. We can then apply Lemma~\ref{lem:cycle_decomposition} to get a directed cycle decomposition $\{(C_\ell,w_\ell)\}_\ell$ such that these sum to the off-diagonal entries of $X^*$. For any directed cycle and $\eps \leq w_\ell$, we may modify an $X\in \cX'(t)$ by deleting an $\eps$ weight from any $x_{ij}$ with $\vec{ij}$ in the cycle, adding $\eps$ weight to $x_{ii}$. We call this a $(C, \eps)$-transfer move, and we will see that this preserves the properties in~\eqref{eq.def.X.orig}, and will thus yield an element of $\cX(t')$ for $t'<t$. We will show that the $g$ value has strictly decreased by this operation. Starting from $X^*$, we use the cycle decomposition to perform a sequence of transfer moves which allows us to construct the required $\widetilde{X}$ in~\eqref{eq.whyXtilde}. Let us now give the details.

We begin by defining the $(C,\eps)$-transfer move on matrix $X$. Given a directed cycle $C$ and $\eps>0$ define $X^{(\eps)}=T_{C,\eps}(X)$ by setting 
\begin{align*}
x^{(\varepsilon)}_{ii} &:= x_{ii}+\varepsilon, \qquad \text{ for all }i\in V(C),\qquad \qquad x^{(\varepsilon)}_{ij} := x_{ij}-\varepsilon, \qquad \text{ for all }\vec{ij}\in \vec{E}(C),
\end{align*}
and leaving all other entries unchanged.

\begin{claim}\label{clm.ginc_stillvalid}
    We \emph{claim} that if $X\in \cX'_k(t)$, $C$ is a directed cycle with $V(C)\subseteq [k]$ and $0< \eps \leq \min_{\vec{ij}\in \vec{E}(C)} x_{ij}$, then, for $X^{(\eps)}=T_{C,\eps}(X)$,
    \begin{itemize}
        \item $g(X^{(\eps)})>g(X)$ and
        \item $X^{(\eps)}\in \cX'_t(t')$ where $t'=t-|C|\eps$.
    \end{itemize}
\end{claim}
\begin{claimproof}
We first prove that $g$ is strictly increased by the transfer move, i.e., $g(X^{(\eps)})>g(X)$. Notice that we may rewrite $g(X)$ using the Frobenius norm. Fix a row $i$, then, since $\sum_j x_{ij}=1$,
\[
\sum_{1\le j<j'\le k} (x_{ij}-x_{ij'})^2
=\frac12\sum_{j,j'}(x_{ij}-x_{ij'})^2
= k\sum_{j=1}^k x_{ij}^2 -\Big(\sum_{j=1}^k x_{ij}\Big)^2
= k\sum_{j=1}^k x_{ij}^2 - 1,
\]
so that
\begin{equation}
\label{eq:g_frob_bal}
g(X)
= k\sum_{i,j} x_{ij}^2 -k 
\end{equation}
Hence, showing $g(X^{(\eps)})>g(X)$ is equivalent to showing $\sum_{i,j}(x^{(\eps)}_{ij})^2> \sum_{i,j}x_{ij}^2$.
Note that only $2|C|$ entries change in the transfer move. Therefore,
\begin{align}
\nonumber
\sum_{i,j} \big(x^{(\varepsilon)}_{ij}\big)^2 - \sum_{i,j} x_{ij}^2 
&= \sum_{i \in V(C)} \Big((x_{ii}+\varepsilon)^2-x_{ii}^2\Big)
   +\sum_{\vec{ij}\in \vec{E}(C)} \Big((x_{ij}-\varepsilon)^2-x_{ij}^2\Big)\\
&= 2\varepsilon\Big(\sum_{i \in V(C)} x_{ii} - \sum_{\vec{ij}\in \vec{E}(C)} x_{ij}\Big) + 2|C|\varepsilon^2.\label{eq:diff_after_transfer}
\end{align}
To see that~\eqref{eq:diff_after_transfer} is positive, we argue as follows. Let $\pi\in S_k$ be the permutation that maps $j\mapsto i$ for directed edges $\vec{ij}$ on the cycle, and fixes all other vertices.
Then $\sum_j x_{\pi(j),j}=\sum_{\vec{ij}\in \vec{E}(C)}  x_{ij} + \sum_{i\notin V(C)}x_{ii}$.
However, by the alignment constraint of~\eqref{eq.def.X.orig}, which states that $\sum_{i=1}^k x_{ii} \geq \sum_{i=1}^k x_{\sigma(i)i}$ for all permutations $\sigma$, we have that
$\sum_{i=1}^k x_{ii} \geq \sum_{i=1}^k x_{\pi(i)i}$, so that
\[
\sum_{i\in V(C)}^k x_{ii} = \sum_{i=1}^k x_{ii} - \sum_{i\notin V(C)}^k x_{ii}  \;\ge\; \sum_{j=1}^k x_{\pi(j),j} - \sum_{i\notin V(C)}^k x_{ii} = \sum_{\vec{ij}\in \vec{E}(C)}  x_{ij}.
\]
Thus, \eqref{eq:diff_after_transfer} is at least $2|C|\eps^2>0$, so we have $g\big(X^{(\varepsilon)}\big) > g(X)$, as required.

To see the second part of the claim, note first that $x^{(\varepsilon)}_{ij}\ge 0$ for all $i,j\in [k]$ by construction.
Also, row sums remain $1$ since, for rows $i$ with $i\in V(C)$, we decrease one entry by $\eps$ and increase the diagonal
entry by the same amount, and other row sums do not change. The same argument applies to column sums, 
and thus $X^{(\varepsilon)}$ is doubly stochastic. Moreover, 
\[
\sum_{i\neq j} x^{(\varepsilon)}_{ij} \;=\; \sum_{i\neq j} x_{ij} - |C|\varepsilon \;=\; t-|C|\varepsilon=t'.
\]
Lastly, for any permutation $\sigma$, {\Fc we note that by the definition of the transfer move, for $i$ such that $\sigma(i)\neq i$, it holds that $x_{ii}^{(\eps)}\geq x_{ii}$ and $x^{(\eps)}_{\sigma(i)i}\leq x_{\sigma(i)i}$. Hence we have
\[\sum_i x^{(\eps)}_{ii}-\sum_{i}x^{(\eps)}_{\sigma(i)i} = \sum_{i \; : \; \sigma(i)\neq i} ( x_{ii}^{(\eps)} - x^{(\eps)}_{\sigma(i)i} ).\]
 }
Thus since $\sum_i x_{ii}\ge \sum_i x_{\sigma(i),i}$ held for $X$, it still holds for $X^{(\varepsilon)}$. We conclude that $X^{(\eps)}$ satisfies the constraints~\eqref{eq.def.X.orig} and $X^{(\eps)}\in \cX'(t')$ which completes the proof of the claim.
\end{claimproof}

We now return to constructing $\widetilde{X}$ to establish~\eqref{eq.whyXtilde}. Recall that $X^*\in\cX_k(t_2)$ is such that $g(X^*)=g^{\rm bal}(t_2)$.
Define $B=(b_{ij})$ by $b_{ij}=x^*_{ij}$ for $i\neq j$ and $b_{ii}=0$; and note that $B$ is a circulation on $[k]$. To see this we need to check that the outflow from any vertex $i\in[k]$ equals the total inflow into $i$. Since the $i$th row and column both sum to $1$ in $X^*$,
\[
\sum_{j\; : \; i\neq j} b_{ij}
\;=\;
\sum_{j\; : \; i\neq j} x^*_{ij}
\;=\;
1-x^*_{ii}
\;=\;
\sum_{j\; :i \neq j} x^*_{ji}
\;=\;
\sum_{j\; :i \neq j} b_{ji},
\]
as required, confirming that $B$ is a circulation. 
Therefore, by Lemma~\ref{lem:cycle_decomposition}, there exist simple directed cycles $C_1,\dots,C_m$ in $G$
and weights $w_1,\dots,w_m>0$ such that for every $i\neq j$,
\[
b_{ij} \;=\; \sum_{\ell=1}^m w_\ell\,\mathbf{1}\{\vec{ij}\in C_\ell\}.
\]
In particular, the total off-diagonal mass decomposes as
\begin{equation}
\label{eq:cycle_capacity_sum}
t_2 \;=\; \sum_{i\neq j} x^*_{ij}
\;=\; \sum_{i\neq j} b_{ij}
\;=\; \sum_{\ell=1}^m w_\ell \, |C_\ell|.
\end{equation}
Hence, we may choose $L$ such that $\sum_{\ell< L} w_\ell \, |C_\ell| < t_2 - t_1 \le \sum_{\ell\le L}  w_\ell \, | C_\ell|$.
Define $\varepsilon_\ell:=w_\ell$ for $\ell<L$, $\varepsilon_\ell:=0$ for $\ell>L$, and
\[
\varepsilon_L \ :=\ \frac{t_2 - t_1 - \sum_{\ell< L} w_\ell \, |C_\ell| }{r_L},
\]
so that $0<\varepsilon_L\le w_L$ and $\sum_{\ell=1}^L  \varepsilon_\ell \, |C_\ell| =t_2 - t_1$.
Now define a sequence of matrices by $X^{(0)}:=X^*$ and, for $\ell=1,\dots,L$,
\[
X^{(\ell)}\ :=\ T_{C_\ell,\varepsilon_\ell}\big(X^{(\ell-1)}\big) \quad\mbox{and}\quad \widetilde{X}:=X^{(L)}
\]
By Claim~\ref{clm.ginc_stillvalid}, each transfer move preserves nonnegativity, double stochasticity, the alignment constraint,
and decreases the off-diagonal mass by exactly $|C_\ell|\, \varepsilon_\ell$. Hence for $\widetilde{X}=X^{(L)}$ the off-diagonal mass is
\[
\sum_{i\neq j} x^{(L)}_{ij}
=
\sum_{i\neq j} x^{(0)}_{ij} - \sum_{\ell=1}^L |C_\ell|\, \varepsilon_\ell
=
t_2-(t_2-1)
=
t_1,
\]
and therefore $\widetilde X\in \cX_k'(t_1)$.
Moreover, again by Claim~\ref{clm.ginc_stillvalid}, each transfer move strictly increases $g$, so
\[
g(\widetilde X)
=
g\big(X^{(L)}\big)
>
g\big(X^{(0)}\big)
=
g(X^*),
\]
which establishes~\eqref{eq.whyXtilde}, and we are done.
\end{proof}

\end{document}